\numberwithin{equation}{section}
\title[Pseudo-differential Operators on Fractals]{Pseudo-differential Operators on Fractals and Other 
Metric Measure Spaces}
\author[M. Ionescu and L. G. Rogers and R. S. Strichartz]{Marius
  Ionescu and Luke G. Rogers and Robert S. Strichartz} 
\address[mionescu@colgate.edu]{{\sc Marius Ionescu}: Department of Mathematics, Colgate
  University, NY, 13346, USA}
\address[rogers@math.uconn.edu]{{\sc Luke G. Rogers}: Department of Mathematics, University
  of Connecticut, Storrs, CT,06269, USA}
\address[str@math.cornell.edu]{{\sc Robert S. Strichartz}: Department of Mathematics, Cornell University, Ithaca, NY, 14853,
USA}
\thanks{The work of the first author was partially supported by a grant from
the Simons Foundation (\#209277 to Marius Ionescu).}
\thanks{The work of the third author was partially supported by the National
Science Foundation, grant DMS-0652440.}
\subjclass[2010]{Primary 35S05, 35P99, 46F12; Secondary 28C15, 58C40}
\keywords{pseudo-differential operators, fractals,
  self-similar,elliptic, hypoelliptic, quasi-elliptic, Laplacian, metric measure space, subGaussian heat kernel estimates, wavefront set, microlocal analysis}
  \theoremstyle{remark}
  \newtheorem*{acknowledgement*}{\protect\acknowledgementname}
\theoremstyle{plain}
\newtheorem{thm}{\protect\theoremname}[section]
  \theoremstyle{definition}
  \newtheorem{defn}[thm]{\protect\definitionname}
  \theoremstyle{remark}
  \newtheorem{rem}[thm]{\protect\remarkname}
  \theoremstyle{plain}
  \newtheorem{prop}[thm]{\protect\propositionname}
  \theoremstyle{plain}
  \newtheorem{lem}[thm]{\protect\lemmaname}
  \theoremstyle{plain}
  \newtheorem{cor}[thm]{\protect\corollaryname}
  \theoremstyle{definition}
  \newtheorem{example}[thm]{\protect\examplename}
  \providecommand{\acknowledgementname}{Acknowledgement}
  \providecommand{\corollaryname}{Corollary}
  \providecommand{\definitionname}{Definition}
  \providecommand{\examplename}{Example}
  \providecommand{\lemmaname}{Lemma}
  \providecommand{\propositionname}{Proposition}
  \providecommand{\remarkname}{Remark}
\providecommand{\theoremname}{Theorem}
\DeclareMathOperator{\singsupp}{sing~supp}
\begin{document}
\maketitle


%
%

\begin{abstract}
We define and study pseudo-differential operators on a class of fractals
that include the post-critically finite self-similar sets and Sierpinski
carpets. Using the sub-Gaussian estimates of the heat operator we
prove that our operators have kernels that decay and, in the constant
coefficient case, are smooth off the diagonal. Our analysis can be
extended to products of fractals. While our results are applicable
to a larger class of metric measure spaces with  Laplacian, we use
them to study elliptic, hypoelliptic, and quasi-elliptic operators
on p.c.f. fractals, answering a few open questions posed in a series
of recent papers. We extend our class of operators to include the
so called H\"ormander hypoelliptic operators and we initiate the
study of wavefront sets and microlocal analysis on p.c.f. fractals.

\end{abstract}

%
%

\section{Introduction}

In this paper we define and study pseudo-differential operators on
metric measure spaces endowed with a non-positive self-adjoint Laplacian
such that the heat operator satisfies sub-Gaussian estimates.
The motivating examples for our work are metric measure spaces
constructed from p.c.f. fractals
(\cite{Kig_CUP01,Str:TAM04,Str_CJM98}). However, our results capture
the known facts from classical harmonic analysis on $\mathbf{R}^n$ and
Riemannian manifolds and can be
used to define pseudo-differential operators on other metric measure
spaces. For example, the operators defined in \cite{Heb_StudMath89}  fall
under the hypothesis of our main theorems.

Analysis on fractals from either the probabilistic or the analytic
viewpoint has been the focus of intense study recently (see, for example,
\cite{HaKu_PTRD03,BaPe_PTRF88,Kig_CUP01,Str_Prin06} and references
therein). Several recent papers have studied properties of spectral operators
on fractals. For example, \cite{Str:MRL05} shows some new convergence
properties of Fourier series on fractals with spectral gaps and establishes a  Littlewood-Paley inequality for such fractals.
In \cite{IoPeRoHuSt_08} a kernel formula for the resolvent of
the Laplacian on any p.c.f. fractal is given; this
generalizes Kigami's result for the Green function on p.c.f. fractals
(\cite{Kig_CUP01}). These results have been extended to infinite
blowups of fractals in \cite{Rog_08}. Numerical results suggest
that this result might hold for other spectral operators on fractals
\cite{allan2009spectral,constantin2009analysis}. The main result
of \cite{IR_2010} says that if a spectral operator on a p.c.f. fractal
is given by integration with respect to a kernel that is smooth and
has a specific type of decay off the diagonal, then it is a Calder\'on-Zygmund
operator in the sense of \cite[Section I.6.5]{Ste93}. In particular,
the authors show that the Riesz and Bessel potentials and, more generally,
the Laplace type operators are Calder\'on-Zygmund operators. On the
other hand, in \cite{DuOuSi_JFA02,sikora-2008}, the authors show
that spectral multiplier operators on metric measure spaces and products
of such spaces are bounded on $L^{q}$ as long as the heat operator
satisfies specific estimates. The Laplacian defined on some p.c.f.
fractals and some highly symmetric Sierpinski gaskets satisfies these assumptions.

We begin by presenting the standing hypothesis of our results in
Section \ref{sec:background}. We also briefly review the definition
and main properties of p.c.f. fractals and the Laplacians defined on
them.

The pseudo-differential operators with constant coefficients
that are studied in Section \ref{sec:Symbols-and-pseudo-differential}
generalize the class of spectral multipliers studied in the papers
mentioned above. We prove that these operators satisfy the symbolic
calculus and that they are given by integration with respect to kernels
that are smooth and decay off the diagonal, extending some of the
results of \cite{IR_2010}. Moreover, the class of pseudo-differential
operators of order $0$ on p.c.f. fractals are Calder\'on-Zygmund
operators and thus extend to bounded operators on $L^{q}$,
for all $1<q<\infty$. In this context we therefore recover the results of \cite{DuOuSi_JFA02,sikora-2008}.
We extend our analysis to products of such spaces in Section \ref{sec:products}.

We define Sobolev spaces on spaces built out of p.c.f. fractals in Section \ref{sec:Sobolev-Spaces}
and prove that pseudo-differential operators with constant coefficients
are bounded on them. We study elliptic and hypoelliptic operators on fractals
in Section \ref{sec:Elliptic-and-Hypoelliptic}. Namely, we prove
that a pseudo-differential operator satisfies the pseudo-local properties
and that an elliptic operator is hypoelliptic.  This gives positive answers to some open questions posed in \cite{Str_Prin06,BoStr_IUMJ07,RogStr-2009_distr}.
An interesting class of operators that can be defined on fractals
with spectral gaps are the so called quasielliptic operators (\cite{BoStr_IUMJ07,sikora-2008}).
We show that every quasielliptic operator is equal to an elliptic
pseudo-differential operator, though there are quasielliptic
differential operators which are not elliptic as differential
operators. 

In Section \ref{sec:rho_Hormander} we extend the class of pseudo-differential
operators to include operators for which the derivatives of the symbols
have a slower rate of decay. As an application we show that the H\"ormander
type hypoelliptic operators are hypoelliptic. This extends one
side of the classical result (\cite[Chapter III, Theorem 2.1]{Tay_PSDO81}).
The converse is false in general, as is exemplified by the quasielliptic
operators (see Subsection \ref{sub:Quasielliptic-operators}).

Section \ref{sec:wavefront} introduces the wavefront set and microlocal
analysis on products of compact spaces built out of fractals. We show
that pseudo-differential operators may decrease the wavefront set
and that elliptic operators preserve the wavefront set, extending
results from classical harmonic analysis (see, for example,
\cite{sogge1993fourier})\textbf{.} We also describe the wavefront set
for a few specific examples. 

In the last section, we study properties of pseudo-differential
operators with variable coefficients on compact fractafolds. The main
results of the sections are the continuity off the diagonal of the kernels of such
operators  and the $L^{q}$-boundedness of these operators.
These results cannot be obtained using the methods of \cite{DuOuSi_JFA02}
and \cite{sikora-2008}. We also conjecture that the pseudo-differential
operators with variable coefficients of order $0$ are Calder\'on-Zygmund
operators.

Some of the results proved here are extensions of the corresponding
results from classical harmonic analysis (see, for example \cite{Tay_PSDO81,Ste_PMS30_70,Ste_PMS43_93})
However the proofs of our results are very different. The main reason for this difference
is that the product of smooth functions is, in general, no longer
in the domain of the Laplacian \cite{BassStrTep_JFA99}. Therefore
techniques that are essential in real analysis like multiplication
with a smooth bump are not available to us. We frequently use the
Borel type theorem proved in \cite{RoStrTe_08} to decompose a smooth
function in a sum of smooth functions.
\begin{acknowledgement*}
The authors would like to thank Camil Muscalu and Alexander Teplyaev
for many useful conversations during the preparation of the
manuscript.
They also thank the referee for various suggestions, including
pointing out the relevance of \cite{Heb_StudMath89} to this topic. The
authors express their thanks to Yin-Tat Lee for pointing out an error
in an earlier draft of this article.
\end{acknowledgement*}

\section{\label{sec:background}Background}

\global\long\def\dom{\operatorname{dom}}
\global\long\def\E{\mathcal{E}}
 For the main results of Sections \ref{sec:Symbols-and-pseudo-differential}
and \ref{sec:products} we need a metric space $(X,R)$ with a Borel
measure $\mu$ and a negative self-adjoint Laplacian $\Delta$. We
assume that $X$ satisfies the doubling condition, that is, there
is a constant $C>0$ such that
\begin{equation}
  \label{eq:double}
\mu(B(x,2r))\le C\mu(B(x,r))\;\text{for all}\; x\in X\text{ and }r>0.  
\end{equation}

We also assume that the heat operator $e^{t\Delta}$ has a positive
kernel $h_{t}(x,y)$ that satisfies the following sub-Gaussian upper
estimate
\begin{equation}
h_{t}(x,y)\le c_{1}t^{-\beta}\exp\left(-c_{2}\left(\frac{R(x,y)^{d+1}}{t}\right)^{\gamma}\right),\label{eq:heat_estimates}
\end{equation}
where $c_{1},c_{2}>0$ are constants independent of $t,x$ and $y$.  In this expression both
$d$ and $\gamma$ are constants that depend on $X$ and $\beta=d/(d+1)$.
Moreover we assume that $h_{z}(x,y)$ is an holomorphic function on
$\{\operatorname{Re}z>0\}$.

Examples of spaces that satisfy the above hypothesis include the so
called p.c.f.~fractals and the highly symmetric Sierpinksi carpets.
We review  some of the definitions and properties of the p.c.f.
fractals and of the Laplacians defined on them. For more details the
reader can consult the books \cite{Kig_CUP01} and \cite{Str_Prin06}.
Recall that an \emph{iterated function system (i.f.s.)} is a collection
$\{F_{1},\dots,F_{N}\}$ of contractions on $\mathbb{R}^{d}$. For
such an i.f.s. there exists a unique self-similar set $K$ satisfying
(see \cite{Hut_81})
\[
K=F_{1}(K)\bigcup\dots\bigcup F_{N}(K).
\]
For $\omega_{1},\omega_{2},\dots\omega_{n}\in\{1,\dots,N\}$, $\omega=\omega_{1}\omega_{1}\dots\omega_{n}$
is a word of length $n$ over the alphabet $\{1,\dots,N\}$. The subset
$K_{\omega}=F_{\omega}(K):=F_{\omega_{1}}\circ\dots\circ F_{\omega_{n}}(K)$
is called a cell of level $n$. The set of all finite words over $\{1,\dots,N\}$
is denoted by $W_{*}$. Each map $F_{i}$ of the i.f.s. defining $K$
has a unique fixed point $x_{i}$. We say that $K$ is a \emph{post-critically
finite (p.c.f.) self-similar} set if there is a subset $V_{0}\subseteq\{x_{1},\dots,x_{N}\}$
satisfying

\[
F_{\omega}(K)\bigcap F_{\omega^{\prime}}(K)\subseteq F_{\omega}(V_{0})\bigcap F_{\omega^{\prime}}(V_{0})
\]
for any $\omega\ne\omega^{\prime}$ having the same length. The set
$V_{0}$ is called the \emph{boundary} of $K$ and the boundary of
a cell $K_{\omega}$ is $F_{\omega}(V_{0})$. One defines $V_{1}=\bigcup_{i}F_{i}(V_{0})$,
and, inductively, $V_{n}=\bigcup_{i}F_{i}(V_{n-1})$ for $n\ge2$.
The fractal $K$ is the closure of $\bigcup_{n}V_{n}$.

The Laplacian on p.c.f. fractals may be built using Kigami's construction
\cite{Kig_CUP01} from a self-similar Dirichlet energy form $\mathcal{E}$
on $K$ with weights $\{r_{1},\dots,r_{N}\}$:
\[
\mathcal{E}(u)=\sum_{i=1}^{N}r_{i}^{-1}\mathcal{E}(u\circ F_{i}).
\]
The existence of such forms is non-trivial, but on a large collection
of examples they may be obtained from the approximating graphs as
the appropriate renormalized limit of graph energies (\cite{Kig_CUP01,Str_Prin06}).

The second ingredient is the existence of a unique self-similar measure
\[
\mu(A)=\sum_{i=1}^{N}\mu_{i}\mu(F^{-1}(A)),
\]
 where $\{\mu_{1},\dots,\mu_{N}\}$ are weights such that $0<\mu_{i}<1$
and $\sum\mu_{i}=1$, see \cite{Hut_81}.

Then the Laplacian is defined weakly: $u\in\dom\Delta_{\mu}$ with
$\Delta_{\mu}u=f$ if
\[
\mathcal{E}(u,v)=-\int_{X}fvd\mu
\]
 for all $v\in\dom\mathcal{E}$ with $v|_{V_{0}}=0$. The domain of
the Laplacian depends on the assumptions that one makes about $f$.
Kigami (\cite{Kig_CUP01}) assumes that $f$ is continuous, but in this paper it will be more natural to assume that $f$ is in $L^{2}(\mu)$, which
gives a Sobolev space (see Section \ref{sec:Sobolev-Spaces}). We write $u\in\dom_{L^{p}}\Delta$ if $f\in L^{p}(\mu)$.

The \emph{effective resistance metric} $R(x,y)$ on $K$ is defined
via
\[
R(x,y)^{-1}=\min\{\mathcal{E}(u)\,:\, u(x)=0\mbox{ and }u(x)=1\}.
\]
 It is known that the resistance metric is topologically equivalent,
but not metrically equivalent to the Euclidean metric (\cite{Kig_CUP01,Str_Prin06}).

The unit interval and the Sierpinski gasket (\cite{Kig_CUP01,Str_Prin06,FuSh_92,BassStrTep_JFA99,NeStTe_04,Str_TAMS03,Tep_JFA98})
are important examples of p.c.f. fractals. Other examples are the
affine nested fractals \cite{FiHaKu_94} that in turn are generalizations
of the nested fractals \cite{Lin_MAMS90}.

Some of the spaces that we consider in this paper are built from p.c.f.
fractals as in \cite{Str_CJM98,Str_TAMS03}. In those papers the author
defines fractal blow-ups of a p.c.f. fractal $K$ and fractafolds
based on $K$. The former generalizes the relationship between the
unit interval and the real line to arbitrary p.c.f. self-similar sets,
while the latter is the natural analogue of a manifold. Let $w\in\{1,\dots,N\}^{\infty}$
be an infinite word. Then
\[
F_{w_{1}}^{-1}\dots F_{w_{m}}^{-1}K\subseteq F_{w_{1}}^{-1}\dots F_{w_{m}}^{-1}F_{w_{m+1}}^{-1}K.
\]
The \emph{fractal blow-up} is
\[
X=\bigcup_{m=1}^{\infty}F_{w_{1}}^{-1}\dots F_{w_{m}}^{-1}K.
\]
If $C$ is an $n$ cell in $K$, then $F_{w_{1}}^{-1}\dots F_{w_{m}}^{-1}C$
is called an $(n-m)$ cell. The blow-up depends on the choice of the
infinite word $w$. In general there are an uncountably infinite number
of blow-ups which are not homeomorphic. In this paper we assume that
the infinite blow-up $X$ has no boundary. This happens unless all
but a finite number of letters in $w$ are the same. One can extend
the definition of the energy $\E$ and measure $\mu$ to $X$. The
measure $\mu$ will be $\sigma$-finite rather than finite. Then one
can define a Laplacian on $X$ by the weak formulation. It is known
that, for a large class of p.c.f. fractals, the Laplacian on an infinite
blow-up without boundary has pure point spectrum (\cite{Tep_JFA98},
\cite{Sab:JFA00}).

One can view a fractal blow-up as a collection of copies of $K$ that
are glued together at boundary points. Every point of $X$ has a neighborhood
which is homeomorphic to a neighborhood of a point in $K$. A \emph{fractafold
\cite{Str_TAMS03}} based on $K$ is defined to be a set which satisfies
this latter property. We will work on a restricted class of fractafolds,
which may be thought of as more like triangulated manifolds. Specifically
we will consider fractafolds $X$ that consist of a finite or infinite
union of copies of $K$ glued together at some of the boundary points.
The fractafold $X$ is compact if and only if we consider a finite
number of copies of $K$. We suppose in the following that all the
copies of $K$ have the same size in $X$. If all the boundary points
of the copies of $K$ are paired, then the fractafold $X$ has no
boundary. When $K$ is the unit interval this construction produces
the unit circle. The next simplest example is the double cover of the Sierpinski
gasket, where one consider two copies of the fractal with corresponding
boundary points paired. One can extend the definition of energy and
Laplacian from $K$ to a fractafold based on $K$. An explicit
description of the spectral resolution of the fractafold Laplacian
for certain infinite fractafolds is given in \cite{Str_Tep_2010_arx}.

Products of fractals provide another important class of examples for
our results. An important point to keep in mind is that product of
p.c.f. fractals is not a p.c.f. fractal. Strichartz described
in \cite{Str:TAM04} how one can extend the definition of the Laplacian
and energy to products of fractals.

The estimates \eqref{eq:heat_estimates} are known to be true for
$0<t<1$ on
a large number of p.c.f. fractals (\cite{FiHaKu_94}, \cite{BaPe_PTRF88},
\cite{HaKu_PLMS99}, \cite{HaKu_PTRD03}, \cite{Str_08Quant}) and
Sierpinski carpets (\cite{BarBas_89,BarBas_CJM99}), with $R$ being
the resistance metric, the constant $d$ being the Hausdorff dimension
with respect to the resistance metric, and $\gamma$ a constant specific
to the fractal. For the Dirichlet Laplacian on a p.c.f. fractal the
estimate for $t>1$ is immediate because the heat kernel decays
exponentially at a rate 
determined by the smallest eigenvalue. In the case of the Neumann
Laplacian one should instead modify the heat kernel by subtracting its
projection 
onto the zero eigenspace of constant functions, which has exponential
decay controlled by the first non-zero eigenvalue. This latter 
change does not affect the definition of pseudo-differential operators
because their action is on the eigenspaces with non-zero eigenvalues; 
accordingly we will abuse notation and refer to this modified heat
kernel as just the heat kernel.  No modification is needed in the case
of blowups 
of p.c.f. fractals. For many fractals, lower bounds on the heat kernel
are also known, but we will not need these in this paper.  We also note that the heat kernel
is holomorphic on $\{\operatorname{Re}z>0\}$ (see
\cite{Kig_CUP01}). 

We mention that estimates of the form \eqref{eq:heat_estimates} are
also known for sums of even powers of vector fields on homogeneous
groups \cite{Heb_StudMath89}. However, we will not pursue these spaces
in this paper.

In this paper we write $A(y)\lesssim B(y)$ if there is a constant
$C$ \emph{independent} of $y$, but which might depend on the space
$X$, such that $A(y)\le CB(y)$ for all $y$. We write $A(y)\sim B(y)$
if $A(y)\lesssim B(y)$ and $B(y)\lesssim A(y)$. If $f(x,y)$ is
a function on $X_{1}\times X_{2}$, then we write $\Delta_{1}f$ to
denote the Laplacian of $f$ with respect to the first variable and
$\Delta_{2}f$ to denote the Laplacian of $f$ with respect to the
second variable; repeated subscripts indicate composition, for example
$\Delta_{21}=\Delta_{2}\circ\Delta_{1}$. We say that a function $u$
is \emph{smooth} if $u\in\operatorname{Dom}\Delta^{n}$ for all $n\ge0$.

\section{\label{sec:Symbols-and-pseudo-differential}Symbols and pseudo-differential
operators on fractals}

\global\long\def\D{\mathcal{D}}
\global\long\def\supp{\operatorname{supp}}
In this section $(X,R)$ is a metric measure space with a  Laplacian
$\Delta$ that satisfies the conditions \eqref{eq:double} and \eqref{eq:heat_estimates}.
We write $P(\lambda)$ for the spectral resolution of the positive
self-adjoint operator $-\Delta$. Examples of spaces for which the
results of this section apply include compact or infinite blow-ups
without boundary or products of copies of the same fractafold. In
the case that $X$ is either a compact fractafold without boundary
or an infinite fractafold without boundary for which the Laplacian
has pure point spectrum (\cite{Tep_JFA98,Sab:JFA00}), we write $P_{\lambda}$
for the spectral projection corresponding to the eigenvalue $\lambda$.
If $X$ is compact or $\Delta$ has pure point spectrum we fix
an orthonormal basis $\{\,\phi_{n}\,\}_{n\in\mathbb{N}}$ or $\{\,\phi_{n}\,\}_{n\in\mathbb{Z}}$
of $L^{2}(\mu)$ consisting of eigenfunctions with compact support
and write $\D$ for the dense set of finite linear combinations
with respect to this orthonormal basis. If $X$ is a product of such
fractals then there is a natural basis of $L^{2}(\mu)$ obtained
by taking products of eigenfunctions on each fiber (\cite{Str:TAM04}).
The following are the main objects of study in this paper.
\begin{defn}
For fixed $m\in\mathbb{R}$ we define the symbol class $S^{m}$ to
be the set of $p\in C^{\infty}((0,\infty))$ with the property that
for any integer $k\ge0$ there is $C_{k}>0$ such that
\[
\left\vert \bigl(\lambda\frac{d}{d\lambda}\bigr)^{k}p(\lambda)\right\vert \le C_{k}(1+\lambda)^{\frac{m}{d+1}}
\]
 for all $\lambda>0$, where $d$ is an in \eqref{eq:heat_estimates}.\end{defn}
\begin{rem}
The rationale for dividing $m$ by $d+1$ is that the Laplacian behaves
like an operator of order $d+1$.
\end{rem}
If $p$ is any bounded Borel function on $(0,\infty)$ then one can
define an operator $p(-\Delta)$ via
\[
p(-\Delta)u=\int_{0}^{\infty}p(\lambda)dP(\lambda)(u).
\]
This operator extends to a bounded operator on $L^{2}(\mu)$ by the
spectral theorem. If $p\in S^{m}$ with $m>0$, then $q(\lambda):=(1+\lambda)^{-\frac{m}{d+1}}p(\lambda)$
is bounded and one can define $p(-\Delta)=(I-\Delta)^{\frac{m}{d+1}}q(-\Delta)$.
\begin{defn}
\label{def:psdo_cc} For fixed $m\in\mathbb{R}$ define the class
$\Psi DO_{m}$ of pseudo-differential operators on $X$ to be the
collection of operators $p(-\Delta)$ with $p\in S^{m}$.
\end{defn}
If $X$ is a compact fractafold without boundary or $\Delta$ has
pure point spectrum then the formula for a pseudo-differential operator is
\[
p(-\Delta)u=\sum_{\lambda\in\Lambda}p(\lambda)P_{\lambda}u
\]
for $p\in S^{m}$ and $u\in\mathcal{D}$, where $\Lambda$ is the
spectrum of $-\Delta$.
\begin{prop}
[Symbolic Calculus]\label{lem:Symboliccalculus} If $p_{1}\in S^{m_{1}}$
and $p_{2}\in S^{m_{2}}$ then $p_{1}p_{2}\in S^{m_{1}+m_{2}}$ and
\[
p_{1}(-\Delta)\circ p_{2}(-\Delta)=p_{1}p_{2}(-\Delta).
\]
\end{prop}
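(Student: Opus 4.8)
The statement has two parts: a product estimate on symbols, and the operator identity. The first part is purely a calculus exercise; the second follows from the spectral theorem.

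For the symbol bound, the plan is to apply the Leibniz rule to the operator $\lambda\frac{d}{d\lambda}$, which (being a first-order derivation) satisfies $(\lambda\frac{d}{d\lambda})^k(p_1p_2) = \sum_{j=0}^k \binom{k}{j}\bigl((\lambda\frac{d}{d\lambda})^j p_1\bigr)\bigl((\lambda\frac{d}{d\lambda})^{k-j} p_2\bigr)$. Each factor is controlled by the defining inequality of $S^{m_1}$ and $S^{m_2}$ respectively, so the product of two such factors is bounded by a constant times $(1+\lambda)^{m_1/(d+1)}(1+\lambda)^{m_2/(d+1)} = (1+\lambda)^{(m_1+m_2)/(d+1)}$. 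Summing the finitely many terms with the binomial coefficients gives a constant $C_k$ depending only on $k$, $\{C_j^{(1)}\}_{j\le k}$, and $\{C_j^{(2)}\}_{j\le k}$. Smoothness of $p_1p_2$ on $(0,\infty)$ is immediate since each factor is smooth there. Hence $p_1p_2\in S^{m_1+m_2}$.

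For the operator identity, I would argue by the functional calculus for the self-adjoint operator $-\Delta$. In the case where $\Delta$ has pure point spectrum (or $X$ is a compact fractafold without boundary), this is most transparent on the dense subspace $\D$: for $u = \sum_{\text{finite}} c_n\phi_n$ one simply computes $p_1(-\Delta)\bigl(p_2(-\Delta)u\bigr) = p_1(-\Delta)\sum_n c_n p_2(\lambda_n)\phi_n = \sum_n c_n p_1(\lambda_n)p_2(\lambda_n)\phi_n = (p_1p_2)(-\Delta)u$, where one must note that $p_2(-\Delta)u$ still lies in a suitable domain so that $p_1(-\Delta)$ may be applied term by term; since the sum is finite this is no issue. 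One then checks that both sides are determined by their action on $\D$. In the general (possibly continuous spectrum) case, I would instead invoke the multiplicativity of the Borel functional calculus directly: $\int_0^\infty p_1\,dP \circ \int_0^\infty p_2\,dP = \int_0^\infty p_1 p_2\,dP$ on the natural domain, together with the definition $p(-\Delta) = (I-\Delta)^{m/(d+1)}q(-\Delta)$ for $m>0$, so that composition of the two operators corresponds to multiplication of the symbols $(1+\lambda)^{m_i/(d+1)}q_i(\lambda)$.

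The only genuinely delicate point is \textbf{domain bookkeeping} when $m_1$ or $m_2$ is positive: then $p_i(-\Delta)$ is unbounded, and one must confirm that $p_2(-\Delta)u$ lies in the domain of $p_1(-\Delta)$ and that the composition equals $(p_1p_2)(-\Delta)$ on the domain of the latter (which is $\dom(I-\Delta)^{(m_1+m_2)/(d+1)}$, or all of $\D$ in the pure-point case). This is handled by the standard fact that for the functional calculus, $\dom f(-\Delta)\cap\dom (fg)(-\Delta)$ is a core and the identity $f(-\Delta)g(-\Delta) = (fg)(-\Delta)$ holds as an identity of closed operators when $g$ is comparable to a power of $(1+\lambda)$; equivalently, since $\D$ consists of finite linear combinations of eigenfunctions (in the pure-point case) it is contained in every relevant domain and the identity is verified there by direct computation, then extended by density and closedness. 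I expect this to be the main — though still routine — obstacle; the symbol estimate itself is entirely mechanical.
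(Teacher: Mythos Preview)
Your proposal is correct and follows essentially the same route as the paper: Leibniz rule for the symbol estimate and multiplicativity of the spectral functional calculus for the operator identity. The only cosmetic differences are that the paper applies Leibniz to $\lambda^{k}\frac{d^{k}}{d\lambda^{k}}$ rather than to the derivation $\lambda\frac{d}{d\lambda}$ (the two formulations of the $S^{m}$ condition being equivalent), and it dispatches the operator identity in one line via $P(\lambda)\bigl(p(-\Delta)u\bigr)=p(\lambda)P(\lambda)u$ without your more careful domain discussion.
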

\begin{proof}
Let $k\ge1$. Using Leibnitz's formula we have that
\[
\lambda^{k}\frac{d^{k}}{d\lambda^{k}}(p_{1}p_{2})(\lambda)=\sum_{j=0}^{k}\binom{k}{j}\lambda^{j}\frac{d^{j}}{d\lambda^{j}}p_{1}(\lambda)\lambda^{k-j}\frac{d^{k-j}}{d\lambda^{k-j}}p_{2}(\lambda).
\]
Therefore
\begin{align*}
\left\vert \lambda^{k}\frac{d^{k}}{d\lambda^{k}}(p_{1}p_{2})(\lambda)\right\vert  & \le\sum_{j=0}^{k}\binom{k}{j}(1+\lambda)^{\frac{m_{1}}{d+1}}(1+\lambda)^{\frac{m_{2}}{d+1}}\\
 & =2^{k}(1+\lambda)^{\frac{m_{1}+m_{2}}{d+1}}.
\end{align*}
Thus $p_{1}p_{2}\in S^{m_{1}+m_{2}}$. Using the identity $P(\lambda)(p(-\Delta)u)=p(\lambda)P(\lambda)(u)$
we have that
\begin{align*}
p_{1}(-\Delta)\circ p_{2}(-\Delta)u & =\int p_{1}(\lambda)P(\lambda)(p_{2}(-\Delta)u)\\
 & =\int p_{1}(\lambda)p_{2}(\lambda)P(\lambda)(u)=(p_{1}p_{2})(-\Delta)u.
\end{align*}

\end{proof}
The main result of this section is Theorem \ref{thm:smooth_kernel}, which says that if $p\in S^{0}$ then $p(-\Delta)$ is given by integration
with respect to a kernel that is smooth off the diagonal and satisfies
specific decay estimates off the diagonal. When $X$ is a
fractafold based on a p.c.f. fractal we then obtain from \cite[Theorem 1]{IR_2010} 
that $p(-\Delta)$ is a Calder\'on-Zygmund operator in the sense
of \cite[Section I.6.5]{Ste93}. We begin with a technical lemma.
\begin{lem}
\label{lem:ineq}Let  $\alpha>0$ and $R>0$ be fixed. Then
\begin{equation}
\sum_{n\in\mathbb{Z}}2^{\frac{n\alpha}{d+1}}\int\exp\bigl(-cR{}^{(d+1)\gamma}2^{n\gamma}(1+\xi^{2})^{-\frac{\gamma+1}{2}}\bigr)\frac{1}{(1+\xi^{2})^{j}}d\xi\le CR{}^{-\alpha}\label{eq:ineq}
\end{equation}
provided $j\ge\frac{\alpha(\gamma+1)}{2\gamma(d+1)}+\frac{1}{2}$.\end{lem}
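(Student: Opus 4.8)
The plan is to interchange the sum and the integral — legitimate by Tonelli's theorem since the integrand is nonnegative — and thereby reduce matters to a single uniform estimate on a dyadic sum. Fix $\xi\in\mathbb{R}$ and put $a:=cR^{(d+1)\gamma}(1+\xi^{2})^{-\frac{\gamma+1}{2}}>0$, so that the inner sum is exactly $S(a):=\sum_{n\in\mathbb{Z}}2^{\frac{n\alpha}{d+1}}e^{-a2^{n\gamma}}$. The crux is the claim
\[
S(a)\ \lesssim\ a^{-\frac{\alpha}{\gamma(d+1)}}\qquad\text{with a constant independent of }a>0 .
\]

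To prove the claim I would split the sum at the index $n_{0}:=\lfloor-\gamma^{-1}\log_{2}a\rfloor$, the point where $a2^{n\gamma}\sim1$. For $n\le n_{0}$ one uses $e^{-a2^{n\gamma}}\le1$ and sums the geometric series $\sum_{n\le n_{0}}2^{n\alpha/(d+1)}$, whose common ratio $2^{-\alpha/(d+1)}$ is $<1$ because $\alpha>0$; this contributes $\lesssim 2^{n_{0}\alpha/(d+1)}\le a^{-\alpha/(\gamma(d+1))}$. For $n>n_{0}$, writing $\beta:=\frac{\alpha}{\gamma(d+1)}$ and $s_{n}:=a2^{n\gamma}$ one has $2^{n\alpha/(d+1)}e^{-a2^{n\gamma}}=a^{-\beta}s_{n}^{\beta}e^{-s_{n}}$; since $s_{n}\ge1$ for $n>n_{0}$ and $s_{n+1}=2^{\gamma}s_{n}$ grows geometrically while $t\mapsto t^{\beta}e^{-t}$ decays super-polynomially, the tail $\sum_{n>n_{0}}s_{n}^{\beta}e^{-s_{n}}$ is dominated by $\sum_{m\ge0}2^{\gamma\beta(m+1)}e^{-2^{m\gamma}}<\infty$, a constant depending only on $\beta$ and $\gamma$. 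Combining the two pieces gives the claim.

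Finally I would substitute back: since $a^{-\beta}=\bigl(cR^{(d+1)\gamma}\bigr)^{-\beta}(1+\xi^{2})^{\frac{(\gamma+1)\beta}{2}}$ and $(d+1)\gamma\beta=\alpha$, the claim yields $S(a)\lesssim R^{-\alpha}(1+\xi^{2})^{\frac{\alpha(\gamma+1)}{2\gamma(d+1)}}$, so the left-hand side of \eqref{eq:ineq} is bounded by $R^{-\alpha}\int_{\mathbb{R}}(1+\xi^{2})^{\frac{\alpha(\gamma+1)}{2\gamma(d+1)}-j}\,d\xi$. This last integral is a finite constant precisely under the hypothesis on $j$ (which forces the exponent to be at most $-\tfrac12$), and \eqref{eq:ineq} follows. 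The only genuine work is the uniform bound on $S(a)$; everything else is bookkeeping with exponents, and I do not expect any serious obstacle — the one point to be careful about is keeping the constants in the geometric-series and tail estimates independent of $a$ (equivalently, of $\xi$ and $R$).
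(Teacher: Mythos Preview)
Your argument is correct and takes a genuinely different route from the paper's. The paper first estimates the $\xi$-integral for fixed $n$: setting $A=cR^{(d+1)\gamma}2^{n\gamma}$ it shows, via a substitution reducing to an incomplete Gamma integral, that $\int\exp\bigl(-A(1+\xi^{2})^{-(\gamma+1)/2}\bigr)(1+\xi^{2})^{-j}\,d\xi\lesssim A^{(1-2j)/(\gamma+1)}$ for $A\ge1$ (and $\lesssim 1$ for $A\le1$), and only then sums over $n$, splitting at the $\xi$-independent threshold $2^{n_{0}}\simeq R^{-(d+1)}$. You instead swap the order by Tonelli and do the sum first, reducing everything to the scaling identity $S(a)\lesssim a^{-\alpha/(\gamma(d+1))}$ for the dyadic series, after which the $\xi$-integral is just a power of $(1+\xi^{2})$. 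Your route is cleaner: the core estimate is a one-line geometric/tail split, and the convergence condition on $j$ drops out transparently from $\int(1+\xi^{2})^{p}\,d\xi<\infty\iff p<-\tfrac12$. The paper's route has the minor advantage that its integral estimate $I\lesssim A^{(1-2j)/(\gamma+1)}$ is a reusable bound on the complex heat kernel paired against a decaying weight. One small remark: at the endpoint $j=\frac{\alpha(\gamma+1)}{2\gamma(d+1)}+\frac12$ your final integral has exponent exactly $-\tfrac12$ and diverges, so strictly speaking one needs $>$ rather than $\ge$; the paper's proof has the same issue, and in any case the applications only use $j$ large.
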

\begin{proof}
Consider the integral
\begin{align*}
    I&= \int_{\mathbb{R}}
    \exp\bigl(-A(1+\xi^{2})^{-\frac{\gamma+1}{2}}\bigr)\frac{1}{(1+\xi^{2})^{j}}d\xi
     \\ 
    &=2\int_{0}^{\infty}
    \exp\bigl(-A(1+\xi^{2})^{-\frac{\gamma+1}{2}}\bigr)\frac{1}{(1+\xi^{2})^{j}}d\xi. 
    \end{align*}
Since $2(1+\xi^{2})\geq(1+\xi)^{2}$ on $(0,\infty)$ we have
\begin{equation*}
    I 
    \lesssim \int_{1}^{\infty} \exp(-\frac{A}{2}\xi^{-\gamma+1})\frac{d\xi}{\xi^{2j}}
    \simeq A^{\frac{1-2j}{\gamma+1}} \int_{0}^{A/2} e^{-t} t^{\frac{2j-1}{\gamma+1}} \frac{dt}{t}
    \leq C(j,\gamma) A^{\frac{1-2j}{\gamma+1}}
    \end{equation*}
for $j>\frac{1}{2}$.  We use this bound for $A\geq 1$ and the obvious bound by $C(j)$ for $A\leq 1$.  Let $A=cR^{(d+1)\gamma}2^{n\gamma}$ and $n_{0}$ be such that $A\geq 1$ iff $n\geq n_{0}$.  The series in the statement of the lemma is bounded by
\begin{equation*}
    C(j)\sum_{n< n_{0}} 2^{\frac{n\alpha}{d+1}} + C(j,\gamma) R^{\frac{(d+1)(1-2j)\gamma}{\gamma+1}}\sum_{n\geq n_{0}} 2^{n\bigl( \frac{\alpha}{d+1}-\frac{(2j-1)\gamma}{\gamma+1}\bigr)}
    \end{equation*}
which converges for $j$ as in the lemma.  The estimate follows from the fact that $2^{n_{0}}\simeq \frac{1}{c}R^{-(d+1)}$.
\end{proof}

\begin{thm}
\label{thm:smooth_kernel}Let $p:(0,\infty)\to\mathbb{C}$ be an $S^{0}$-symbol,
that is $p$ is smooth and for all $k\ge0$ there is $C_{k}>0$ such
that
\begin{equation}
\left\vert \lambda^{k}\frac{\partial^{k}}{\partial\lambda^{k}}p(\lambda)\right\vert \le C_{k}.\label{eq:symbol}
\end{equation}
Then $p(-\Delta)$ has a kernel $K(x,y)$ that is smooth off the diagonal
of $X\times X$ and satisfies
\begin{equation}
\vert K(x,y)\vert\lesssim R(x,y)^{-d}\label{eq:est1}
\end{equation}
and
\begin{equation}
\vert\Delta_{x}^{l}\Delta_{y}^{k}K(x,y)\vert\lesssim R(x,y)^{-d-(l+k)(d+1)}.\label{eq:est2}
\end{equation}
\end{thm}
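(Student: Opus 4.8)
The plan is to realize $p(-\Delta)$ as a superposition of complex-time heat operators by a Littlewood--Paley decomposition in the spectral parameter, and then to sum the resulting pointwise kernel bounds with Lemma~\ref{lem:ineq}. First I would fix $\psi\in C^{\infty}((0,\infty))$ supported in $[\tfrac12,2]$ with $\sum_{n\in\mathbb Z}\psi(2^{-n}\lambda)=1$ on $(0,\infty)$, set $p_n(\lambda)=p(\lambda)\psi(2^{-n}\lambda)$, and note that $p=\sum_n p_n$ with a locally finite sum, so $p(-\Delta)=\sum_n p_n(-\Delta)$ strongly on $L^{2}(\mu)$. The role of the rescaling $m_n(\mu):=p_n(2^{n}\mu)$ is that the symbol estimate \eqref{eq:symbol} says exactly that $\{m_n\}$ is a \emph{bounded} family in $C^{\infty}_{c}([\tfrac12,2])$, uniformly in $n$, while $p_n(-\Delta)=m_n(-2^{-n}\Delta)$. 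For the derivative bounds \eqref{eq:est2} I would use the symbolic calculus (Proposition~\ref{lem:Symboliccalculus}), with $\lambda^{l+k}\in S^{(l+k)(d+1)}$, to see that $q(\lambda):=(-\lambda)^{l+k}p(\lambda)\in S^{(l+k)(d+1)}$; then $\Delta^{l+k}p(-\Delta)=q(-\Delta)$ has kernel $\Delta_x^{l}\Delta_y^{k}K(x,y)$, and the same scheme applies to $q$ once $q_n(2^{n}\,\cdot\,)$ is divided by $2^{n(l+k)}$ to restore uniform $C^{\infty}_{c}$ bounds.

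Next I would pass to complex time. Writing $m_n(\mu)=e^{-\mu}\tilde m_n(\mu)$ with $\tilde m_n:=e^{\mu}m_n$ (again a bounded family in $C^{\infty}_{c}([\tfrac12,2])$) and inserting Fourier inversion $\tilde m_n(\mu)=\frac1{2\pi}\int\hat{\tilde m}_n(\xi)e^{i\mu\xi}\,d\xi$, one gets, since $-\Delta\ge0$,
\[ p_n(-\Delta)=\frac1{2\pi}\int_{\mathbb R}\hat{\tilde m}_n(\xi)\,e^{z_n(\xi)\Delta}\,d\xi,\qquad z_n(\xi)=2^{-n}(1-i\xi),\ \operatorname{Re}z_n(\xi)=2^{-n}>0, \]
so $p_n(-\Delta)$ has kernel $K_n(x,y)=\frac1{2\pi}\int\hat{\tilde m}_n(\xi)\,h_{z_n(\xi)}(x,y)\,d\xi$ and $\Delta_x^{l}\Delta_y^{k}$ may be moved inside the integral. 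Since the $\tilde m_n$ are uniformly bounded in $C^{\infty}_{c}$, one has $|\hat{\tilde m}_n(\xi)|\le C_j(1+\xi^2)^{-j}$ for every $j$ with $C_j$ independent of $n$, which will absorb polynomial factors in $\xi$.

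The quantitative heart of the argument will be a complex-time heat kernel bound: from \eqref{eq:heat_estimates}, the holomorphy of $z\mapsto h_z(x,y)$ on $\{\operatorname{Re}z>0\}$, and a Phragm\'en--Lindel\"of / Cauchy-estimate argument one should obtain
\[ \bigl|\Delta_x^{l}\Delta_y^{k}h_z(x,y)\bigr|\lesssim (\operatorname{Re}z)^{-\beta-l-k}\exp\!\bigl(-c\operatorname{Re}\bigl[(R(x,y)^{d+1}/z)^{\gamma}\bigr]\bigr). \]
Evaluating at $z=z_n(\xi)$, using $\operatorname{Re}z_n(\xi)=2^{-n}$ and the elementary bound $\operatorname{Re}\bigl[(R^{d+1}/z_n(\xi))^{\gamma}\bigr]\gtrsim R^{(d+1)\gamma}2^{n\gamma}(1+\xi^2)^{-(\gamma+1)/2}$, and including the factor $2^{n(l+k)}$ from the rescaling of $q$, I expect
\[ \bigl|\Delta_x^{l}\Delta_y^{k}K_n(x,y)\bigr|\lesssim 2^{n\alpha/(d+1)}\int_{\mathbb R}(1+\xi^2)^{-j}\exp\!\bigl(-c'R(x,y)^{(d+1)\gamma}2^{n\gamma}(1+\xi^2)^{-(\gamma+1)/2}\bigr)\,d\xi , \]
with $\alpha=d+(l+k)(d+1)$ and $j$ as large as needed. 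Summing over $n$ and applying Lemma~\ref{lem:ineq} then gives $\sum_n|\Delta_x^{l}\Delta_y^{k}K_n(x,y)|\lesssim R(x,y)^{-d-(l+k)(d+1)}$, which is \eqref{eq:est2}, and \eqref{eq:est1} is the case $l=k=0$. Since the series converges absolutely and uniformly on $\{R(x,y)\ge\varepsilon\}$ for each $\varepsilon>0$ and $\Delta$ is closed, $K=\sum_n K_n$ is a well-defined kernel for $p(-\Delta)$, smooth off the diagonal.

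I expect the main obstacle to be the complex-time heat kernel estimate above: \eqref{eq:heat_estimates} is assumed only for real $t>0$, so one must propagate it to the full open right half-plane---necessarily degenerating as $z$ approaches the imaginary axis---with precisely the exponent $\operatorname{Re}[(R^{d+1}/z)^{\gamma}]$, since it is exactly this form, making the exponent in the $\xi$-integral of Lemma~\ref{lem:ineq} decay like $(1+\xi^2)^{-(\gamma+1)/2}$, that lets the $n$-summation close. The rest---the two rescalings, interchanging $\Delta_x^{l}\Delta_y^{k}$ with the $\xi$-integral and with $\sum_n$, and identifying $\sum_n K_n$ as the kernel of $p(-\Delta)$---is routine bookkeeping once the $n$-uniform rapid decay of $\hat{\tilde m}_n$ is in hand.
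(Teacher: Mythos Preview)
Your proposal is correct and follows essentially the same route as the paper: Littlewood--Paley localization $p_n(\lambda)=p(\lambda)\psi(2^{-n}\lambda)$, rescaling to a uniformly bounded family in $C^{\infty}_{c}([\tfrac12,2])$, multiplication by $e^{\mu}$ and Fourier inversion to write $p_n(-\Delta)$ as a $\xi$-integral of complex heat operators $e^{z_n(\xi)\Delta}$, a complex-time heat kernel bound, and then summation via Lemma~\ref{lem:ineq}; derivatives are handled exactly as you say, by passing to $q(\lambda)=\lambda^{l+k}p(\lambda)$ and absorbing the extra $2^{n(l+k)}$ into $\alpha$. The one point where the paper differs from your sketch is the ``main obstacle'' you flag: rather than deriving the complex-time sub-Gaussian estimate by Phragm\'en--Lindel\"of, the paper quotes it from Davies (Lemma~3.4.6 of \cite{Dav_CTM90} for the on-diagonal bound $|h_z|\lesssim(\operatorname{Re}z)^{-\beta}$ and Lemma~9 of \cite{Dav_JFA95} to upgrade to the off-diagonal form $|h_z(x,y)|\lesssim(|z|\cos\theta)^{-\beta}\exp\bigl(-c\,R(x,y)^{(d+1)\gamma}|z|^{-\gamma}\cos\theta\bigr)$), which at $z=2^{-n}(1-i\xi)$ gives precisely the $(1+\xi^2)^{-(\gamma+1)/2}$ exponent needed for Lemma~\ref{lem:ineq}.
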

\begin{proof}
We begin with the Littlewood-Paley dyadic decomposition from \cite[page 242]{Ste_PMS43_93}.
Let $\eta$ be a $C^{\infty}$ function with $\eta(\lambda)=1$
if $\vert\lambda\vert\le1$ and $\eta(\lambda)=0$ if $\vert\lambda\vert\ge2$
and let $\delta(\lambda)=\eta(\lambda)-\eta(2\lambda)$. Then \global\long\def\supp{\operatorname{supp}}
 $\supp\delta\subseteq\{\frac{1}{2}\le\vert\lambda\vert\le2\}$ and
\[
\sum_{n\in\mathbb{Z}}\delta(2^{-n}\lambda)=1,
\]
where for each $\lambda$ there are only two nonzero terms in the
above sum. Then $\delta$ is $C_{c}^{\infty}$ and we let $D_{k}>0$ for each $k\geq0$
such that
\begin{equation}
\left\vert \frac{d^{k}}{d\lambda^{k}}\delta(\lambda)\right\vert \le D_{k}.\label{eq:delta_bounds}
\end{equation}

For $n\in\mathbb{Z}$ let $p_{n}(\lambda)=p(\lambda)\delta(2^{-n}\lambda)$.
Then $\supp p_{n}\subseteq[2^{n-1},2^{n+1}]$ (since we assume that
$p$ is defined on $(0,\infty)$) and
\begin{equation}
p(\lambda)=\sum_{n\in\mathbb{Z}}p_{n}(\lambda).\label{eq:psumpn}
\end{equation}
Moreover we can use \eqref{eq:symbol} and the fact that the support
of $p_{n}$ has support in $[2^{n-1},2^{n+1}]$ to bound $\frac{d^{k}}{d\lambda^{k}}p_{n}$ by a constant
that depends only on $k$:
\begin{align}
    \left| \frac{d^{k}}{d\lambda^{k}}p_{n}(\lambda) \right|
    &=\left| \sum_{j=1}^{k}\binom{k}{j}\frac{d^{j}}{d\lambda^{j}}p(\lambda)\cdot2^{-n(k-j)}\frac{d^{k-j}}{d\lambda^{k-j}}\delta(2^{-n}\lambda)\right|\notag\\
    &\leq 2^{-nk}\sum_{j=0}^{k}\binom{k}{j}\frac{C_{j}}{\lambda^{j}}D_{k-j}2^{nj}\notag \\
    &\leq 2^{-nk}C(k)\sum_{j=1}^{k}\frac{2^{j}}{2^{nj}}2^{nj}=\overline{C}_{k}2^{-nk},\label{eq:estimates_pn}
    \end{align}
where $C(k)=\max C_{j}D_{j-k}$ and $\overline{C}_{k}=2^{k+1}C(k)$.

Fix now $n\in\mathbb{Z}$ and set $f_{n}(\lambda)=p_{n}(2^{n}\lambda)e^{\lambda}$.
Then $\supp f_{n}\subseteq[\frac{1}{2},2]$ and for all $k\ge0$ we
have that
\[
\frac{d^{k}}{d\lambda^{k}}f_{n}(\lambda)=\sum_{j=0}^{k}\binom{k}{j}2^{nj}\frac{d^{j}}{d\lambda^{j}}p_{n}(2^{n}\lambda)e^{\lambda}.
\]
Using \eqref{eq:estimates_pn} and the fact that $\lambda\le2$ we
obtain
\begin{equation}
\left\vert \frac{d^{k}}{d\lambda^{k}}f_{n}(\lambda)\right\vert \le\sum_{j=0}^{k}\binom{k}{j}2^{nj}\overline{C}_{j}2^{-nj}e^{2}=:A_{k}.\label{eq:estimates_fn}
\end{equation}

It follows immediately that there are constants $B_{k}$ independent of $n$ such that the Fourier transform $\hat{f}_{n}(\xi)$ of $f_{n}$ satisfies $|\xi^{k}\hat{f}_{n}(\xi)|\le B_{k}$, and thus for each $k\in\mathbb{Z}$
\begin{equation}
\vert\hat{f}_{n}(\xi)\vert\le\frac{\overline{D}_{k}}{(1+\xi^{2})^{k}}.\label{eq:fourier_bounds}
\end{equation}
for some constants $\bar{D}_{k}$ independent of $n$.

Now by Fourier inversion
\begin{equation*}
    p_{n}(\lambda)
    = e^{-\lambda2^{-n}} f_{n}(\lambda2^{-n})
    = e^{-\lambda2^{-n}} \frac{1}{2\pi} \int \hat{f}_{n}(\xi)e^{i \lambda\xi2^{-n}} d\xi
    \end{equation*}
so that
\begin{equation}
p_{n}(-\Delta)u=\frac{1}{2\pi}\int\hat{f}_{n}(\xi)e^{\Delta(1-i\xi)\frac{1}{2^{n}}}ud\xi\label{eq:pn_Delta}
\end{equation}
and the kernel of $p_{n}(-\Delta)$ is given by
\begin{equation}
K_{n}(x,y)=\frac{1}{2\pi}\int\hat{f}_{n}(\xi)h_{\frac{1}{2^{n}}-i\frac{\xi}{2^{n}}}(x,y)d\xi,\label{eq:kern_pn}
\end{equation}
where $h_{z}(x,y)$ is the complex heat kernel, which is holomorphic
on $\{\operatorname{Re}z>0\}$. A proof identical with that of Lemma
3.4.6 of \cite{Dav_CTM90} shows that there is $C>0$ such that
\begin{equation}
\vert h_{z}(x,y)\vert\le C(\operatorname{Re}z)^{-\frac{d}{d+1}}.\label{eq:complex_ker_est1}
\end{equation}
Using the above estimate together with the heat kernel estimates \eqref{eq:heat_estimates},
Lemma 9 of \cite{Dav_JFA95} implies that
\begin{equation}
\vert h_{z}(x,y)\vert\le C2^{\frac{d}{d+1}}(\vert z\vert\cos(\theta))^{-\frac{d}{d+1}}e^{-\frac{c\gamma R(x,y)^{(d+1)\gamma}}{2}\vert z\vert^{-\gamma}\cos(\theta)},\label{eq:complex_heat_kernel_est}
\end{equation}
where $z=r(\cos\theta+i\sin\theta)$. For $z=\frac{1}{2^{n}}-i\frac{\xi}{2^{n}}$
we have that $\vert z\vert=2^{-n}(1+\xi^{2})^{\frac{1}{2}}$, $\cos\theta=(1+\xi^{2})^{-\frac{1}{2}}$,
and $\vert z\vert\cos\theta=2^{-n}$. Thus
\begin{equation}
\left\vert h_{\frac{1}{2^{n}}-i\frac{\xi}{2^{n}}}(x,y)\right\vert \le C2^{\frac{d}{d+1}}2^{\frac{nd}{d+1}}\exp\left(-\frac{c\gamma}{2}R(x,y)^{(d+1)\gamma}2^{n\gamma}(1+\xi^{2})^{-\frac{\gamma+1}{2}}\right),\label{eq:heat_ker_est2}
\end{equation}
and
\[
\vert K_{n}(x,y)\vert\le C(d,j)2^{\frac{nd}{d+1}}\int\exp\left(-\frac{c\gamma}{2}R(x,y)^{(d+1)\gamma}2^{n\gamma}(1+\xi^{2})^{-\frac{\gamma+1}{2}}\right)\frac{1}{(1+\xi^{2})^{j}}d\xi,
\]
for all $j\ge1$, where the constants $C(d,j)$ depend only on $d$
and $j$. Lemma \ref{lem:ineq} with $\alpha=d$ and $R=R(x,y)$ implies
that if we choose $j$ large enough then $K(x,y)=\sum_{n\in\mathbb{Z}}K_{n}(x,y)$
is defined and continuous off the diagonal and it satisfies \eqref{eq:est1}.
Finally, using \eqref{eq:psumpn}, we see that $K(x,y)$ is the kernel
of $p(-\Delta)$.

Next we want to prove the $K(x,y)$ is smooth off the diagonal and
show that the estimates \eqref{eq:est2} hold. For simplicity we give the complete argument only in the case $l=0$ and $k=1$, i.e. we show that
\begin{equation}
\vert\Delta_{y}K(x,y)\vert\lesssim R(x,y)^{-2d-1}.\label{eq:eq2_particular}
\end{equation}
Let $q(\lambda)=\lambda p(\lambda)$ and $q_{n}(\lambda)=\lambda p_{n}(\lambda)$
for all $n\in\mathbb{Z}$. By \eqref{eq:estimates_pn}
we obtain that $|q_{n}^{(k)}(\lambda)|\le C(k)2^{(1-k)n}$
for all $k\ge0$. If we set $g_{n}(\lambda)=2^{-n}q_{n}(2^{n}\lambda)e^{\lambda}$ one then obtains $|g_{n}^{(k)}(\lambda)|\le A(k)$
where $A(k)$ are constants independent of $n$. Then a computation similar to \eqref{eq:fourier_bounds} shows that
\begin{equation}
\vert\hat{g}_{n}(\xi)\vert\le\frac{B(k)}{(1+\xi^{2})^{k}},\label{eq:est_gn}
\end{equation}
for all $k\ge0$, with $B(k)$ independent of $n$. Using the Fourier
inversion formula we have that
\[
q_{n}(\lambda)=\frac{1}{2\pi}2^{n}\int\hat{g}_{n}(\xi)e^{-\lambda\left(\frac{1}{2^{n}}-i\frac{\xi}{2^{n}}\right)}d\xi,
\]
and a similar formula holds for $q_{n}(-\Delta)$.  Hence
\[
\Delta_{y}K_{n}(x,y)=\frac{1}{2\pi}2^{n}\int\hat{g}_{n}(\xi)h_{\frac{1}{2^{n}}-i\frac{\xi}{2^{n}}}(x,y)d\xi,
\]
and so
\begin{align*}
\vert\Delta_{y}K_{n}(x,y)\vert & \le C(d)2^{n}2^{\frac{nd}{d+1}}\int\vert\hat{g}_{n}(\xi)\vert\exp\left(-c(\gamma)\frac{R(x,y)^{(d+1)\gamma}2^{n\gamma}}{(1+\xi^{2})^{\frac{\gamma+1}{2}}}\right)d\xi\\
 & \le  C(d,j)2^{\frac{n(2d+1)}{d+1}}\int\exp\left(-c(\gamma)\frac{R(x,y)^{(d+1)\gamma}2^{n\gamma}}{(1+\xi^{2})^{\frac{\gamma+1}{2}}}\right)\frac{1}{(1+\xi^{2})^{j}}d\xi,
\end{align*}
for all $j\ge1$. Using Lemma \ref{lem:ineq} with $\alpha=2d+1$
we find for $j$ large enough that $\Delta_{y}K(x,y)=\sum_{n}\Delta_{y}K_{n}(x,y)$
is well-defined and continuous off the diagonal and it satisfies \eqref{eq:eq2_particular}.

To prove \eqref{eq:est2} for general $l$ and $k$, one
can repeat the above steps for the functions $q(\lambda)=\lambda^{l+k}p(\lambda)$,
$q_{n}(\lambda)=\lambda^{l+k}p_{n}(\lambda)$, $g_{n}(\lambda)=2^{-n(l+k)}q_{n}(\lambda)e^{\lambda}$
and apply Lemma \ref{lem:ineq} with $\alpha=d+(l+k)(d+1)$.
\end{proof}
\begin{cor}
\label{pro:Prop_boundedness}Assume that $X$ is a fractafold without
boundary or a product of such fractafolds. If $p\in S^{0}$ then $p(-\Delta)$
is a Calder\'on-Zygmund operator and, thus, it extends to a bounded
operator on $L^{q}(\mu)$ for all $1<q<\infty$ and satisfies weak
$1-1$ estimates.\end{cor}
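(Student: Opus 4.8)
The plan is to read the conclusion off from Theorem~\ref{thm:smooth_kernel} together with the Calder\'on--Zygmund theory on spaces of homogeneous type, so that essentially nothing new has to be proved. First, since $p\in S^{0}$ is in particular a bounded Borel function on $(0,\infty)$, the spectral theorem gives that $p(-\Delta)$ is bounded on $L^{2}(\mu)$; and Theorem~\ref{thm:smooth_kernel} shows that on $\mathcal{D}$ (and off the supports of test functions) $p(-\Delta)$ is given by integration against a kernel $K(x,y)$ that is smooth off the diagonal and obeys \eqref{eq:est1} and \eqref{eq:est2}. Thus the two standing ingredients of Calder\'on--Zygmund theory are in place: $L^{2}$-boundedness and an associated off-diagonal kernel with the right size and regularity.

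Second, I would check that \eqref{eq:est1}--\eqref{eq:est2} are exactly the kernel hypotheses of \cite[Theorem~1]{IR_2010}. A fractafold $X$ based on a p.c.f. fractal, with its self-similar measure and resistance metric, is Ahlfors $d$-regular, $\mu(B(x,r))\sim r^{d}$, with the same exponent $d$ that appears in \eqref{eq:heat_estimates}; hence \eqref{eq:est1} is precisely the Calder\'on--Zygmund size bound $|K(x,y)|\lesssim \mu\bigl(B(x,R(x,y))\bigr)^{-1}$. The H\"ormander-type regularity condition is what \cite{IR_2010} extracts from \eqref{eq:est2} with $l=0$, $k=1$ (together with the sub-Gaussian heat bounds): since on a fractal there is no gradient, one instead uses that a function whose Laplacian decays like $R(x,\cdot)^{-d-(d+1)}$ is H\"older continuous in the resistance metric with a quantitative modulus, and chains this estimate across cells to obtain $\int_{R(x,y)\ge 2R(y,y')}|K(x,y)-K(x,y')|\,d\mu(x)\lesssim 1$. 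Invoking \cite[Theorem~1]{IR_2010} then yields that $p(-\Delta)$ is a Calder\'on--Zygmund operator in the sense of \cite[Section~I.6.5]{Ste93}, and the asserted boundedness on $L^{q}(\mu)$ for $1<q<\infty$ and the weak $(1,1)$ bound are the standard conclusions of that theory on a doubling measure space.

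Third, for a finite product of such fractafolds I would argue the same way after supplying the product analogues of the inputs: the product carries the product measure and the $\ell^{\infty}$-combination of the resistance metrics, which makes it a space of homogeneous type whose measure is Ahlfors-regular of dimension $\sum_{i}d_{i}$; the heat kernel factors, hence inherits product sub-Gaussian estimates; and the product version of Theorem~\ref{thm:smooth_kernel} established in Section~\ref{sec:products} provides a kernel satisfying the corresponding size and $\Delta$-derivative bounds, after which the Calder\'on--Zygmund machinery applies verbatim. The only genuinely substantive point hidden here is the implication \textquotedblleft\eqref{eq:est2} $\Rightarrow$ H\"ormander regularity of $K$\textquotedblright, which is the contribution of \cite{IR_2010}; in the present corollary we merely have to verify that the decay exponents produced by Theorem~\ref{thm:smooth_kernel} match the hypotheses of that result, which they do by construction.
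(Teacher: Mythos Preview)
Your proposal is correct and follows exactly the paper's approach: cite the kernel estimates \eqref{eq:est1} and \eqref{eq:est2} (specifically the case $l=0$, $k=1$) from Theorem~\ref{thm:smooth_kernel} and apply \cite[Theorem~1.1]{IR_2010} to conclude that $p(-\Delta)$ is Calder\'on--Zygmund. The paper's proof consists of that single citation, so your surrounding discussion of $L^{2}$-boundedness, Ahlfors regularity, and how \eqref{eq:est2} yields the H\"ormander regularity is accurate elaboration rather than a different argument.
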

\begin{proof}
Using the estimates \eqref{eq:est1} and \eqref{eq:est2} (with $l=0$
and $k=1$), Theorem 1.1 of \cite{IR_2010} implies that $p(-\Delta)$
is a Calder\'on-Zygmund operator.\end{proof}
\begin{rem}
The boundedness of $p(-\Delta)$ on $L^{q}(\mu)$, $1<q<\infty$,
can be obtained also using the results of \cite{DuOuSi_JFA02}.\end{rem}
\begin{cor}
\label{cor:smooth_kernel_m}If $p\in S^{m}$ then $p(-\Delta)$ is
given by integration with respect to a kernel $K_{p}$ that is smooth
off the diagonal.\end{cor}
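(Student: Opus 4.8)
The plan is to deduce everything from Theorem \ref{thm:smooth_kernel} together with the dyadic machinery used in its proof, and the only real content is localizing to high frequencies. First observe that the case $m\le 0$ is trivial: since $(1+\lambda)^{m/(d+1)}\le 1$ on $(0,\infty)$, the estimate \eqref{eq:symbol} of an $S^0$-symbol follows from the $S^m$-estimate, so $S^m\subseteq S^0$ and the conclusion (in fact with the decay \eqref{eq:est1}--\eqref{eq:est2}) is immediate. So assume $m>0$. Using the cutoff $\eta$ from the proof of Theorem \ref{thm:smooth_kernel}, write $p=p^{0}+p^{\infty}$ with $p^{0}=\eta\, p$ and $p^{\infty}=(1-\eta)p$. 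Because $\lambda\,\frac{d}{d\lambda}$ is a derivation and $p^{0}$ is supported in $\{\lambda\le 2\}$, a Leibniz expansion of $(\lambda\,\frac{d}{d\lambda})^{k}(\eta p)$ using \eqref{eq:symbol} (with $(1+\lambda)^{m/(d+1)}$ bounded there) gives $p^{0}\in S^{0}$; hence $p^{0}(-\Delta)$ has a kernel $K^{0}$ smooth off the diagonal by Theorem \ref{thm:smooth_kernel}. It remains to treat $p^{\infty}(-\Delta)$.

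For the high-frequency piece I would rerun the argument of Theorem \ref{thm:smooth_kernel} verbatim, tracking one extra growth factor. Put $p_{n}(\lambda)=p^{\infty}(\lambda)\delta(2^{-n}\lambda)$, which vanishes for $n<0$; on $\supp p_{n}\subseteq[2^{n-1},2^{n+1}]$ one has $(1+\lambda)^{m/(d+1)}\simeq 2^{nm/(d+1)}$, so \eqref{eq:estimates_pn} is replaced by $|\frac{d^{k}}{d\lambda^{k}}p_{n}(\lambda)|\lesssim 2^{n(m/(d+1)-k)}$. Consequently the rescaled functions $f_{n}(\lambda)=p_{n}(2^{n}\lambda)e^{\lambda}$ (and the functions $g_{n}$ built from $q=\lambda^{l+k}p^{\infty}$ in the treatment of $\Delta_{x}^{l}\Delta_{y}^{k}$) satisfy the same $n$-independent derivative estimates as in the theorem \emph{multiplied by} $2^{nm/(d+1)}$, whence $|\hat f_{n}(\xi)|\lesssim 2^{nm/(d+1)}(1+\xi^{2})^{-j}$ for every $j$, uniformly in $n$. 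Feeding this and the complex heat kernel bound \eqref{eq:heat_ker_est2} into \eqref{eq:kern_pn} exactly as in the theorem yields
\[
|\Delta_{x}^{l}\Delta_{y}^{k}K_{n}(x,y)|\lesssim 2^{\frac{n(m+(l+k)(d+1)+d)}{d+1}}\int\exp\!\Bigl(-c\,R(x,y)^{(d+1)\gamma}2^{n\gamma}(1+\xi^{2})^{-\frac{\gamma+1}{2}}\Bigr)\frac{d\xi}{(1+\xi^{2})^{j}}.
\]
Since $m>0$, the exponent $\alpha:=m+(l+k)(d+1)+d$ is strictly positive, so Lemma \ref{lem:ineq} applies: choosing $j=j(l,k)$ large enough, $\sum_{n}\Delta_{x}^{l}\Delta_{y}^{k}K_{n}(x,y)$ converges, locally uniformly off the diagonal, and is $\lesssim R(x,y)^{-d-m-(l+k)(d+1)}$. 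As this holds for every $l,k$, $K^{\infty}:=\sum_{n}K_{n}$ is smooth off the diagonal; and exactly as at the end of the proof of Theorem \ref{thm:smooth_kernel} (using $p^{\infty}=\sum_{n}p_{n}$ and that for $u\in\mathcal{D}$ only finitely many $p_{n}(-\Delta)u$ are nonzero) it is the kernel of $p^{\infty}(-\Delta)$. Then $K_{p}=K^{0}+K^{\infty}$ is the desired kernel.

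The step I expect to require the most care — though it is mildly delicate rather than genuinely hard — is precisely that one no longer has the uniform $S^0$ bound on the $p_{n}$, so the constants in the rescaled Fourier estimates grow like $2^{nm/(d+1)}$; this is absorbed simply by enlarging $j$ in Lemma \ref{lem:ineq}, which is legitimate because $\hat f_{n}$ decays faster than any polynomial and the kernel $K_{p}$ does not depend on the choice of $j$. No mechanism beyond the summation inequality \eqref{eq:ineq} is needed, and in particular nothing like multiplication by a smooth bump (unavailable in this setting) enters. I would also note that the computation in fact delivers $|\Delta_{x}^{l}\Delta_{y}^{k}K_{p}(x,y)|\lesssim R(x,y)^{-d-m-(l+k)(d+1)}$, which is stronger than the bare off-diagonal smoothness asserted; but for $m>0$ the kernel is genuinely more singular on the diagonal, so there is no analogue of the clean bound \eqref{eq:est1}, which is why the corollary is phrased only in terms of smoothness.
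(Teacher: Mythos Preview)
Your argument is correct, but the paper takes a much shorter route: it simply writes $q(\lambda)=p(\lambda)(1+\lambda)^{-m/(d+1)}\in S^{0}$, invokes Theorem~\ref{thm:smooth_kernel} to get a smooth off-diagonal kernel $K_{q}$, and then sets $K_{p}(x,y)=(I-\Delta_{x})^{m/(d+1)}K_{q}(x,y)$. By contrast you split off the low frequencies and then rerun the entire dyadic machinery of Theorem~\ref{thm:smooth_kernel} with the exponent shifted by $m$, absorbing the extra $2^{nm/(d+1)}$ growth into Lemma~\ref{lem:ineq} by enlarging $\alpha$. Your approach is longer but more self-contained and yields the explicit off-diagonal decay $|\Delta_{x}^{l}\Delta_{y}^{k}K_{p}(x,y)|\lesssim R(x,y)^{-d-m-(l+k)(d+1)}$, which the paper's argument does not state. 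The paper's approach is cleaner in spirit but, as written, requires a word of care: $(I-\Delta_{x})^{m/(d+1)}$ is in general non-local, so one should really take an integer power $(I-\Delta_{x})^{N}$ with $N(d+1)\ge m$ and use the full estimates~\eqref{eq:est2} on $K_{q}$ to justify applying it term by term. Your direct argument sidesteps that subtlety entirely.
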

\begin{proof}
If $p\in S^{m}$ then $q(\lambda)=p(\lambda)(1+\lambda)^{-m}\in S^{0}$
and Theorem \ref{thm:smooth_kernel} implies that $q(-\Delta)$ has
a kernel $K_{q}$ that is smooth off the diagonal. Then
\[
K_{p}(x,y)=(I-\Delta_{x})^{m}K_{q}(x,y)
\]
is smooth off the diagonal and it is the kernel of $p(-\Delta)$.
\end{proof}
As a consequence, we obtain that, for $\operatorname{Re}s\ge0$, the
Bessel potentials
\begin{equation}
(I-\Delta)^{-s}u=\sum_{\lambda\in\Lambda}(1+\lambda)^{-s}P_{\lambda}u,\label{eq:Bessel}
\end{equation}
and the Riesz potentials
\[
(-\Delta)^{-s}u=\sum_{\lambda\in\Lambda}\lambda{}^{-s}P_{\lambda}u,
\]
have smooth kernels and are bounded on $L^{p}(\mu)$, for $1<p<\infty$,
with operator norm of at most polynomial growth in $\operatorname{Im}s$
(when $\operatorname{Re}s=0$). These facts were previously proved in \cite{IR_2010}.

\section{\label{sec:Sobolev-Spaces}Sobolev Spaces}

In this section we assume that $X$ is a compact fractafold without
boundary based on a p.c.f. fractal $K$, an infinite blow-up of $K$
without boundary, or a product of copies of the same fractafold without
boundary. Moreover, we assume that if $X$ is non-compact then $\Delta$
has pure point spectrum. Therefore we can consider that $L^{2}(\mu)$
is spanned by compactly supported eigenfunctions of $-\Delta$. As
before, we fix an orthonormal basis $\{\phi_{n}\}$ of $L^{2}(\mu)$
consisting of such eigenfunctions and we write $\D$ for the space
of finite linear combinations of $\phi_{n}$'s.
\begin{defn}
[$L^2$-Sobolev spaces]\label{def:L2Sobolevspaces} For $s\ge0$ and
$u\in L^{2}$ define
\[
\Vert u\Vert_{H^{s}}^{2}:=\sum_{\lambda\in\Lambda}(1+\lambda)^{\frac{2s}{d+1}}\Vert P_{\lambda}u\Vert_{2}^{2}.
\]
We say that $u\in H^{s}$ if and only if $\Vert u\Vert_{H^{s}}<\infty$.

For $s\ge0$ if $u\in H^{s}$ then $u\in L^{2}(\mu)$. So for $s\ge0$
there is no harm in starting with $u\in L^{2}(\mu)$ in Definition
\ref{def:L2Sobolevspaces}. Moreover if $s_{1}\le s_{2}$ then $H^{s_{2}}\subseteq H^{s_{1}}$.

For $s<0$ we define $H^{s}$ using the distribution theory developed
in \cite{RogStr-2009_distr}. Namely, for $s>0$, $H^{-s}$ is the
dual of $H^{s}$ via
\[
\langle f,\varphi\rangle=\sum\langle f,\phi_{n}\rangle\overline{\langle\varphi,\phi_{n}\rangle}
\]
for $f\in H^{-s}$ and $\varphi\in H^{s}$. The fact that the above
linear functional is bounded is a consequence of the Cauchy-Schwartz
inequality:
\[
\langle f,\varphi\rangle=\sum_{n}\lambda_{n}^{\frac{-s}{d+1}}\langle f,\phi_{n}\rangle\lambda_{n}^{\frac{s}{d+1}}\overline{\langle\varphi,\phi_{n}\rangle}\le\Vert f\Vert_{H^{-s}}\Vert\varphi\Vert_{H^{s}}.
\]
\end{defn}
\begin{lem}
For $s$ a positive integer, $u\in H^{s(d+1)}$ if and only if $(I-\Delta)^{k}u\in L^{2}(\mu)$
for all $k\le s$.\end{lem}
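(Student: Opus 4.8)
The plan is to translate both sides of the equivalence into conditions on the spectral expansion of $u$ and then exploit the fact that these conditions are nested. Recall that for $u$ in the distribution space of \cite{RogStr-2009_distr} (in particular for $u\in L^{2}(\mu)$) the spectral projections $P_{\lambda}u$ are well defined because each $\phi_{n}$ is smooth, and the operator $(I-\Delta)^{k}$ acts as multiplication by $(1+\lambda)^{k}$ on the $\lambda$-eigenspace, so $P_{\lambda}\bigl((I-\Delta)^{k}u\bigr)=(1+\lambda)^{k}P_{\lambda}u$. By mutual orthogonality of the ranges of the $P_{\lambda}$ together with Parseval's identity, the series $\sum_{\lambda\in\Lambda}(1+\lambda)^{k}P_{\lambda}u$ converges in $L^{2}(\mu)$ — equivalently, $(I-\Delta)^{k}u\in L^{2}(\mu)$ — if and only if
\[
\sum_{\lambda\in\Lambda}(1+\lambda)^{2k}\,\Vert P_{\lambda}u\Vert_{2}^{2}<\infty .
\]
On the other hand, Definition \ref{def:L2Sobolevspaces} gives $\Vert u\Vert_{H^{s(d+1)}}^{2}=\sum_{\lambda\in\Lambda}(1+\lambda)^{\frac{2s(d+1)}{d+1}}\Vert P_{\lambda}u\Vert_{2}^{2}=\sum_{\lambda\in\Lambda}(1+\lambda)^{2s}\Vert P_{\lambda}u\Vert_{2}^{2}$, so $u\in H^{s(d+1)}$ is exactly the case $k=s$ of the above series condition.

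With this reduction both implications are short. If $u\in H^{s(d+1)}$, then since $1+\lambda\ge 1$ for every $\lambda\in\Lambda$ we have $(1+\lambda)^{2k}\le(1+\lambda)^{2s}$ for each integer $0\le k\le s$, whence
\[
\sum_{\lambda\in\Lambda}(1+\lambda)^{2k}\Vert P_{\lambda}u\Vert_{2}^{2}\le\sum_{\lambda\in\Lambda}(1+\lambda)^{2s}\Vert P_{\lambda}u\Vert_{2}^{2}=\Vert u\Vert_{H^{s(d+1)}}^{2}<\infty ,
\]
so $(I-\Delta)^{k}u\in L^{2}(\mu)$ for all $k\le s$. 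Conversely, if $(I-\Delta)^{k}u\in L^{2}(\mu)$ for every integer $k\le s$, then the case $k=s$ alone yields $\sum_{\lambda\in\Lambda}(1+\lambda)^{2s}\Vert P_{\lambda}u\Vert_{2}^{2}<\infty$, which says precisely that $\Vert u\Vert_{H^{s(d+1)}}<\infty$, i.e. $u\in H^{s(d+1)}$.

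There is no substantive obstacle here; the only point that deserves a line of justification is the first step — that membership of $(I-\Delta)^{k}u$ in $L^{2}(\mu)$ is genuinely equivalent to finiteness of the squared-norm series $\sum_{\lambda}(1+\lambda)^{2k}\Vert P_{\lambda}u\Vert_{2}^{2}$, which is immediate from the spectral theorem (or, in the pure point spectrum case, from orthonormality of the eigenbasis), and that $P_{\lambda}u$ is meaningful even before one knows $u\in L^{2}(\mu)$, so that the converse really does produce an $L^{2}$ function.
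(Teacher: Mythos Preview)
Your proof is correct and follows essentially the same approach as the paper: both reduce to the spectral identity $\Vert u\Vert_{H^{s(d+1)}}^{2}=\sum_{\lambda\in\Lambda}(1+\lambda)^{2s}\Vert P_{\lambda}u\Vert_{2}^{2}=\Vert(I-\Delta)^{s}u\Vert_{2}^{2}$, from which the equivalence is immediate. The paper's version is terser---it simply records this equality and declares the statement follows---whereas you spell out the monotonicity argument for $k\le s$ and the spectral-theorem justification explicitly.
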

\begin{proof}
Notice that
\[
\Vert u\Vert_{H^{s(d+1)}}^{2}=\sum_{\lambda\in\Lambda}(1+\lambda)^{2s}\Vert P_{\lambda}u\Vert_{2}^{2}=\Vert(I-\Delta)^{s}u\Vert_{2}^{2}.
\]
From this equality the statement follows.\end{proof}
\begin{prop}
If $p\in S^{m}$ then $p(-\Delta):H^{s}\to H^{s-m}$ for all $s$
and $m$.\end{prop}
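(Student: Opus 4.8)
The plan is to reduce the statement to the single multiplier bound obtained by taking $k=0$ in the definition of $S^m$: there is a constant $C_0$ with $|p(\lambda)|\le C_0(1+\lambda)^{m/(d+1)}$ for all $\lambda>0$. This is the only property of $p$ that will be used; in particular no heat-kernel estimate and no smoothness of kernels enters here.

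First I would prove the bound for $u\in\D$, where all sums are finite. Recalling from the proof of Proposition \ref{lem:Symboliccalculus} that $P_\lambda(p(-\Delta)u)=p(\lambda)P_\lambda u$, and expanding in the eigenbasis and grouping terms by eigenvalue,
\begin{align*}
\|p(-\Delta)u\|_{H^{s-m}}^2
&=\sum_{\lambda\in\Lambda}(1+\lambda)^{\frac{2(s-m)}{d+1}}\|P_\lambda(p(-\Delta)u)\|_2^2
=\sum_{\lambda\in\Lambda}(1+\lambda)^{\frac{2(s-m)}{d+1}}|p(\lambda)|^2\|P_\lambda u\|_2^2\\
&\le C_0^2\sum_{\lambda\in\Lambda}(1+\lambda)^{\frac{2(s-m)}{d+1}}(1+\lambda)^{\frac{2m}{d+1}}\|P_\lambda u\|_2^2
=C_0^2\|u\|_{H^s}^2 .
\end{align*}
Hence $p(-\Delta)$ is bounded from $\D$ with the $H^s$ norm into $\D$ with the $H^{s-m}$ norm.

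Next I would extend this to all of $H^s$. For $s\ge0$ the set $\D$ is dense in $H^s$ — the partial sums of the eigenfunction expansion of $u\in H^s$ converge to $u$ in $H^s$ — so $p(-\Delta)$ extends by continuity to a bounded operator $H^s\to H^{s-m}$. For $s<0$ one works inside the distribution theory of \cite{RogStr-2009_distr} used in Definition \ref{def:L2Sobolevspaces}, under which an element of $H^\sigma$ is identified with its sequence of coefficients $\langle u,\phi_n\rangle$ and $\|u\|_{H^\sigma}^2=\sum_n(1+\lambda_n)^{2\sigma/(d+1)}|\langle u,\phi_n\rangle|^2$; one then simply \emph{defines} $p(-\Delta)u$ by $\langle p(-\Delta)u,\phi_n\rangle=p(\lambda_n)\langle u,\phi_n\rangle$, notes that this agrees with the earlier definition on $\D$, and repeats the computation above with $\|P_\lambda u\|_2^2$ replaced by $\sum_{\lambda_n=\lambda}|\langle u,\phi_n\rangle|^2$ to obtain the same estimate. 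Equivalently, since each Bessel potential $(I-\Delta)^t$ is by the very definition of the norms an isometric isomorphism $H^\sigma\to H^{\sigma-t(d+1)}$, one may conjugate by suitable powers of $I-\Delta$ to move both Sobolev exponents into $[0,\infty)$ and then invoke the case already settled together with Proposition \ref{lem:Symboliccalculus}.

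The computation itself is immediate; the only place where care is genuinely required is the last paragraph, namely verifying that $p(-\Delta)$ is well defined on negative-order distributions and that the coefficientwise description is compatible with the definition of $H^s$ for $s<0$. Once that bookkeeping is in place, the proof is complete.
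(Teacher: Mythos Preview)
Your proof is correct and follows essentially the same approach as the paper: the core is the identical computation using $P_\lambda(p(-\Delta)u)=p(\lambda)P_\lambda u$ together with the bound $|p(\lambda)|\le C_0(1+\lambda)^{m/(d+1)}$. The paper's argument is terser and does not separately address density or the $s<0$ case, so your additional care on those points goes beyond what the paper itself writes out.
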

\begin{proof}
Recall that $P_{\lambda}(p(-\Delta_{})u)=p(\lambda)P_{\lambda}u$.
Then
\begin{align*}
\Vert p(-\Delta)u\Vert_{H^{s-m}}^{2} & =\sum_{\lambda\in\Lambda}(1+\lambda)^{\frac{2(s-m)}{d+1}}\Vert P_{\lambda}(p(-\Delta)u)\Vert_{2}^{2}\\
 & =\sum_{\lambda\in\Lambda}(1+\lambda)^{\frac{2(s-m)}{d+1}}\vert p(\lambda)\vert^{2}\Vert P_{\lambda}u\Vert_{2}^{2}\\
 & \lesssim\sum_{\lambda\in\Lambda}(1+\lambda)^{\frac{2s}{d+1}}\Vert P_{\lambda}u\Vert_{2}^{2}=\Vert u\Vert_{H^{s}}^{2}.
\end{align*}
Thus $p(-\Delta)u\in H^{s-m}$ whenever $u\in H^{s}$.\end{proof}
\begin{prop}
\label{lem:domain_Hs}For $s\ge0$, $H^{s}$ equals the image of $L^{2}(\mu)$
under $(I-\Delta)^{-s/(d+1)}$.\end{prop}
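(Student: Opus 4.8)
The plan is to read both sides of the claimed equality through the spectral decomposition $u=\sum_{\lambda\in\Lambda}P_\lambda u$ — available since, in this section, $\Delta$ has pure point spectrum — and to verify the two inclusions by a direct computation with the defining series.

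Since $s\ge 0$, the function $\lambda\mapsto(1+\lambda)^{-s/(d+1)}$ is bounded by $1$ on $[0,\infty)$, so $(I-\Delta)^{-s/(d+1)}$, i.e.\ $p(-\Delta)$ with $p(\lambda)=(1+\lambda)^{-s/(d+1)}$, is a bounded, self-adjoint, injective operator on $L^2(\mu)$ acting by $(I-\Delta)^{-s/(d+1)}v=\sum_{\lambda\in\Lambda}(1+\lambda)^{-s/(d+1)}P_\lambda v$. For the inclusion of the image in $H^s$: given $v\in L^2(\mu)$, put $u=(I-\Delta)^{-s/(d+1)}v$, so that $P_\lambda u=(1+\lambda)^{-s/(d+1)}P_\lambda v$; substituting into Definition \ref{def:L2Sobolevspaces} gives $\|u\|_{H^s}^2=\sum_{\lambda\in\Lambda}(1+\lambda)^{2s/(d+1)}(1+\lambda)^{-2s/(d+1)}\|P_\lambda v\|_2^2=\|v\|_2^2<\infty$, so $u\in H^s$, indeed with $\|u\|_{H^s}=\|v\|_2$.

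For the reverse inclusion, given $u\in H^s$ define $v:=\sum_{\lambda\in\Lambda}(1+\lambda)^{s/(d+1)}P_\lambda u$. The hypothesis $u\in H^s$ says precisely that the partial sums of this series form a Cauchy sequence in $L^2(\mu)$ (the $P_\lambda$ being mutually orthogonal projections), so $v\in L^2(\mu)$ with $\|v\|_2=\|u\|_{H^s}$. Applying the bounded operator $(I-\Delta)^{-s/(d+1)}$ term by term then yields $(I-\Delta)^{-s/(d+1)}v=\sum_{\lambda\in\Lambda}(1+\lambda)^{-s/(d+1)}(1+\lambda)^{s/(d+1)}P_\lambda u=\sum_{\lambda\in\Lambda}P_\lambda u=u$, so $u$ lies in the image. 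Combining the two inclusions proves the proposition; the argument in fact shows that $(I-\Delta)^{-s/(d+1)}$ is an isometric isomorphism of $L^2(\mu)$ onto $H^s$.

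There is no serious obstacle here, since everything reduces to manipulating the orthogonal expansion of $u$. The one point deserving a word of care is that the operator $(I-\Delta)^{s/(d+1)}$ used implicitly above is unbounded on $L^2(\mu)$, so one should say explicitly that it is defined on $H^s$ by the displayed series, that convergence of that series is exactly the condition $u\in H^s$, and that $(I-\Delta)^{-s/(d+1)}$ is a genuine two-sided inverse to it on the relevant domains — all immediate from the spectral calculus and the mutual orthogonality of the $P_\lambda$.
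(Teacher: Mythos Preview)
Your proof is correct and follows essentially the same approach as the paper: both directions are handled by direct computation with the spectral decomposition, showing first that $(I-\Delta)^{-s/(d+1)}$ maps $L^2(\mu)$ into $H^s$ isometrically, and then that any $u\in H^s$ is the image of $v=\sum_\lambda(1+\lambda)^{s/(d+1)}P_\lambda u\in L^2(\mu)$. Your version is slightly more careful in spelling out the convergence of the defining series for $v$, but the argument is the same.
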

\begin{proof}
Let $u,f\in L^{2}(\mu)$ such that $u=(I-\Delta)^{-s/(d+1)}f$. Then
\[
P_{\lambda}u=\frac{1}{(1+\lambda)^{s/(d+1)}}P_{\lambda}f.
\]
Therefore
\[
\Vert u\Vert_{H^{s}}^{2}=\sum_{\lambda\in\Lambda}(1+\lambda)^{\frac{2s}{d+1}}\frac{1}{(1+\lambda)^{2s/(d+1)}}\Vert P_{f}f\Vert_{2}^{2}=\Vert f\Vert_{2}^{2}<\infty.
\]
Now let $u\in H^{s}$ and define $f=(I-\Delta)^{s/(d+1)}u$, that
is
\[
f=\sum(1+\lambda)^{\frac{s}{d+1}}P_{\lambda}u.
\]
Then
\begin{align*}
\Vert f\Vert_{2}^{2} & =\sum_{\lambda\in\Lambda}(1+\lambda)^{\frac{2s}{d+1}}\Vert P_{\lambda}u\Vert_{2}^{2}=\Vert u\Vert_{H^{s}}^{2}<\infty.
\end{align*}
 So $f\in L^{2}(\mu)$ and $u=(I-\Delta)^{-s/(d+1)}f$.
\end{proof}
We extend next the definition of Sobolev spaces to $L^{p}$ spaces,
using Proposition \ref{lem:domain_Hs} as the starting point. Namely,
we are going to replace $L^{2}(\mu)$ with $L^{p}(\mu)$ (see \cite{Ste_PMS30_70}).
\begin{defn}
We define the $L^{p}$ Sobolev spaces $L_{s}^{p}$ for $s\ge0$ and
$1<p<\infty$ to be the images of $L^{p}$ under $(I-\Delta)^{-s/(d+1)}$
with the norm
\[
\Vert(I-\Delta)^{-s/(d+1)}f\Vert_{L_{s}^{p}}=\Vert f\Vert_{L^{p}(\mu)}.
\]

\end{defn}
Note that by Corollary \ref{pro:Prop_boundedness} we may regard $L_{s}^{p}$
as a closed subspace of $L^{p}$, with $\Vert u\Vert_{p}\le c\Vert u\Vert_{L_{s}^{p}}$.
For $s=0$ we have $L_{0}^{p}=L^{p}$ and for $p=2$ we have $L_{s}^{2}=H^{s}$.
\begin{prop}
If $0<s_{0}\le s_{1}<\infty$ and $1<p_{0},p_{1}<\infty$ then the
complex interpolation space $\bigl[L_{s_{0}}^{p_{0}},L_{s_{1}}^{p_{1}}\bigr]_{\theta}$
may be identified with $L_{s}^{p}$ where $0<\theta<1$, $s=(1-\theta)s_{0}+\theta s_{1}$,
and $\frac{1}{p}=\frac{1-\theta}{p_{0}}+\frac{\theta}{p_{1}}$.\end{prop}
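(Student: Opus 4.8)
The plan is to realize the complex interpolation of $L^p$ Sobolev spaces as a consequence of the complex interpolation theorem for $L^p$ spaces together with the definition of $L^p_s$ as the image of $L^p$ under the Bessel-type operator $(I-\Delta)^{-s/(d+1)}$. The key observation is that $L^p_s$ is, by construction, an isometric copy of $L^p$ via the map $J_s := (I-\Delta)^{-s/(d+1)}$, so interpolating the spaces $L^p_s$ should be equivalent to interpolating $L^p$ after conjugating by an appropriate family of such operators. Concretely, I would introduce the analytic family $T_z = (I-\Delta)^{-(s(z))/(d+1)}$ where $s(z)$ interpolates linearly between $s_0$ and $s_1$ in a strip, and use the fact (from the Bessel potential discussion after Corollary \ref{cor:smooth_kernel_m}, and Corollary \ref{pro:Prop_boundedness}) that on the imaginary axis $(I-\Delta)^{i\tau}$ is bounded on every $L^p$, $1<p<\infty$, with operator norm of at most polynomial growth in $\tau$.

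The steps I would carry out are as follows. First, fix the notation: set $s = (1-\theta)s_0 + \theta s_1$ and $\frac1p = \frac{1-\theta}{p_0} + \frac{\theta}{p_1}$ as in the statement. Second, recall the classical Stein–Weiss / Calderón complex interpolation identity $\bigl[L^{p_0}, L^{p_1}\bigr]_\theta = L^p$ with equality of norms. Third, consider the operator $A = (I-\Delta)^{(s_1 - s_0)/(d+1)}$ extended analytically: the family $z \mapsto (I-\Delta)^{(z(s_1-s_0))/(d+1)}$ maps the strip analytically, and at $\operatorname{Re} z = 0$ and $\operatorname{Re} z = 1$ it differs from a bounded operator only by a purely imaginary power, which is bounded on all $L^p$ with polynomially bounded norm. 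Fourth, use this analytic family as the conjugating family in the interpolation: since $J_{s_j}$ is an isometry from $L^{p_j}$ onto $L^{p_j}_{s_j}$ for $j=0,1$, the interpolation functor applied to the diagram yields $\bigl[L^{p_0}_{s_0}, L^{p_1}_{s_1}\bigr]_\theta = J_s \bigl[L^{p_0}, L^{p_1}\bigr]_\theta = J_s L^p = L^p_s$, where the middle equality is precisely the statement that an analytic family of isometries (up to polynomially bounded imaginary powers, which do not affect the interpolation space, only renorming equivalently) intertwines the interpolation functors. Fifth, check that the norms are equivalent rather than merely that the spaces coincide as sets, which follows from tracking the polynomial bounds through the three-lines lemma in the proof of the interpolation theorem with analytic families.

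I expect the main obstacle to be the careful handling of the imaginary powers $(I-\Delta)^{i\tau}$ inside the analytic interpolation argument. These are not isometries — only bounded with polynomially growing norm in $\tau$ — so one cannot simply say "$L^p_s$ is isometric to $L^p$ and interpolation is functorial." One must instead invoke the version of Stein's interpolation theorem that allows analytic families of operators whose boundary values grow at most polynomially (equivalently, sub-exponentially) on the edges of the strip, and verify that conjugating the identity map by such a family produces a Banach space isomorphism onto the expected interpolation space with equivalent norm. This requires knowing that $(I-\Delta)^{i\tau}$ is $L^p$-bounded with the stated polynomial growth, which is exactly what was recorded in the paragraph following Corollary \ref{cor:smooth_kernel_m}; the remaining work is the standard, if slightly technical, bookkeeping in the three-lines argument. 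The rest — linearity of the exponents, the $L^p$ interpolation identity, and the identification $J_s L^p = L^p_s$ — is routine given the earlier results of this section.
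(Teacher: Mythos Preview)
Your proposal is correct and follows essentially the same approach as the paper. The paper's own proof consists of a single sentence referring to the Euclidean argument together with the comments after Corollary~\ref{cor:smooth_kernel_m} (i.e., the $L^{q}$-boundedness of $(I-\Delta)^{i\tau}$ with polynomial growth in $\tau$); you have correctly identified these as the key ingredients and fleshed out the standard analytic-family interpolation argument in more detail than the paper itself provides.
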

\begin{proof}
The proof follows as in the Euclidean case using the comments following
Proposition \ref{pro:Prop_boundedness}.\end{proof}
\begin{prop}
\label{pro:bound_L_p_s}If $p\in S^{m}$ then $p(-\Delta)$ is a bounded
operator from $L_{s}^{p}$ into $L_{s-m}^{p}$.\end{prop}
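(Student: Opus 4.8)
The plan is to reduce the statement to the $L^{p}$-boundedness of a zeroth-order operator, which is already in hand through Corollary \ref{pro:Prop_boundedness}. Write $u=(I-\Delta)^{-s/(d+1)}f$ with $f\in L^{p}(\mu)$ and $\|f\|_{L^{p}}=\|u\|_{L^{p}_{s}}$; then the assertion that $p(-\Delta)u\in L^{p}_{s-m}$ with the desired norm bound is precisely the assertion that
\[
T:=(I-\Delta)^{(s-m)/(d+1)}\circ p(-\Delta)\circ(I-\Delta)^{-s/(d+1)}
\]
is bounded on $L^{p}(\mu)$, since by the definition of the $L^{p}$ Sobolev norms $\|p(-\Delta)u\|_{L^{p}_{s-m}}=\|Tf\|_{L^{p}}$. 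Because all three factors are functions of the self-adjoint operator $-\Delta$, on the dense set $\D$ they commute and their composite equals $q(-\Delta)$, where
\[
q(\lambda)=(1+\lambda)^{(s-m)/(d+1)}\,p(\lambda)\,(1+\lambda)^{-s/(d+1)}=(1+\lambda)^{-m/(d+1)}p(\lambda).
\]

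First I would verify that $q\in S^{0}$. A direct computation shows $(1+\lambda)^{-m/(d+1)}\in S^{-m}$: each application of $\lambda\frac{d}{d\lambda}$ contributes a bounded factor comparable to $\lambda/(1+\lambda)$ while preserving the weight $(1+\lambda)^{-m/(d+1)}$, and this persists under iteration. Multiplying by $p\in S^{m}$ and invoking the symbolic calculus of Proposition \ref{lem:Symboliccalculus} gives $q\in S^{0}$, and simultaneously identifies the composition above as $q(-\Delta)$ (equivalently, since $p(-\Delta)=(I-\Delta)^{m/(d+1)}q(-\Delta)$ by construction, the powers of $I-\Delta$ in $T$ cancel). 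As $X$ is here a fractafold without boundary or a product of such, Corollary \ref{pro:Prop_boundedness} applies and shows that $q(-\Delta)$ extends to a bounded operator on $L^{p}(\mu)$ for every $1<p<\infty$.

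It then remains to assemble the estimate. Since $q(-\Delta)f\in L^{p}(\mu)$ with $\|q(-\Delta)f\|_{L^{p}}\lesssim\|f\|_{L^{p}}$, and since $p(-\Delta)u=(I-\Delta)^{-(s-m)/(d+1)}q(-\Delta)f$ — an identity valid on $\D$ that extends by density once one knows the relevant operators are $L^{p}$-bounded (which requires $s\ge\max(0,m)$, automatically true for $L^{p}_{s}$ and $L^{p}_{s-m}$ to be defined) — we conclude that $p(-\Delta)u\in L^{p}_{s-m}$ with
\[
\|p(-\Delta)u\|_{L^{p}_{s-m}}=\|q(-\Delta)f\|_{L^{p}}\lesssim\|f\|_{L^{p}}=\|u\|_{L^{p}_{s}}.
\]
The one point demanding genuine care — and the main, if modest, obstacle — is the domain bookkeeping: checking that the a priori densely defined operator $T$ really is the restriction of the bounded operator $q(-\Delta)$, so that $p(-\Delta)u$ indeed lies in $L^{p}_{s-m}$, i.e.\ in the image of $(I-\Delta)^{-(s-m)/(d+1)}$ acting on all of $L^{p}(\mu)$. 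This is handled just as in the classical Bessel-potential theory (cf.\ \cite{Ste_PMS30_70}) once the zeroth-order $L^{p}$-boundedness is available, so no genuinely new idea is required beyond those already developed in the earlier sections.
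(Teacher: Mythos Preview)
Your argument is correct and is essentially the paper's own proof: write $u=(I-\Delta)^{-s/(d+1)}f$, observe via Proposition~\ref{lem:Symboliccalculus} that the conjugated operator has symbol $(1+\lambda)^{-m/(d+1)}p(\lambda)\in S^{0}$, and invoke Corollary~\ref{pro:Prop_boundedness}. The paper presents this in three lines without spelling out the $S^{0}$ verification or the domain bookkeeping you flag, but the content is the same.
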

\begin{proof}
Let $f\in L_{s}^{p}$. Then there is $g\in L^{p}$ such that $f=(I-\Delta)^{-s/(d+1)}g$.
Then $p(-\Delta)f=p(-\Delta)(I-\Delta)^{-s/(d+1)}g$. By Proposition
\ref{lem:Symboliccalculus} and Corollary \ref{pro:Prop_boundedness}
\[
(I-\Delta)^{(s-m)/(d+1)}p(-\Delta)(I-\Delta)^{-s/(d+1)}g\in L^{p}.
\]
This is equivalent with $p(-\Delta)f=(I-\Delta)^{-(s-m)/(d+1)}g\in L_{s-m}^{p}$.
 \end{proof}
\begin{lem}
The Sobolev space $L_{d+1}^{p}$ equals $dom_{L^{p}}(\Delta)$.\end{lem}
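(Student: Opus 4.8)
The plan is to show the two inclusions $L^p_{d+1} \subseteq \dom_{L^p}(\Delta)$ and $\dom_{L^p}(\Delta) \subseteq L^p_{d+1}$ separately, using the symbolic calculus (Proposition \ref{lem:Symboliccalculus}) together with the $L^p$-boundedness of order-zero operators (Corollary \ref{pro:Prop_boundedness}). The key observation is that $\lambda \mapsto \lambda(1+\lambda)^{-1}$ and $\lambda \mapsto (1+\lambda)\lambda^{-1}$ (away from $0$) give rise to order-zero symbols after the appropriate normalization, so that $-\Delta$ and $(I-\Delta)$ differ by bounded invertible operators on the relevant ranges; one must be slightly careful near $\lambda = 0$, but since $0$ is an isolated eigenvalue (or absent) this is only a finite-rank adjustment as discussed in Section \ref{sec:background}.

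First I would establish $L^p_{d+1} \subseteq \dom_{L^p}(\Delta)$. Let $u \in L^p_{d+1}$, so by definition $u = (I-\Delta)^{-1}g$ for some $g \in L^p$. Then $\Delta u = \Delta(I-\Delta)^{-1}g = -g + (I-\Delta)^{-1}g = -g + u$, and since both $g$ and $u$ lie in $L^p$, we conclude $\Delta u \in L^p$, i.e. $u \in \dom_{L^p}(\Delta)$; moreover this also shows $L^p_{d+1}$ carries the graph norm of $\Delta$ up to equivalence. Conversely, suppose $u \in \dom_{L^p}(\Delta)$, meaning $u \in L^p$ and $\Delta u = f \in L^p$. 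Set $g := (I - \Delta)u = u - f \in L^p$. The claim is that $u = (I-\Delta)^{-1}g$, which would place $u \in L^p_{d+1}$. Here the content is that $(I-\Delta)^{-1}$ really does invert $(I-\Delta)$ on $\dom_{L^p}(\Delta)$: this is immediate spectrally on the $L^2$ level and on $\D$, and extends by the $L^p$ boundedness of $(I-\Delta)^{-1}$ (which is the Bessel potential $(I-\Delta)^{-s/(d+1)}$ at $s = d+1$, bounded on $L^p$ by the comments following Corollary \ref{pro:Prop_boundedness}) together with a density argument.

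The main obstacle is the density/closability bookkeeping in the converse inclusion: one needs to know that $\dom_{L^p}(\Delta)$, as defined via the weak formulation and the requirement $f \in L^p$, is exactly the set on which the spectrally-defined operator $(I-\Delta)^{-1}$ acts as a genuine two-sided inverse of $u \mapsto u - \Delta u$. I would handle this by first verifying the identity $(I-\Delta)^{-1}(I-\Delta)u = u$ for $u \in \D$ (where everything is a finite spectral sum and the identity is trivial), then observing that if $u \in \dom_{L^p}(\Delta)$ then $u$ and $(I-\Delta)u$ are both in $L^p \subseteq L^2$ when $X$ is compact, or more generally can be approximated appropriately; applying the bounded operator $(I-\Delta)^{-1}$ on $L^p$ to the $L^p$-convergent approximation of $(I-\Delta)u$ and using closedness of $\Delta$ recovers $u = (I-\Delta)^{-1}((I-\Delta)u)$. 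Once both inclusions are in hand, the norm equivalence $\Vert u\Vert_{L^p_{d+1}} = \Vert (I-\Delta)u\Vert_{L^p} \sim \Vert u\Vert_{L^p} + \Vert \Delta u\Vert_{L^p}$ follows from the triangle inequality and the $L^p$-boundedness of $(I-\Delta)^{-1}$, completing the identification.
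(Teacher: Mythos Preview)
Your strategy is correct in outline, but it sidesteps the actual content of the lemma. The point of the result is to identify the \emph{spectrally} defined Sobolev space $L^p_{d+1}$ (the image of $L^p$ under the operator $(I-\Delta)^{-1}$, which is built from the spectral resolution) with the \emph{energy-form} domain $\dom_{L^p}(\Delta)$ (the set of $u\in\dom\mathcal{E}$ for which there exists $f\in L^p$ with $\mathcal{E}(u,v)=-\int fv\,d\mu$ for all test $v$). Your argument never touches the energy form. When you write ``$\Delta u=\Delta(I-\Delta)^{-1}g=-g+u$'' and conclude $u\in\dom_{L^p}(\Delta)$, you are treating $\Delta$ as a spectral multiplier and asserting that the resulting $L^p$ function is the weak-form Laplacian of $u$; but that identification is precisely what needs to be proved, and for $g\in L^p\setminus L^2$ it is not supplied by the $L^2$ spectral theorem. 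The same issue arises in your converse direction, which you partially flag as ``bookkeeping'' but do not actually carry out.

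The paper's proof handles this directly by testing against eigenfunctions. For $u=(I-\Delta)^{-1}g$ it computes both $\int f\phi_\lambda\,d\mu$ and $\mathcal{E}(u,\phi_\lambda)$ explicitly (using $\mathcal{E}(\phi_{\lambda_1},\phi_{\lambda_2})=\lambda_1\delta(\lambda_1,\lambda_2)$) and checks they agree, then appeals to density of the eigenfunctions in $\dom\mathcal{E}$ to obtain the weak formulation $-\Delta u=f$. For the other inclusion it tests the weak identity $\mathcal{E}(u,v)=-\int fv\,d\mu$ against $v=\phi_\lambda$ to get $\lambda\int u\phi_\lambda\,d\mu=\int f\phi_\lambda\,d\mu$, which is exactly the coefficient relation needed to conclude $u=(I-\Delta)^{-1}(f+u)$. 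This is both shorter and more honest than the density/closability route you sketch: rather than invoking closedness of $\Delta$ on $L^p$ (which would itself require justification), it works entirely with the pairing against the basis $\{\phi_\lambda\}$, where all integrals make sense because the $\phi_\lambda$ are bounded.
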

\begin{proof}
Let $u\in dom_{L^{p}}(\Delta)$. Then $u\in dom(\mathcal{E})$ and
there is $f\in L^{p}$ such that $f=-\Delta u$. That is
\[
\int fvd\mu=\mathcal{E}(u,v)\;\mbox{for all }v\in dom(\mathcal{E}).
\]
If $v=\phi_{\lambda}$, where $\phi_{\lambda}$ is any eigenfunction
corresponding to $\lambda$, then $\lambda\int u\phi_{\lambda}d\mu=\int f\phi_{\lambda}d\mu$.
This implies that $(I-\Delta)^{-1}(f+u)=u$.

For the converse, notice first that if $\lambda_{1},\lambda_{2}\in\Lambda$
and $\phi_{\lambda_{i}}$ is any eigenfunction corresponding to $\lambda_{i}$,
$i=1,2$, then $\mathcal{E}(\phi_{\lambda_{1}},\phi_{\lambda_{2}})=\lambda_{1}\delta(\lambda_{1},\lambda_{2})$.
Let $u=(I-\Delta)^{-1}g$ for some $g\in L^{p}$. Set $f=g-u\in L^{p}$.
For any $\lambda\in\Lambda$ we have
\[
\int f\phi_{\lambda}d\mu=\int(g-u)\phi_{\lambda}d\mu=\frac{\lambda}{\lambda+1}\int g\phi_{\lambda}d\mu.
\]
Also
\[
\mathcal{E}(u,\phi_{\lambda})=\frac{1}{1+\lambda}\int g\phi_{\lambda}d\mu\,\mathcal{E}(\phi_{\lambda},\phi_{\lambda})=\frac{\lambda}{1+\lambda}\int g\phi_{\lambda}d\mu.
\]
It follows by linearity and density that $\int fvd\mu=\mathcal{E}(u,v)$
for all $v\in dom\mathcal{E}$. Thus $-\Delta u=f$.\end{proof}
\begin{thm}
[Sobolev embedding theorem]If $s<\frac{d}{p}$ then $L_{s}^{p}\subseteq L^{q}$
for $\frac{1}{q}=\frac{1}{p}-\frac{s}{d}$.\end{thm}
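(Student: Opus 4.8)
The plan is to reduce the statement to an $L^{p}\to L^{q}$ mapping property of a Bessel potential and to prove that property by a Hardy--Littlewood--Sobolev argument, with the heat kernel bound \eqref{eq:heat_estimates} supplying the needed control on the kernel of the potential. By definition $u\in L^{p}_{s}$ means $u=(I-\Delta)^{-s/(d+1)}f$ for some $f\in L^{p}(\mu)$, and since $(1+\lambda)^{-s/(d+1)}$ is an $S^{0}$ symbol, Corollary~\ref{pro:Prop_boundedness} already gives $u\in L^{p}(\mu)$ with $\|u\|_{p}\lesssim\|f\|_{p}$. Hence it suffices to prove that $(I-\Delta)^{-s/(d+1)}$ maps $L^{p}(\mu)$ boundedly into $L^{q}(\mu)$. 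The case $s=0$ is trivial ($q=p$, $L^{p}_{0}=L^{p}$), so assume $0<s<d/p$; since $p>1$ this forces $s<d$.

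First I would subordinate the Bessel potential to the heat semigroup. Since $-\Delta\ge 0$, the spectrum of $I-\Delta$ lies in $[1,\infty)$ and
\[
(I-\Delta)^{-s/(d+1)}=\frac{1}{\Gamma\bigl(s/(d+1)\bigr)}\int_{0}^{\infty}t^{\,s/(d+1)-1}e^{-t}\,e^{t\Delta}\,dt,
\]
so that $(I-\Delta)^{-s/(d+1)}$ is integration against the kernel
\[
G_{s}(x,y)=\frac{1}{\Gamma\bigl(s/(d+1)\bigr)}\int_{0}^{\infty}t^{\,s/(d+1)-1}e^{-t}\,h_{t}(x,y)\,dt.
\]
I would then insert \eqref{eq:heat_estimates} (recall $\beta=d/(d+1)$) and substitute $t=R(x,y)^{d+1}u$: the exponential becomes $e^{-c\,u^{-\gamma}}$ and the power of $t$ produces the prefactor $R(x,y)^{s-d}$. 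For $R(x,y)\le 1$ one drops $e^{-t}\le 1$ and the remaining $u$-integral converges --- near $u=0$ because of the super-exponential factor, near $u=\infty$ because $s<d$ --- so $|G_{s}(x,y)|\lesssim R(x,y)^{s-d}$ there; for $R(x,y)\ge 1$ the surviving factor $e^{-t}$ gives $|G_{s}(x,y)|\lesssim\exp\bigl(-c\,R(x,y)^{(d+1)\gamma/(\gamma+1)}\bigr)\lesssim R(x,y)^{s-d}$. Thus $|G_{s}(x,y)|\lesssim R(x,y)^{s-d}$ for all $x\ne y$; in particular $G_{s}(x,\cdot)\in L^{1}(\mu)$ uniformly in $x$, and a Fubini computation against the eigenbasis $\mathcal{D}$ confirms that integration against $G_{s}$ agrees with the spectrally defined operator.

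It then remains to prove the fractional-integration estimate: the operator with kernel $R(x,y)^{s-d}$ maps $L^{p}(\mu)$ into $L^{q}(\mu)$ when $1/q=1/p-s/d$. I would do this by Hedberg's device. Splitting $\int R(x,y)^{s-d}|f(y)|\,d\mu(y)$ at a radius $\rho$, a dyadic-annulus decomposition together with the volume bound $\mu(B(x,r))\lesssim r^{d}$ bounds the part over $\{R(x,y)<\rho\}$ by $C\rho^{s}Mf(x)$, where $M$ is the Hardy--Littlewood maximal operator, while H\"older's inequality bounds the part over $\{R(x,y)\ge\rho\}$ by $C\rho^{\,s-d/p}\|f\|_{p}$ (the integral $\int_{R(x,y)\ge\rho}R(x,y)^{(s-d)p'}\,d\mu(y)$ converges precisely because $s<d/p$). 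Optimizing in $\rho$ gives the pointwise bound $\bigl|(I-\Delta)^{-s/(d+1)}f(x)\bigr|\lesssim Mf(x)^{1-sp/d}\,\|f\|_{p}^{sp/d}$; raising to the power $q$, integrating, and using that $M$ is bounded on $L^{p}(\mu)$ for $p>1$ on a doubling space (the maximal theorem on spaces of homogeneous type) yields $\|(I-\Delta)^{-s/(d+1)}f\|_{q}\lesssim\|f\|_{p}$, the exponent arithmetic collapsing via $(1-sp/d)q=p$ to the stated relation among $p$, $q$ and $s$. For products of fractafolds one argues identically, using the product heat kernel and dimension of Section~\ref{sec:products}; alternatively, once $|G_{s}(x,y)|\lesssim R(x,y)^{s-d}$ is in hand one may simply quote the Hardy--Littlewood--Sobolev theorem on spaces of homogeneous type.

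I expect the real content to be in the second paragraph. The abstract hypotheses \eqref{eq:double}--\eqref{eq:heat_estimates} must be supplemented by the fact that, for the fractafolds (and their products) considered in this section, $\mu$ is Ahlfors $d$-regular with respect to the resistance metric, so that $\mu(B(x,r))\lesssim r^{d}$; and it is precisely the on-diagonal exponent $\beta=d/(d+1)$ that makes subordination reproduce the Euclidean-type singularity $R(x,y)^{s-d}$. Granting these, everything else is the classical fractional-integration machinery, the only additional care being the routine justification --- via the estimates just obtained --- of the subordination identity and of the interchange of the $t$-integral with the spectral resolution.
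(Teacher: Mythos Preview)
Your argument is correct. The paper itself gives no proof beyond a reference to \cite[Theorem~3.11]{Str_JFA03}, and the argument there follows the route you outline: subordinate the Bessel potential to the heat semigroup, use \eqref{eq:heat_estimates} to obtain $|G_{s}(x,y)|\lesssim R(x,y)^{s-d}$, and then invoke Hardy--Littlewood--Sobolev (equivalently Hedberg's maximal-function inequality) on a space of homogeneous type; the Ahlfors upper regularity $\mu(B(x,r))\lesssim r^{d}$ that you correctly flag as the one extra input is indeed available for the fractafolds considered in Section~\ref{sec:Sobolev-Spaces}.
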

\begin{proof}
We have all the ingredients needed to use the same proof as in \cite[Theorem 3.11]{Str_JFA03}.
\end{proof}

\section{\label{sec:products}Pseudo-differential operators on Products of
Fractals}

\global\long\def\proj#1{P_{\lambda_{1}}\otimes P_{\lambda_{2}}\dots\otimes P_{\lambda_{N}}#1}
In this section we extend the definition of pseudo-differential operators
to products of metric measure spaces that satisfy the hypothesis of
Section \ref{sec:background}. Let $N\ge2$ be fixed and let $(X_{1},R_{1}),\dots,(X_{N},R_{N})$
be $N$ metric spaces such that $X_{i}$ has measure $\mu_{i}$ and
 Laplacian $\Delta_{i}$. In case that $X_{i}$ is a fractafold, then
we consider that $\Delta_{i}$ is defined using a self-similar Dirichlet
energy $\E_{i}$. We assume that the heat kernel $h^{(i)}$ associated
to $\Delta_{i}$ satisfies the estimates \eqref{eq:heat_estimates}
with $d=d_{i}$, $\gamma=\gamma_{i}$, $R=R_{i}$, for all $i=1,\dots,N$.
Let $X=X_{1}\times\dots\times X_{N}$ be the product space and let
$\mu=\mu_{1}\times\dots\times\mu_{N}$ be the product measure on $X$.
We write $x=(x_{1},\dots,x_{n})$ for elements in $X$, $\lambda=(\lambda_{1},\dots,\lambda_{N})$
for elements in $(0,\infty)^{N}$ and $\Delta=(\Delta_{1},\dots,\Delta_{N})$.
Recall that there is a unique spectral decomposition $P_{\lambda}$
such that
\begin{equation}
P_{\lambda}u_{1}\times u_{2}\times\dots\times u_{N}=\proj{u_{1}\times\dots\times u_{N}},\label{eq:spectral_proj_mv}
\end{equation}
where $u_{1}\times\dots\times u_{n}(x)=u_{1}(x_{1})u_{2}(x_{2})\dots u_{N}(x_{N})$
and
\[
\proj{u_{1}(x_{1})u_{2}(x_{2})\dots u_{N}(x_{N})}=P_{\lambda_{1}}u_{1}(x_{1})P_{\lambda_{2}}u_{2}(x_{2})\dots P_{\lambda_{N}}u_{N}(x_{N}),
\]
$P_{\lambda_{i}}$ is the spectral projection corresponding to the
eigenvalue $\lambda_{i}$ of $-\Delta_{i}$.

Recall from \cite{Str:TAM04} that if $\{\phi_{n}^{i}\}$ is an orthonormal
basis for $L^{2}(\mu_{i})$ consisting of compactly supported eigenfunctions of $-\Delta_{i}$,
then
\[
\phi_{k_{1},\dots,k_{N},}(x)=\phi_{k_{1}}^{1}(x_{1})\cdot\dots\cdot\phi_{k_{N}}^{N}(x_{N}),
\]
$k_{i}\in\mathbb{Z}$ for all $i=1,\dots,N$, form an orthonormal
basis for $L^{2}(\mu)$. In this case we write $\mathcal{D}^{N}$
be the set of finite linear combinations of $\{\phi_{k_{1},\dots,k_{N}}\}$.
\begin{defn}
\label{def:psdo_cc_prod}For $m\in\mathbb{Z}$ we define the symbol
class $S^{m}$ on $X$ to consist of the set of smooth functions $p:(0,\infty)^{N}\to\mathbb{C}$
that satisfy the property that for every $\alpha=(\alpha_{1},\dotsm,\alpha_{N})\in\mathbb{N}^{N}$
there is a positive constant $C_{\alpha}$ such that
\begin{equation}
\lambda{}^{\alpha}\left\vert \frac{\partial^{\alpha}}{\partial\lambda^{\alpha}}p(\lambda)\right\vert \le C_{\alpha}(1+\lambda)^{\frac{m}{d+1}},\label{eq:symbol_mul_m}
\end{equation}
where $\lambda^{\alpha}=\lambda_{1}^{\alpha_{1}}\lambda_{2}^{\alpha_{2}}\cdots\lambda_{N}^{\alpha_{N}}$,
and
\[
\frac{\partial^{\alpha}}{\partial\lambda^{\alpha}}p(\lambda)=\frac{\partial^{\alpha_{1}}}{\partial\lambda_{1}^{\alpha_{1}}}\dots\frac{\partial^{\alpha_{N}}}{\partial\lambda_{N}^{\alpha_{N}}}p(\lambda_{1},\dots,\lambda_{N}).
\]
\end{defn}
\begin{rem}
Note that each $\lambda_{i}$ is positive and we omit the absolute
values of $\lambda$ in the above definition. Notice also that if
$p$ satisfies the condition \eqref{eq:symbol_mul_m} then it satisfies
also the condition
\begin{equation}
\vert\lambda\vert^{\vert\alpha\vert}\left\vert \frac{\partial^{\alpha}}{\partial\lambda^{\alpha}}p(\lambda)\right\vert \le C_{\alpha}(1+\lambda)^{\frac{m}{d+1}},\label{eq:symbol_mul_alt}
\end{equation}
where $\vert\lambda\vert=\sqrt{\lambda_{1}^{2}+\dots+\lambda_{N}^{2}}$
if $\lambda\in(0,\infty)^{N}$, and $\vert\alpha\vert=\alpha_{1}+\dots+\alpha_{N}$,
if $\alpha\in\mathbb{N}^{N}$. Condition \eqref{eq:symbol_mul_m}
is usually called the Marcinkiewicz condition while
\eqref{eq:symbol_mul_alt} is the H\"ormander condition.
Many authors define psuedo-differential operators on Euclidean spaces
or manifolds using the H\"ormander condition. We chose to use Marcinkiewicz
condition in the definition because it is more general and it makes
the proof of the main theorem in this section more transparent.
\end{rem}
If $p:(0,\infty)^{N}\to\mathbb{C}$ is a bounded Borel function then
we can define an operator $p(-\Delta)$ acting on $L^{2}(\mu)$ via
\begin{equation}
p(-\Delta)u=\int_{(0,\infty)^{N}}p(\lambda)P_{\lambda}(u),\label{eq:psdo_mv}
\end{equation}
where $P_{\lambda}$ is defined via \eqref{eq:spectral_proj_mv}.
\begin{defn}
For $m\in\mathbb{Z}$ define the class $\Psi DO_{m}^{N}$ of pseudo-differential
operators on $X$ to be the collection of operators $p(-\Delta)$
with $p\in S^{m}$.
\end{defn}
The spectral theorem implies that, if $m=0$, then $p(-\Delta)$ extends
to $L^{2}(\mu)$. Moreover we will show that, as in the single
variable case, these operators are given by integration with respect
to kernels that are smooth off the diagonal and, if $X$ is a product
of fractafolds, they are Calder\'on-Zygmund operators on $X$.  The following is the main theorem of this section.
\begin{thm}
\label{thm:smooth_ker_mul}Suppose that $p\in S^{0}$, that is, $p$
is smooth and
\begin{equation}
\lambda^{\alpha}\left\vert \frac{\partial^{\alpha}}{\partial\lambda^{\alpha}}p(\lambda)\right\vert \le C_{\alpha}\label{eq:symbol_mul}
\end{equation}
for all $\alpha=(\alpha_{1},\dotsm,\alpha_{N})\in\mathbb{N}^{N}$.
Then $p(-\Delta)$, where $\Delta=(\Delta_{1},\dots,\Delta_{N})$,
is given by integration with respect to a kernel $K_{p}$ that is
smooth off the diagonal and satisfies
\begin{equation}
\vert K_{p}(x,y)\vert\lesssim\prod_{k=1}^{N}R_{k}(x_{k},y_{k})^{-d_{k}}\label{eq:1_mul}
\end{equation}
and
\begin{equation}
\left\vert \Delta_{x,i}^{\beta_{1}}\Delta_{y,j}^{\beta_{2}}K_{p}(x,y)\right\vert \lesssim R_{i}(x_{i}y_{i})^{-\beta_{1}(d_{i}+1)}R_{j}(x_{j},y_{j})^{-\beta_{2}(d_{j}+1)}\prod_{k=1}^{N}R(x_{k},y_{k})^{-d_{k}},\label{eq:2_mul}
\end{equation}
for all $i,j=1,\dots,N$, where $x=(x_{1},\dots,x_{N}),y=(y_{1},\dots,y_{N})$
and $\Delta_{x,i}$ is the Laplacian with respect to $x_{i}$ and
$\Delta_{y,j}$ is the Laplacian with respect to $y_{j}$.\end{thm}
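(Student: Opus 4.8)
The plan is to run the argument of Theorem~\ref{thm:smooth_kernel} in all $N$ spectral variables simultaneously, exploiting that every object that appears factors over the coordinates $k=1,\dots,N$. Keep the Littlewood--Paley function $\delta$ from the proof of Theorem~\ref{thm:smooth_kernel}, and for a multi-index $n=(n_1,\dots,n_N)\in\mathbb{Z}^N$ set $\delta_n(\lambda)=\prod_{k=1}^N\delta(2^{-n_k}\lambda_k)$ and $p_n(\lambda)=p(\lambda)\delta_n(\lambda)$. Then $\supp p_n\subseteq\prod_k[2^{n_k-1},2^{n_k+1}]$, for each $\lambda\in(0,\infty)^N$ at most $2^N$ of the $\delta_n(\lambda)$ are nonzero, $p=\sum_{n\in\mathbb{Z}^N}p_n$ pointwise, and hence $p(-\Delta)u=\sum_n p_n(-\Delta)u$ in $L^2(\mu)$. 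A multi-index Leibniz expansion with \eqref{eq:symbol_mul} and the support condition gives $\bigl|\partial^\alpha p_n(\lambda)\bigr|\lesssim\prod_k 2^{-n_k\alpha_k}$ with constants depending only on $\alpha$, so the rescaled functions $f_n(\lambda)=p_n(2^{n_1}\lambda_1,\dots,2^{n_N}\lambda_N)e^{\lambda_1+\dots+\lambda_N}$ are supported in $[\tfrac12,2]^N$ with all mixed derivatives bounded independently of $n$; consequently $|\hat{f}_{n}(\xi)|\le\bar{D}_{M}\prod_k(1+\xi_k^2)^{-M}$ for every $M$, uniformly in $n$.

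Next I would invert the Fourier transform in all $N$ variables. Writing $z_k=2^{-n_k}-i2^{-n_k}\xi_k$, so $\operatorname{Re}z_k=2^{-n_k}>0$, and using that the product semigroup $e^{z_1\Delta_1}\otimes\dots\otimes e^{z_N\Delta_N}$ has kernel $\prod_k h^{(k)}_{z_k}(x_k,y_k)$ with respect to $\mu=\mu_1\times\dots\times\mu_N$, one obtains
\[
p_n(-\Delta)u=\frac{1}{(2\pi)^N}\int_{\mathbb{R}^N}\hat{f}_{n}(\xi)\,\bigl(e^{z_1\Delta_1}\otimes\dots\otimes e^{z_N\Delta_N}\bigr)u\,d\xi,\qquad K_n(x,y)=\frac{1}{(2\pi)^N}\int_{\mathbb{R}^N}\hat{f}_{n}(\xi)\prod_{k=1}^N h^{(k)}_{z_k}(x_k,y_k)\,d\xi.
\]
Applying the complex heat-kernel bound \eqref{eq:heat_ker_est2} on each factor $X_k$ (with its own $d_k,\gamma_k$) together with the decay of $\hat{f}_{n}$, the integrand factors over the $\xi_k$, giving
\[
|K_n(x,y)|\lesssim\prod_{k=1}^N\Bigl[2^{\frac{n_k d_k}{d_k+1}}\!\int_{\mathbb{R}}\exp\!\bigl(-\tfrac{c\gamma_k}{2}R_k(x_k,y_k)^{(d_k+1)\gamma_k}2^{n_k\gamma_k}(1+\xi_k^2)^{-\frac{\gamma_k+1}{2}}\bigr)\frac{d\xi_k}{(1+\xi_k^2)^{M}}\Bigr].
\]
Summing over $n\in\mathbb{Z}^N$, the right-hand side is a product of $N$ single-variable sums, each estimated by Lemma~\ref{lem:ineq} with $\alpha=d_k$, $R=R_k(x_k,y_k)$, $\gamma=\gamma_k$, $d=d_k$, provided $M$ is large enough. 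This shows $K_p=\sum_n K_n$ converges, is continuous where $x_k\neq y_k$ for all $k$, equals the kernel of $p(-\Delta)$ since $p=\sum_n p_n$, and satisfies \eqref{eq:1_mul}.

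For the derivative bounds \eqref{eq:2_mul}, as in the one-variable case $\Delta_{x,i}^{\beta_1}\Delta_{y,j}^{\beta_2}K_n$ arises from the symbol $\lambda_i^{\beta_1}\lambda_j^{\beta_2}p(\lambda)$ (and $\lambda_i^{\beta_1+\beta_2}p(\lambda)$ when $i=j$); since $\lambda_i\sim2^{n_i}$ and $\lambda_j\sim2^{n_j}$ on $\supp p_n$, the corresponding rescaled functions again have $n$-independent derivative bounds after pulling out a factor $2^{n_i\beta_1}2^{n_j\beta_2}$, which then multiplies $K_n$. Repeating the previous estimate and invoking Lemma~\ref{lem:ineq} with exponent $\alpha=\beta_1(d_i+1)+d_i$ in the $i$-th variable, $\alpha=\beta_2(d_j+1)+d_j$ in the $j$-th variable, and $\alpha=d_k$ in the remaining variables, and using $\prod_{k}R_k^{-d_k}=R_i^{-d_i}R_j^{-d_j}\prod_{k\neq i,j}R_k^{-d_k}$, yields exactly \eqref{eq:2_mul}; taking $q(\lambda)=\prod_k\lambda_k^{a_k+b_k}p(\lambda)$ produces all mixed Laplacian derivatives and hence smoothness of $K_p$. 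The main obstacle, as in Theorem~\ref{thm:smooth_kernel}, is purely organizational: keeping the multi-index Leibniz estimates uniform in $n$ and verifying that the $N$-fold Fourier integral and the $N$-fold dyadic sum genuinely decouple, so that Lemma~\ref{lem:ineq} may be applied one coordinate at a time. One should also note that the bounds degenerate as soon as a single $R_k(x_k,y_k)\to0$, so what the argument actually establishes is smoothness on $\{(x,y):x_k\neq y_k\text{ for all }k\}$.
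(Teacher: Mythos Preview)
Your proposal is correct and follows essentially the same approach as the paper's own proof: a multi-index Littlewood--Paley decomposition $p_n=p\cdot\prod_k\delta(2^{-n_k}\lambda_k)$, rescaling to $f_n$ supported in $[\tfrac12,2]^N$ with $n$-independent derivative bounds, Fourier inversion to express $K_n$ as an integral of a product of complex heat kernels, and then a coordinate-wise application of the complex heat-kernel estimate and Lemma~\ref{lem:ineq}. The paper writes out only the case $N=2$ and declares the general case a matter of notation; you have in effect supplied that notation, together with the correct observation that the estimates are really on the set $\{(x,y):x_k\neq y_k\text{ for all }k\}$, which is indeed what ``off the diagonal'' must mean here.
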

\begin{proof}
For simplicity we  prove the result for $N=2$. The difference
between this and the general case is a matter of notation. We use the Littlewood-Paley decomposition from \cite[page 242]{Ste_PMS43_93}
in each variable. Let $\delta$ be the smooth function from the proof of Theorem \ref{thm:smooth_kernel} and let $D_{k}$
be the bounds from \eqref{eq:delta_bounds}. For $n,m\in\mathbb{Z}$
define
\[
p_{n,m}(\lambda)=p(\lambda)\delta(2^{-n}\lambda_{1})\delta(2^{-m}\lambda_{2}),
\]
where $\lambda=(\lambda_{1},\lambda_{2})\in(0,\infty)^{2}$. Then
$p_{n,m}$ is a smooth function with
\begin{equation}
\supp p_{n,m}\subseteq[2^{n-1},2^{n+1}]\times[2^{m-1},2^{m+1}],\label{eq:supp_pnm}
\end{equation}
for all $n,m\in\mathbb{Z}$.

Let $n,m\in\mathbb{Z}$ be fixed. We use Leibnitz formula to estimate
the bounds on the derivatives of $p_{n,m}$. If $\alpha=(\alpha_{1},\alpha_{2})\in\mathbb{N}^{2}$
then we have
\begin{align*}
\frac{\partial^{\alpha}}{\partial\lambda^{\alpha}}p_{n,m}(\lambda) & =  \frac{\partial^{\alpha_{1}}}{\partial\lambda_{1}^{\alpha_{1}}}\frac{\partial^{\alpha_{2}}}{\partial\lambda_{2}^{\alpha_{2}}}\bigl(p(\lambda)\delta(2^{-n}\lambda_{1})\delta(2^{-m}\lambda_{2})\bigr)\\
 & =  \frac{\partial^{\alpha_{1}}\delta(2^{-n}\lambda_{1})}{\partial\lambda_{1}^{\alpha_{1}}}\left(\sum_{k=0}^{\alpha_{2}}\left(\begin{array}{c}
\alpha_{2}\\
k
\end{array}\right)\frac{\partial^{k}p(\lambda)}{\partial\lambda_{2}^{k}}2^{-m(\alpha_{2}-k)}\delta^{(\alpha_{2}-k)}(2^{-m}\lambda_{2})\right)\\
 & =  \sum_{j=0}^{\alpha_{1}}\sum_{k=0}^{\alpha_{2}}\left[\left(\begin{array}{c}
\alpha_{1}\\
j
\end{array}\right)\left(\begin{array}{c}
\alpha_{2}\\
k
\end{array}\right)\left(\frac{\partial^{(j,k)}}{\partial\lambda^{(j,k)}}p(\lambda)\right)2^{-n(\alpha_{1}-j)}2^{-m(\alpha_{2}-k)}\right.\\
 & \cdot  \left.\delta^{(\alpha_{1}-j)}(2^{-n}\lambda_{1})\delta^{(\alpha_{2}-k)}(2^{-m}\lambda_{2})\right].
\end{align*}
Therefore, using \eqref{eq:symbol_mul} we obtain that
\begin{align}
\left\vert \frac{\partial^{\alpha}}{\partial\lambda^{\alpha}}p_{n,m}(\lambda)\right\vert  & \le  2^{-n\alpha_{1}}2^{-m\alpha_{2}}\sum_{j,k}\left(\begin{array}{c}
\alpha_{1}\\
j
\end{array}\right)\left(\begin{array}{c}
\alpha_{2}\\
k
\end{array}\right)\frac{C_{(j,k)}2^{nj}2^{mk}}{\lambda_{1}^{j}\lambda_{2}{}^{k}}\nonumber \\
 & \le  C(\alpha)2^{-n\alpha_{1}}2^{-m\alpha_{2}}\sum_{j,k}\left(\begin{array}{c}
\alpha_{1}\\
j
\end{array}\right)\left(\begin{array}{c}
\alpha_{2}\\
k
\end{array}\right)\frac{2^{nj}2^{mk}2^{j+k}}{2^{nj}2^{mk}}\nonumber \\
 & \le  C(\alpha)2^{-n\alpha_{1}}2^{-m\alpha_{2}}\sum_{j,k}\left(\begin{array}{c}
\alpha_{1}\\
j
\end{array}\right)\left(\begin{array}{c}
\alpha_{2}\\
k
\end{array}\right)2^{j+k}\nonumber \\
 & =  3^{\alpha_{1}+\alpha_{2}}C(\alpha)2^{-n\alpha_{1}}2^{-m\alpha_{2}}=:\overline{C}_{\alpha}2^{-n\alpha_{1}}2^{-m\alpha_{2}},\label{eq:est_pnm}
\end{align}
where $C(\alpha)=\max C_{(j,k)}$ and we used in going from the first
line to the second that $\lambda_{1}\ge2^{n-1}$ and $\lambda_{2}\ge2^{m-1}$.
Define now $f_{n,m}(\lambda)=p_{n,m}(2^{n}\lambda_{1},2^{m}\lambda_{2})e^{\lambda_{1}+\lambda_{2}}$.
Then
\begin{equation}
\supp f_{n,m}(\lambda)\subseteq\left[\frac{1}{2},2\right]\times\left[\frac{1}{2},2\right].\label{eq:supp_fnm}
\end{equation}
If $\alpha=(\alpha_{1},\alpha_{2})\in\mathbb{N}^{2}$ then using Leibnitz's
rule one more time we obtain that
\begin{align*}
\frac{\partial^{\alpha}}{\partial\lambda^{\alpha}}f_{n,m}(\lambda) & =  \frac{\partial^{\alpha_{1}}}{\partial\lambda^{\alpha_{1}}}\frac{\partial^{\alpha_{2}}}{\partial\lambda_{2}^{\alpha_{2}}}\bigl(p_{n,m}(2^{n}\lambda_{1},2^{m}\lambda_{2})e^{\lambda_{1}+\lambda_{2}}\bigr)\\
 & =  \frac{\partial^{\alpha_{1}}}{\partial\lambda^{\alpha_{1}}}\left(\sum_{k=0}^{\alpha_{2}}\left(\begin{array}{c}
\alpha_{2}\\
k
\end{array}\right)2^{mk}\left(\frac{\partial^{k}}{\partial\lambda_{2}^{k}}p_{n,m}(2^{n}\lambda_{1},2^{m}\lambda_{2})\right)e^{\lambda_{1}+\lambda_{2}}\right)\\
 & =  \sum_{j=0}^{\alpha_{1}}\sum_{k=0}^{\alpha_{2}}\left(\begin{array}{c}
\alpha_{1}\\
j
\end{array}\right)\left(\begin{array}{c}
\alpha_{2}\\
k
\end{array}\right)2^{nj}2^{mk}\frac{\partial^{(j,k)}}{\partial\lambda^{(j,k)}}p_{n,m}(2^{n}\lambda_{1},2^{m}\lambda_{2})e^{\lambda_{1}+\lambda_{2}}.
\end{align*}
Thus, using \eqref{eq:est_pnm} and \eqref{eq:supp_fnm} we obtain
that
\begin{eqnarray}
\left\vert \frac{\partial^{\alpha}}{\partial\lambda^{\alpha}}f_{n,m}(\lambda)\right\vert  & \le & \sum_{j=0}^{\alpha_{1}}\sum_{k=0}^{\alpha_{2}}\left(\begin{array}{c}
\alpha_{1}\\
j
\end{array}\right)\left(\begin{array}{c}
\alpha_{2}\\
k
\end{array}\right)2^{nj}2^{mk}\overline{C}_{(j,k)}2^{-nj}2^{-mk}e^{4}\nonumber \\
 & \le & \overline{C}(\alpha)2^{\alpha_{1}+\alpha_{2}}e^{4}=:A_{\alpha}.\label{eq:est_fnm}
\end{eqnarray}
Therefore the Fourier transform of $f_{n,m}$ satisfies
\[
\left\vert \left(\frac{\partial^{\alpha}}{\partial\lambda^{\alpha}}f_{n,m}\right)^{\wedge}(\xi)\right\vert \le B_{\alpha}
\]
for all $\alpha\in\mathbb{N}^{2}$, where $B_{\alpha}>0$, and, thus,
\begin{equation}
\bigl\vert\widehat{f}_{n,m}(\xi)\bigr\vert\le\frac{\overline{D}_{\alpha}}{(1+\xi_{1}^{2})^{\alpha_{1}}(1+\xi_{2}^{2})^{\alpha_{2}}}\label{eq:est_Ffnm}
\end{equation}
for all $\xi=(\xi_{1},\xi_{2})\in\mathbb{R}^{2}$ and $\alpha\in\mathbb{N}^{2}$,
where $\overline{D}_{\alpha}$ are positive constants depending only
on $\alpha$. Using the inverse Fourier transform we have
\[
p_{n,m}(2^{n}\lambda_{1},2^{m}\lambda_{2})e^{\lambda_{1}+\lambda_{2}}=\frac{1}{2\pi}\int\widehat{f}_{n,m}(\xi)e^{i\xi\cdot\lambda}d\xi_{1}d\xi_{2}.
\]
Therefore
\[
p_{n,m}(\lambda_{1},\lambda_{2})=\frac{1}{2\pi}\int\widehat{f}_{n,m}(\xi)e^{-\lambda_{1}\left(\frac{1}{2^{n}}-i\frac{\xi_{1}}{2^{n}}\right)}e^{-\lambda_{2}\left(\frac{1}{2^{m}}-i\frac{\xi_{2}}{2^{m}}\right)}d\xi_{1}d\xi_{2}. 
\]
Thus by spectral theory we have
\[
p_{n,m}(-\Delta)u_{1}(x_{1})u_{2}(x_{2})=\frac{1}{2\pi}\int\widehat{f}_{n,m}(\xi)e^{\Delta_{1}\left(\frac{1}{2^{n}}-i\frac{\xi_{1}}{2^{n}}\right)}u(x_{1})e^{\Delta_{2}\left(\frac{1}{2^{m}}-i\frac{\xi_{2}}{2^{m}}\right)}u(x_{2})d\xi.
\]
Hence the kernel of $p_{n,m}(-\Delta)$ is
\[
K_{n,m}(x,y)=\frac{1}{2\pi}\int\widehat{f}_{n,m}(\xi)h_{\frac{1}{2^{n}}-i\frac{\xi_{1}}{2^{n}}}^{(1)}(x_{1},y_{1})h_{\frac{1}{2^{m}}-i\frac{\xi_{2}}{2^{m}}}^{(2)}(x_{2},y_{2})d\xi_{1}d\xi_{2},
\]
for all $x=(x_{1},x_{2}),y=(y_{1},y_{2})\in X$, where $h^{(i)}$
is the heat kernel corresponding to $\Delta_{i}$, $i=1,2$. Using
inequalities \eqref{eq:heat_ker_est2} and \eqref{eq:est_Ffnm} we
obtain that
\begin{align*}
\left\vert K_{n,m}(x,y)\right\vert  & \le
C(d,j)2^{\frac{nd_{1}}{d_{1}+1}}\int\exp\left(-\frac{c\gamma R_{1}(x,y)^{(d_{1}+1)\gamma}2^{n\gamma_{1}}}{2(1+\xi_{1}^{2})^{\frac{\gamma_{1}+1}{2}}}\right)\frac{1}{(1+\xi_{1}^{2})^{j}}d\xi_{1}\\
 & \cdot  2^{\frac{md_{2}}{d_{2}+1}}\int\exp\left(-\frac{c\gamma_{2} R_{2}(x,y)^{(d_{2}+1)\gamma_{2}}2^{m\gamma_{2}}}{2(1+\xi_{2}^{2})^{\frac{\gamma_{2}+1}{2}}}\right)\frac{1}{(1+\xi_{2}^{2})^{k}}d\xi_{2},
\end{align*}
for all $j,k\ge1$. Lemma \eqref{lem:ineq} implies that (if we choose
$j$ and $k$ large enough)
\begin{align*}
\vert K_{p}(x,y)\vert & =  \left\vert \sum_{m\in\mathbb{Z}}\sum_{n\in\mathbb{Z}}K_{n,m}(x,y)\right\vert \le C(d)R_{1}(x_{1},y_{1})^{-d_{1}}R_{2}(x_{2},y_{2})^{-d_{2}}.
\end{align*}
Repeating the above proof for the function $q_{\beta_{1},\beta_{2}}(\lambda)=\lambda_{i}^{\beta_{1}}\lambda_{j}^{\beta_{2}}p(\lambda)$,
we obtain that the kernel $K_{p}(x,y)$ is smooth off the diagonal
and satisfies the following estimates
\[
\left\vert \Delta_{x,i}^{\beta_{1}}\Delta_{y,j}^{\beta_{2}}K_{p}(x,y)\right\vert \le C(d,\beta_{1},\beta_{2})R_{1}(x,y)^{-\beta_{1}(d_{1}+1)-d_{1}}R_{2}(x_{2},y_{2})^{-\beta_{2}(d_{2}+1)-d_{2}}.
\]
\end{proof}
\begin{cor}
Assume that $X$ is a product of fractafolds without boundary. If
$p\in S^{0}$ then $p(-\Delta)$ is a Calder\'on-Zygmund operator
and, thus, it extends to a bounded operator on $L^{q}(\mu)$ for all
$1<q<\infty$ and satisfies weak $1-1$ estimates.\end{cor}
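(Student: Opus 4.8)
The plan is to treat the product $X=X_1\times\cdots\times X_N$ exactly as a single fractafold was treated in Corollary \ref{pro:Prop_boundedness}, using the multi-variable kernel estimates of Theorem \ref{thm:smooth_ker_mul} in place of \eqref{eq:est1}--\eqref{eq:est2}. First I would record that $X$, equipped with the product measure $\mu$ and the metric $R(x,y)=\max_{1\le k\le N}R_k(x_k,y_k)$, is a space of homogeneous type: a ball $B(x,r)$ in this metric is the rectangle $\prod_k B_k(x_k,r)$, so $\mu(B(x,r))=\prod_k\mu_k(B_k(x_k,r))$, and since each factor satisfies the doubling condition \eqref{eq:double} so does the product. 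The $L^2(\mu)$-boundedness of $p(-\Delta)$ is immediate from the spectral theorem since $p\in S^0$ is a bounded Borel function, so the one substantive point is to verify the standard-kernel conditions for a Calder\'on-Zygmund operator.

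For this I would invoke Theorem \ref{thm:smooth_ker_mul}: $p(-\Delta)$ has a kernel $K_p$ that is smooth off the diagonal of $X\times X$, that obeys the size bound \eqref{eq:1_mul} --- which is precisely the reciprocal of $\prod_k\mu_k\bigl(B_k(x_k,R_k(x_k,y_k))\bigr)$ --- and that obeys the mixed derivative bounds \eqref{eq:2_mul}. Repeating the mean-value/continuity argument of \cite[Theorem~1.1]{IR_2010} in one factor at a time, one passes from \eqref{eq:2_mul} with $(\beta_1,\beta_2)=(0,1)$ (and its symmetric counterpart) to H\"older-type regularity of $K_p$ in each of the $2N$ variables; here one uses the H\"older continuity in the resistance metric of functions with $L^2$-Laplacian on each factor together with the Sobolev estimates of Section \ref{sec:Sobolev-Spaces}, and the fact that the constants in Theorem \ref{thm:smooth_ker_mul} are uniform in the remaining variables (they are uniform in the dyadic parameters $n,m$). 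Combining these kernel estimates with the $L^2$-bound and feeding them into the Calder\'on-Zygmund machinery --- as in \cite[Section~I.6.5]{Ste93} applied in each factor, or the theory of product singular integrals --- yields that $p(-\Delta)$ is a Calder\'on-Zygmund operator on $X$, hence bounded on $L^q(\mu)$ for all $1<q<\infty$ with the accompanying weak $(1,1)$ estimate; the $L^q$-boundedness can alternatively be read off from \cite{DuOuSi_JFA02,sikora-2008}.

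The main obstacle is that \eqref{eq:1_mul}--\eqref{eq:2_mul} are genuinely multi-parameter estimates: the size bound is normalized by the rectangle $\prod_k B_k(x_k,R_k(x_k,y_k))$ rather than by the isotropic ball $B(x,R(x,y))$, and the regularity bounds control increments of $K_p$ only one factor at a time, so they do not feed verbatim into the single-parameter Calder\'on-Zygmund theorem of \cite[Section~I.6.5]{Ste93}. The way I would handle this is by an iteration over the factors: freeze all but one variable, apply the vector-valued version of Corollary \ref{pro:Prop_boundedness} (or of \cite[Theorem~1.1]{IR_2010}) in that variable, and iterate, checking at each stage that the constants furnished by Theorem \ref{thm:smooth_ker_mul} are independent of the frozen variables. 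Carrying out this iteration cleanly --- and in particular tracking which form of the endpoint $L^1$ bound survives it --- is where essentially all of the work lies, since the symbol calculus and the complex heat-kernel bounds needed have already been established in the proof of Theorem \ref{thm:smooth_ker_mul}.
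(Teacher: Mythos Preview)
Your approach is in the right direction but considerably more laborious than the paper's. You correctly identify that the estimates \eqref{eq:1_mul}--\eqref{eq:2_mul} are genuinely multi-parameter and do not feed verbatim into the single-parameter Calder\'on--Zygmund theorem, and you propose to handle this by a Journ\'e-type iteration over the factors using a vector-valued form of \cite[Theorem~1.1]{IR_2010}. The paper sidesteps all of this: it invokes \cite[Theorem~6.1]{IR_2010} (not Theorem~1.1), which is precisely a product-space Calder\'on--Zygmund theorem for fractafolds that takes the product-type kernel bounds \eqref{eq:1_mul}--\eqref{eq:2_mul} as its hypotheses and delivers the Calder\'on--Zygmund conclusion directly. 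So the paper's proof is a one-line citation, parallel to the single-factor Corollary~\ref{pro:Prop_boundedness} but with the product theorem from \cite{IR_2010} in place of the one-variable one. Your iteration would amount to reproving that theorem, which is unnecessary here; and, as you yourself flag, tracking the weak $(1,1)$ endpoint through such an iteration is delicate (multi-parameter singular integrals typically fail weak $(1,1)$), whereas citing the ready-made product result avoids that difficulty entirely.
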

\begin{proof}
Our theorem \ref{thm:smooth_ker_mul} and Theorem 6.1 of \cite{IR_2010}
imply that $p(-\Delta)$ is a Calder\'on-Zygmund operator.\end{proof}
\begin{rem}
The boundedness of $p(-\Delta)$ on $L^{q}(\mu)$ for all $1<q<\infty$
can be deduced also from the results of \cite{sikora-2008}. \end{rem}
\begin{cor}
If $p\in S^{m}$ then $p(-\Delta)$ is given by integration with respect
to a kernel that is smooth off the diagonal.\end{cor}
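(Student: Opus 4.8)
The plan is to reduce to the order-zero case, which is Theorem~\ref{thm:smooth_ker_mul}, exactly as Corollary~\ref{cor:smooth_kernel_m} reduces to Theorem~\ref{thm:smooth_kernel} in the single-variable setting. If $m\le 0$ there is nothing to do: by \eqref{eq:symbol_mul_m} we then have $(1+|\lambda|)^{m/(d+1)}\le 1$, so $p$ already satisfies \eqref{eq:symbol_mul}, i.e.\ $p\in S^{0}$, and Theorem~\ref{thm:smooth_ker_mul} applies directly. So I would assume $m\ge 1$ (recall $m\in\mathbb{Z}$) and set
\[
q(\lambda)=p(\lambda)\prod_{k=1}^{N}(1+\lambda_{k})^{-m},\qquad r(\lambda)=\prod_{k=1}^{N}(1+\lambda_{k})^{m},
\]
so that $p=r\,q$ and $r(-\Delta)=\prod_{k=1}^{N}(I-\Delta_{k})^{m}$ is an iterated differential operator.

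The first step is to check that $q\in S^{0}$. Since for each single variable $\lambda_{k}^{\,j}\partial_{\lambda_{k}}^{j}(1+\lambda_{k})^{-m}$ is a bounded multiple of $(1+\lambda_{k})^{-m}$ (because $\lambda_{k}^{j}\le(1+\lambda_{k})^{j}$), expanding $\partial^{\alpha}q$ by the Leibniz rule and invoking \eqref{eq:symbol_mul_m} for the factor coming from $p$ gives
\[
\lambda^{\alpha}\Bigl|\frac{\partial^{\alpha}}{\partial\lambda^{\alpha}}q(\lambda)\Bigr|\;\lesssim\;(1+|\lambda|)^{\frac{m}{d+1}}\prod_{k=1}^{N}(1+\lambda_{k})^{-m}.
\]
Because $1+|\lambda|\lesssim\prod_{k}(1+\lambda_{k})$ and $m/(d+1)\ge 0$, the right-hand side is $\lesssim\prod_{k}(1+\lambda_{k})^{\frac{m}{d+1}-m}\le 1$, the last inequality since $\frac{m}{d+1}-m=-\frac{md}{d+1}\le 0$. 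Hence $q\in S^{0}$, and Theorem~\ref{thm:smooth_ker_mul} produces a kernel $K_{q}$ for $q(-\Delta)$ that is smooth off the diagonal and obeys \eqref{eq:1_mul}--\eqref{eq:2_mul}.

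Finally, exactly as in Corollary~\ref{cor:smooth_kernel_m}, the product symbolic calculus — immediate from $P_{\lambda}(q(-\Delta)u)=q(\lambda)P_{\lambda}u$ for the joint spectral measure \eqref{eq:psdo_mv}, just as in the proof of Proposition~\ref{lem:Symboliccalculus} — gives $p(-\Delta)=r(-\Delta)\circ q(-\Delta)$, so that
\[
K_{p}(x,y)=\prod_{k=1}^{N}(I-\Delta_{x,k})^{m}\,K_{q}(x,y)
\]
is the kernel of $p(-\Delta)$; and it is still smooth off the diagonal because applying finitely many Laplacians in the $x_{k}$ variables sends a kernel that is smooth off the diagonal to one that is smooth off the diagonal. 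This would complete the proof.

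I do not expect a serious obstacle; the main point is the Leibniz estimate establishing $q\in S^{0}$, and the one thing to watch is the choice of normalizing factor. One should use $\prod_{k}(1+\lambda_{k})^{-m}$, not $(1+|\lambda|)^{-m}$: the former is the symbol of the genuine differential operator $\prod_{k}(I-\Delta_{k})^{m}$, whose action on the spatial variables is already controlled by Theorem~\ref{thm:smooth_ker_mul}, whereas $(1+|\lambda|)^{m}$ is not polynomial in the $\lambda_{k}$ and so does not correspond to such an operator. Both choices would land in $S^{0}$, but only the first lets one conclude.
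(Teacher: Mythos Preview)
Your proposal is correct and follows essentially the same approach as the paper: the paper gives no separate proof in the product case, so the intended argument is the obvious adaptation of Corollary~\ref{cor:smooth_kernel_m}, which is exactly what you carry out. Your choice of the normalizing factor $\prod_{k}(1+\lambda_{k})^{-m}$ (exploiting that $m\in\mathbb{Z}$ here so that $\prod_{k}(I-\Delta_{k})^{m}$ is a genuine local differential operator) is the natural product-setting replacement for the paper's $(1+\lambda)^{-m}$ and makes the final smoothness step clean.
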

\begin{example}
Consider the Riesz potentials (\cite[page 591]{Str:TAM04})
\[
R_{i}u=\sum\frac{\lambda_{i}}{\lambda_{1}+\dots+\lambda_{N}}\proj u,\; i=1,\dots,N.
\]
Clearly $p_{i}(\lambda_{1},\dots,\lambda_{N})=\frac{\lambda_{i}}{\lambda_{!}+\dots+\lambda_{N}}$,
$i=1,\dots N$ are in $S^{0}$ and thus the Riesz potentials are bounded
on $L^{p}(\mu)$, for $1<p<\infty$, and are given by integration
with respect to smooth kernels.
\end{example}

\section{\label{sec:Elliptic-and-Hypoelliptic}Elliptic and Hypoelliptic operators}

In this section we specialize the set-up of section \ref{sec:products}
to the case when $X$ is the product of $N$ fractafolds without boundary.
If $X_{i}$ is not compact, we assume that $\Delta_{i}$ has pure
point spectrum. We still write $\lambda=(\lambda_{1},\dots,\lambda_{N})\in(0,\infty)^{N}$,
$x=(x_{1},\dots,x_{N})\in X$ and $\Delta=(\Delta_{1},\dots,\Delta_{N})$.
We begin by extending the definition of elliptic operators from \cite[Definition 8.2]{RogStr-2009_distr}
to elliptic pseudo-differential operators. We show that any pseudo-differential
operator satisfies the pseudo-local property and the converse inclusion
is also true for elliptic operators. In other words, we prove that
elliptic operators on fractals are hypoelliptic. These results give
a positive answer to some open questions posed in \cite{Str_JFA03,RogStr-2009_distr}.

We use the theory of distributions on fractals as developed in \cite{RogStr-2009_distr}
for our definitions and results. Most of the results, however, are
true for more general metric spaces that we consider in Section \ref{sec:background}
if one  replaces the word {}``distribution'' with {}``function''
in what follows.

Recall that the space of test functions $\D(X)$ consists of all smooth
functions with compact support. The space of distributions on $X$
is the dual space $\D^{\prime}(X)$ of $\D(X)$ with the weak-star
topology (\cite[Definition 4.1]{RogStr-2009_distr}). A distribution
$u$ is \emph{smooth} on the open set $\Omega\subset X$ (\cite[Definition 8.1]{RogStr-2009_distr})
if there exists a smooth function $f$ on $\Omega$ such that
\begin{equation}
\langle u,\phi\rangle=\int f\phi d\mu\label{eq:smooth_dist}
\end{equation}
for all $\phi\in\D(\Omega)$. If $\Omega_{u}$ is the maximal open
set on which $u$ is smooth, then the \emph{singular support} of $u$
is (\cite[Definition 8.2]{RogStr-2009_distr})
\[
\singsupp(u)=X\setminus\Omega_{u}.
\]
If $p\in S^{m}$ then one can extend the definition of $p(-\Delta)$
to the space of distributions via
\[
\langle p(-\Delta)u,\phi\rangle=\langle u,p(-\Delta)\phi\rangle,
\]
for all $\phi\in\D(X)$.
\begin{defn}
We say that $p(-\Delta)$ satisfies the \emph{pseudo-local }property
if
\[
\singsupp(p(-\Delta)u)\subseteq\singsupp(u).
\]
\end{defn}
\begin{thm}
\label{pro:pseudo-local}If $p$ is in $S^{m}$ then $p(-\Delta)$
satisfies the pseudo-local property.\end{thm}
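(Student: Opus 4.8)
The plan is to prove that $p(-\Delta)u$ is smooth on the maximal open set $\Omega_{u}$ on which $u$ is smooth; this is equivalent to $\singsupp(p(-\Delta)u)\subseteq X\setminus\Omega_{u}=\singsupp(u)$. Since smoothness of a distribution is a local condition, it suffices to fix $x_{0}\in\Omega_{u}$ and show that $p(-\Delta)u$ coincides with a smooth function on a small cellular neighborhood of $x_{0}$. Write $f$ for the smooth function representing $u$ on $\Omega_{u}$, and choose nested cellular neighborhoods $\overline{C'}\subset\operatorname{int}(C)$, $\overline{C}\subset\operatorname{int}(C^{+})$, $\overline{C^{+}}\subset\Omega_{u}$ of $x_{0}$.

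The first step is to split $u=u'+u''$ into a globally smooth piece and a piece supported away from $x_{0}$. Because multiplying a distribution by a smooth bump is not available on a fractal — products of smooth functions generally leave $\dom\Delta$ — I will instead invoke the Borel-type theorem of \cite{RoStrTe_08}: applied to the smooth function $f$ on $\overline{C^{+}}$ it produces a smooth, compactly supported function $u'$ on $X$ with $u'=f$ on $C$ and $\supp u'\subseteq C^{+}$. Concretely, on each of the finitely many cells forming the collar $C^{+}\setminus C$ one prescribes the jets $(\Delta^{k}f)$ along $\partial C$ and the zero jet along $\partial C^{+}$ and glues, the junction-matching conditions ensuring $u'\in\dom\Delta^{n}$ for all $n$. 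Set $u''=u-u'$. Then $u''$ is a distribution that vanishes on $\operatorname{int}(C)$ (there $u'=f$ represents $u$), so $\overline{C'}$ lies at positive resistance distance from $\supp u''$, and $\singsupp u''=\singsupp u$ since $u'$ is globally smooth.

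For the smooth piece, $u'$ compactly supported and smooth gives $\Delta^{k}u'\in L^{2}$ for all $k$, hence $P_{\lambda}u'$ decays rapidly in $\lambda$; combined with the polynomial growth of $p$ this makes $\Delta^{n}p(-\Delta)u'=\sum_{\lambda}\lambda^{n}p(\lambda)P_{\lambda}u'$ converge in $L^{2}$ for every $n$, so $p(-\Delta)u'$ is smooth on $X$ (equivalently, use the Sobolev mapping property of Section \ref{sec:Sobolev-Spaces}). For the remaining piece, Corollary \ref{cor:smooth_kernel_m}, together with its product analogue Theorem \ref{thm:smooth_ker_mul}, supplies a kernel $K_{p}$ for $p(-\Delta)$ that is smooth off the diagonal in both variables. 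Since $\supp\phi\subseteq C'$ is separated from $\supp u''$, for $\phi\in\D(C')$ the function $y\mapsto\int K_{p}(y,z)\phi(z)\,d\mu(z)$ equals $p(-\Delta)\phi$ on a neighborhood of $\supp u''$ and is smooth there; setting $g(z):=\langle u'',K_{p}(\cdot\,,z)\rangle$, which is smooth on $C'$ by the off‑diagonal smoothness and decay estimates for $K_{p}$ and differentiation under the pairing, a Fubini‑type interchange for the distribution pairing yields
\[
\langle p(-\Delta)u'',\phi\rangle=\bigl\langle u'',\ \int K_{p}(\cdot\,,z)\phi(z)\,d\mu(z)\bigr\rangle=\int g(z)\,\phi(z)\,d\mu(z)
\]
for all $\phi\in\D(C')$. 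Hence $p(-\Delta)u''$, and therefore $p(-\Delta)u=p(-\Delta)u'+p(-\Delta)u''$, is smooth on $C'\ni x_{0}$.

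The main obstacle — and the point where this departs from the classical proof — is the construction of the decomposition $u=u'+u''$: one cannot localize a distribution on a fractal by multiplying with a smooth cutoff, so the Borel-type extension theorem of \cite{RoStrTe_08} has to produce a compactly supported smooth function agreeing with $f$ near $x_{0}$, and one must verify the cell-junction matching conditions to know the outcome is genuinely in $\dom\Delta^{n}$ for all $n$. A secondary technical point is justifying the interchange of the distribution pairing with the kernel integral and, when $X$ is noncompact, checking that pairings with non–compactly supported smooth functions are legitimate within the distribution theory of \cite{RogStr-2009_distr}; both are routine once the decomposition is in hand.
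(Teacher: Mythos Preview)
Your proof is correct and follows the same overall strategy as the paper: localize via the Borel-type extension theorem (the paper cites this as \cite[Theorem~7.7]{RogStr-2009_distr}) to split $u=u'+u''$ with $u'$ globally smooth and $u''$ supported away from the chosen cell, then use the off-diagonal smoothness of the kernel $K_{p}$ from Corollary~\ref{cor:smooth_kernel_m}/Theorem~\ref{thm:smooth_ker_mul} to show $p(-\Delta)u''$ is smooth on the small cell.

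The one genuine difference is in how the kernel step for $u''$ is justified. You pair $u''$ directly with $K_{p}(\cdot,z)$ and invoke a Fubini-type interchange, acknowledging that this requires checking pairings with non--compactly supported smooth functions are admissible in the distribution theory of \cite{RogStr-2009_distr}. The paper instead invokes the structure theorem for distributions \cite[Theorem~5.10]{RogStr-2009_distr} to write $u_{2}$ as a locally finite sum $\sum\Delta^{m_{j}}\nu_{j}$ (Radon measures) or $\sum\Delta^{m_{j}+1}f_{j}$ (continuous compactly supported functions), with all supports inside $K^{c}$; the pairing $\langle p(-\Delta)u_{2},\phi\rangle$ then reduces to an honest iterated integral against measures or continuous functions, so the interchange and the smoothness of $z\mapsto\sum_{j}\int\Delta^{m_{j}}K_{p}(x,z)\,d\nu_{j}(x)$ are immediate. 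This buys the paper a cleaner justification of exactly the step you flag as a ``secondary technical point,'' and sidesteps the issue of non--compactly supported test functions entirely. Your route is shorter to state but leaves more to the reader; the paper's route is more explicit at the cost of importing one more structural result.
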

\begin{proof}
Let $u\in\D^{\prime}(X)$ and let $K_{u}=\singsupp(u)$. Then
$u|_{\Omega}$ is smooth, where $\Omega=X\setminus K_{u}$. We need
to show that $Pu|_{\Omega}$ is also smooth. Let $K$ be be any finite
union of cells in $\Omega$ and let $\Omega^{\prime}$ the interior
of $K$. We identify $u|_{\Omega}$ with the smooth functions given
by \eqref{eq:smooth_dist}. Then, using \cite[Theorem 7.7]{RogStr-2009_distr},
we can find a smooth function $u_{1}$ such that $u_{1}|_{K}=u|_{K}$,
and $u_{1}|_{\Omega^{c}}=0$. We write then $u=u_{1}+u_{2}$, where
$u_{2}$ is supported on $K^{c}$. Then $Pu=Pu_{1}+Pu_{2}$. Since
$u_{1}$ is smooth it follows that $Pu_{1}$ is smooth. Therefore
we only need to prove that $Pu_{2}|_{\Omega^{\prime}}$ is smooth.
For this we are going to use the structure theorem for distributions
\cite[Theorem 5.10]{RogStr-2009_distr} (see also the comments following
Assumption 7.2 in the same paper). Then $u_{2}$ may be written as
a locally finite sum of the form $u_{2}=\sum\Delta^{m_{j}}\nu_{j}$
or $u_{2}=\sum\Delta^{m_{j}+1}f_{j}$, where the $\nu_{j}$ are Radon
measures and the $f_{j}$ are continuous functions with compact support.
Notice that the support of each $\nu_{j}$ is a subset of $K^{c}$
and, respectively, the support of each $f_{j}$ is a subset of $K^{c}$.

Assume first that $u_{2}=\sum\Delta^{m_{j}}\nu_{j}$ where $\nu_{j}$
are Radon measures. Let $\phi\in\D(\Omega^{\prime}).$ Then
\begin{eqnarray*}
\langle p(-\Delta)u_{2},\phi\rangle & = & \sum_{j}\int\Delta^{m_{j}}P\phi(x)d\nu_{j}(x)\\
 & = & \sum\int\Delta^{m_{j}}\int K_{p}(x,y)\phi(y)d\mu(y)d\nu_{j}(x)\\
 & = & \int\sum\int\Delta^{m_{j}}K_{p}(x,y)d\nu_{j}(x)\phi(y)d\mu(y),
\end{eqnarray*}
where $K_{p}$ is the kernel of $p(-\Delta)$ provided by Theorem
\ref{thm:smooth_ker_mul}. The above expression makes sense because
the support of $\phi$ is a subset of $\Omega^{\prime}$, while the
supports of $\nu_{j}$ are subsets of $K^{c}$. Since $K_{p}$ is
smooth it follows that the function $x\mapsto\sum\int\Delta^{m_{j}}K_{p}(x,y)d\nu_{j}(y)$
is smooth for all $x\in\Omega^{\prime}$. Thus $p(-\Delta)u_{2}|_{\Omega^{\prime}}$
is smooth. A similar proof shows that $p(-\Delta)u_{2}|_{\Omega^{\prime}}$
is smooth if $u_{2}=\sum\Delta^{m_{j}+1}f_{j}$. It follows that $p(-\Delta)u$
is smooth on $\Omega^{\prime}$. Since $K$ was arbitrarily chosen,
we have that $p(-\Delta)u$ is smooth on $\Omega$.
\end{proof}
We turn now to the study of elliptic pseudo-differential operators.
The first part of the following definition is from \cite[Definition 8.4]{RogStr-2009_distr}.
\begin{defn}
An \emph{elliptic polynomial} of degree $m$ on $\mathbb{R}^{N}$
is any polynomial $q$ of degree $m$ satisfying the property that
there exist two positive constants $c$ and $A$ such that
\begin{equation}
\vert q(\lambda)\vert\ge c\vert\lambda\vert^{\frac{m}{d+1}}\mbox{ for all }\lambda\in\mathbb{R}_{+}^{N}\text{ such that }\vert\lambda\vert\ge A.\label{eq:elliptic}
\end{equation}
An \emph{elliptic differential operator} on $X$ is an operator $q(-\Delta)$
for some elliptic polynomial $q(\lambda)$. More generally, an \emph{elliptic
pseudo-differential operator} of order $m$ on $X$ is a pseudo-differential
operator $p(-\Delta)$ whose symbol $p\in S^{m}$ satisfies \eqref{eq:elliptic}.\end{defn}
\begin{rem}
Notice that we require that condition \eqref{eq:elliptic} holds only
for nonnegative values of $\lambda_{1},\dots,\lambda_{N}$. In particular,
the polynomial $p(\lambda)=\lambda_{1}+\dots+\lambda_{N}$ is elliptic
according to our definition.\end{rem}
\begin{thm}
\label{thm:elliptic_hyppoelliptic}An elliptic pseudo-differential
operator $p(-\Delta)$ is hypoelliptic, mea-ning that
\[
\singsupp(p(-\Delta)u)=\singsupp(u).
\]
\end{thm}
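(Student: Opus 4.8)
The plan is to prove the two inclusions $\singsupp(p(-\Delta)u)\subseteq\singsupp(u)$ and $\singsupp(u)\subseteq\singsupp(p(-\Delta)u)$ separately. The first inclusion is exactly the pseudo-local property, which is already established in Theorem \ref{pro:pseudo-local} for \emph{any} $p\in S^m$; since an elliptic pseudo-differential operator has symbol in $S^m$ by definition, this direction is immediate. All the work is in the reverse inclusion, which says that $p(-\Delta)$ does not destroy singularities. Equivalently, if $f:=p(-\Delta)u$ is smooth on an open set $\Omega$, then $u$ is smooth on $\Omega$ as well.

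\textbf{Construction of a parametrix.} The key idea is to build a pseudo-differential operator $E=e(-\Delta)$ with $e\in S^{-m}$ such that $E\circ p(-\Delta)=I+\Sigma$, where $\Sigma$ has a smooth kernel. Using the ellipticity estimate \eqref{eq:elliptic}, choose a cutoff: pick a smooth function $\chi$ on $(0,\infty)^N$ with $\chi(\lambda)=0$ for $|\lambda|\le A$ and $\chi(\lambda)=1$ for $|\lambda|\ge 2A$, and set
\[
e(\lambda)=\frac{\chi(\lambda)}{p(\lambda)}.
\]
The divisor $p(\lambda)$ is bounded below by $c|\lambda|^{m/(d+1)}$ on the support of $\chi$, so $e$ is well-defined and smooth there; one checks via the Leibnitz/quotient rule, together with the symbol bounds \eqref{eq:symbol_mul_m} on $p$ and the lower bound \eqref{eq:elliptic}, that $e\in S^{-m}$. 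Then by the symbolic calculus (Proposition \ref{lem:Symboliccalculus}, in its $N$-variable form), $e(-\Delta)\circ p(-\Delta)=(ep)(-\Delta)$, and $ep(\lambda)=\chi(\lambda)$ differs from $1$ only on the compact set $\{|\lambda|\le 2A\}$. Hence $\Sigma:=(ep-1)(-\Delta)$ is a spectral multiplier with compactly supported symbol, so it is smoothing: its kernel is $\sum_{|\lambda|\le 2A}(\chi(\lambda)-1)\,$(finite rank projections of eigenfunctions$)$, a finite sum of products of eigenfunctions, hence smooth, and $\Sigma$ maps distributions to smooth functions.

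\textbf{Finishing the argument.} Write $u=E\,p(-\Delta)u-\Sigma u=E f-\Sigma u$. Now restrict to the open set $\Omega=X\setminus\singsupp(p(-\Delta)u)$ where $f$ is smooth. The term $\Sigma u$ is smooth everywhere by the previous paragraph. For the term $Ef$: since $E=e(-\Delta)$ with $e\in S^{-m}$ it is a pseudo-differential operator, and by Theorem \ref{pro:pseudo-local} applied to $E$ we have $\singsupp(Ef)\subseteq\singsupp(f)\subseteq X\setminus\Omega$, so $Ef$ is smooth on $\Omega$. (One should be slightly careful that $f$ is a genuine distribution to which $E$ applies; this holds because $f=p(-\Delta)u\in\D'(X)$ by the distributional extension defined before Theorem \ref{pro:pseudo-local}.) Therefore $u=Ef-\Sigma u$ is smooth on $\Omega$, which gives $\singsupp(u)\subseteq X\setminus\Omega=\singsupp(p(-\Delta)u)$, completing the proof.

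\textbf{Expected main obstacle.} The routine-looking but genuinely delicate point is verifying that $e=\chi/p\in S^{-m}$, i.e. that the quotient of an elliptic $S^m$-symbol is an $S^{-m}$-symbol under the Marcinkiewicz-type conditions \eqref{eq:symbol_mul_m}; one must control $\lambda^\alpha\partial^\alpha(\chi/p)$ by a repeated application of the product and quotient rules, bounding each factor $\lambda^\beta\partial^\beta p$ by $|\lambda|^{m/(d+1)}$ and using $|p(\lambda)|\gtrsim|\lambda|^{m/(d+1)}$ on $\supp\chi$ together with the fact that derivatives of $\chi$ are supported in the compact annulus $\{A\le|\lambda|\le 2A\}$. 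A secondary subtlety is making sure the symbolic calculus and pseudo-locality statements proved for scalar symbols on a single space carry over verbatim to the $N$-variable product setting — this is the content of Section \ref{sec:products} and Theorem \ref{pro:pseudo-local}, so it may be invoked, but the composition identity $e(-\Delta)\circ p(-\Delta)=(ep)(-\Delta)$ in several variables should be explicitly cited.
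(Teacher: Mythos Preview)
Your proof is correct and follows essentially the same approach as the paper: construct a parametrix $e(\lambda)=\chi(\lambda)/p(\lambda)\in S^{-m}$ using the ellipticity bound, invoke the symbolic calculus to write $e(-\Delta)p(-\Delta)=I+\Sigma$ with $\Sigma$ an infinitely smoothing operator (compactly supported symbol), and combine this with the pseudo-local property (Theorem~\ref{pro:pseudo-local}) to obtain both inclusions. The paper's argument is organized identically, including the same derivative formula for $1/p$ that you flag as the ``expected main obstacle''.
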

\begin{proof}
Since $p$ is elliptic the set of its zeros is compact and $p(\lambda)\ne0$
for all $\lambda\in\mathbb{R}^{N}$ with $\vert\lambda\vert\ge A$.
Let $\xi\in C^{\infty}$ be a cut-off function which is $0$ on a
neighborhood of the zeros of $p$ and is $1$ for $\vert\lambda\vert\ge A$.
Define $p_{1}(\lambda)=\xi(\lambda)/p(\lambda)$, so that $p_{1}(\lambda)$
is zero on a neighborhood of the zeros of $p$. Then $p(\lambda)p_{1}(\lambda)=1$
if $\vert\lambda\vert\ge A$. Moreover $p_{1}\in S^{-m}$, since for
$\vert\lambda\vert\ge A$ we have that (see, for example, \cite[Proof of Theorem 1.3]{Tay_PSDO81})
\[
\frac{\partial^{\alpha}}{\partial\lambda^{\alpha}}p_{1}(\lambda)=\sum_{\alpha^{1}+\alpha^{2}+\dots+\alpha^{\mu}=\alpha}\frac{\partial^{\alpha^{1}}}{\partial\lambda^{\alpha^{1}}}p(\lambda)\dots\frac{\partial^{\alpha^{\mu}}}{\partial\lambda^{\alpha^{\mu}}}p(\lambda)\cdot p(\lambda)^{1-\mu}.
\]
Using Lemma \ref{lem:Symboliccalculus} and Theorem \ref{thm:smooth_ker_mul}
we have that
\begin{align*}
p_{1}(-\Delta)p(-\Delta)u & =u+Ru,
\end{align*}
where $R\in S^{-\infty}$ has a smooth compactly supported symbol
$r(\lambda)$. Hence $R$ is infinitely smoothing. It follows that
if $\Omega$ is an open set such that $p(-\Delta)u|_{\Omega}$ is
smooth then $u|_{\Omega}=p_{1}(-\Delta)p(-\Delta)u|_{\Omega}-Ru|_{\Omega}$
is smooth. Therefore $\singsupp(u)\subseteq\singsupp(p(-\Delta)u)$.
Since the converse is provided by Theorem \ref{pro:pseudo-local},
it follows that $p(-\Delta)$ is hypoelliptic.
\end{proof}

\subsection{\label{sub:Quasielliptic-operators}Quasielliptic operators}

It is known (\cite{BoStr_IUMJ07,DrStr_IJM09}) that for some fractals
the set of ratios of eigenvalues of the Laplacian has gaps. That is,
there are $\alpha<\beta$ such that $\lambda/\lambda^{\prime}\notin(\alpha,\beta)$
for all $\lambda\ne\lambda^{\prime}$ in the spectrum of $-\Delta$.
Consider the product $X=X_{1}\times X_{2}$ of two copies of the same
fractafold without boundary that have a spectral gap $(\alpha,\beta)$.
Let $\Delta_{1}$ and $\Delta_{2}$ be the Laplacian on the fractafold
viewed as acting on $X_{1}$ and, respectively, on $X_{2}$. As before,
we write $\Delta=(\Delta_{1},\Delta_{2})$. For a real number $a$
in $(\alpha,\beta)$ and $\varepsilon>0$ define the cone
\[
\Gamma_{a,\varepsilon}=\{(\lambda_{1},\lambda_{2})\in\mathbb{R}_{+}^{2}\::\:\vert\lambda_{1}-a\lambda_{2}\vert<\varepsilon\lambda_{2}\}.
\]
If $a\in(\alpha,\beta)$ then there is $\varepsilon>0$ such that
$\Gamma_{a,\varepsilon}$ does not contain any pair of eigenvalues
of $-\Delta$. Let $\mathfrak{G}$ be the collection of such cones.
Denote by $\mathcal{G}$ the collection of cones in $\mathbb{R}_{+}^{2}$
that do not intersect any cone in $\mathfrak{G}$. Thus any pair of
eigenvalues of $-\Delta$ lies in some cone in $\mathcal{G}$. Moreover,
any pseudo-differential operator will {}``live'' on $\mathcal{G}$,
as we will make it precise in Lemma \ref{lem:quasi_is_el}.
\begin{defn}
We say that an $m$-symbol $p$ is \emph{quasielliptic} if it satisfies
\eqref{eq:elliptic} for all $\lambda\in\Gamma\text{ and }\Gamma\in\mathcal{G}$,
with the constant $A$ depending on $\Gamma$. A pseudo-differential
operator $p(-\Delta)$ is called a quasielliptic operator if $p$
is a quasielliptic symbol.\end{defn}
\begin{example}
If $a\in(\alpha,\beta)$ then the operator $\Delta_{1}-a\Delta_{2}$
is a quasielliptic operator (\cite{BoStr_IUMJ07,sikora-2008}). An
intriguing consequence of the existence of spectral gaps is that,
if $a\in(\alpha,\beta)$ then the operator $1/(\Delta_{1}-a\Delta_{2})$
is bounded on $L^{p}(\mu)$ for all $1<p<\infty$ \cite{sikora-2008}.
The next proposition show that the quasielliptic operators are, in
fact, elliptic operators. \end{example}
\begin{prop}
\label{pro:quasielliptic=00003Delliptic}A quasielliptic pseudo-differential
operator $p(-\Delta)$ is equal to an elliptic pseudo-differential
operator.
\end{prop}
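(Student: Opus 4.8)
The plan is to exploit the fact that a quasielliptic symbol already satisfies the lower bound \eqref{eq:elliptic} on every cone $\Gamma\in\mathcal{G}$, and that all pairs of eigenvalues of $-\Delta$ lie inside $\bigcup_{\Gamma\in\mathcal{G}}\Gamma$. The obstruction to $p$ being elliptic is purely the behaviour of $p$ on the ``forbidden'' cones in $\mathfrak{G}$, where nothing is assumed and $p$ may well vanish. So I would modify $p$ only on those forbidden regions, producing a genuinely elliptic symbol $\tilde p\in S^m$ that agrees with $p$ on a neighbourhood of every cone in $\mathcal{G}$, and then argue that $\tilde p(-\Delta)=p(-\Delta)$ because the two symbols agree at every pair of eigenvalues.

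First I would fix, for each forbidden cone $\Gamma_{a,\varepsilon}\in\mathfrak{G}$, a slightly smaller sub-cone whose closure (minus the origin) lies in the interior of $\Gamma_{a,\varepsilon}$, and choose a smooth partition-of-unity-type function $\chi$ on $(0,\infty)^2$ which is $1$ on a neighbourhood of $\bigcup_{\Gamma\in\mathcal{G}}\Gamma$ and $0$ on the union of these smaller sub-cones; one can take $\chi$ to be homogeneous of degree $0$ for $|\lambda|$ large, so that its $\lambda$-scaled derivatives $\lambda^\alpha\partial_\lambda^\alpha\chi$ are bounded, i.e. $\chi$ is an $S^0$ ``cutoff'' in the Marcinkiewicz sense of Definition \ref{def:psdo_cc_prod}. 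Then set
\[
\tilde p(\lambda)=\chi(\lambda)p(\lambda)+\bigl(1-\chi(\lambda)\bigr)(1+|\lambda|^2)^{\frac{m}{2(d+1)}}.
\]
On the support of $\chi$, $\tilde p$ coincides with $p$ up to the smooth bounded factor issues, but more importantly on a neighbourhood of $\bigcup_{\Gamma\in\mathcal{G}}\Gamma$ we have $\chi\equiv1$, hence $\tilde p=p$ there; on the region where $\chi<1$, the added term $(1+|\lambda|^2)^{m/(2(d+1))}$ is bounded below by $c|\lambda|^{m/(d+1)}$ for $|\lambda|$ large, and $\chi p$ is either zero or also satisfies the quasielliptic lower bound, so a short triangle-inequality argument gives $|\tilde p(\lambda)|\ge c|\lambda|^{m/(d+1)}$ for all large $|\lambda|$ in $(0,\infty)^2$. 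Combined with the estimate on $\bigcup_{\Gamma}\Gamma$ this shows $\tilde p$ is elliptic. The symbol estimates $\lambda^\alpha|\partial_\lambda^\alpha\tilde p|\le C_\alpha(1+|\lambda|)^{m/(d+1)}$ follow from the Leibniz rule, the $S^0$-bounds on $\chi$ and $1-\chi$, the $S^m$-bounds on $p$, and the obvious bounds on $(1+|\lambda|^2)^{m/(2(d+1))}$; so $\tilde p\in S^m$ and $p(-\Delta)-\tilde p(-\Delta)=(p-\tilde p)(-\Delta)$ is a pseudo-differential operator whose symbol $p-\tilde p=(1-\chi)\bigl(p-(1+|\lambda|^2)^{m/(2(d+1))}\bigr)$ is supported away from $\bigcup_{\Gamma\in\mathcal{G}}\Gamma$.

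Finally I would invoke the spectral formula: for $u\in\mathcal{D}^N$,
\[
\bigl(p(-\Delta)-\tilde p(-\Delta)\bigr)u=\sum_{(\lambda_1,\lambda_2)}\bigl(p-\tilde p\bigr)(\lambda_1,\lambda_2)\,(P_{\lambda_1}\otimes P_{\lambda_2})u,
\]
and every pair $(\lambda_1,\lambda_2)$ of eigenvalues of $-\Delta$ lies in some $\Gamma\in\mathcal{G}$ (this is exactly the content of the construction of $\mathcal{G}$ preceding the proposition), hence in the set where $\chi\equiv1$, hence where $p-\tilde p=0$. Therefore the operator vanishes on the dense set $\mathcal{D}^N$, and by boundedness it is zero; that is, $p(-\Delta)=\tilde p(-\Delta)$ with $\tilde p$ elliptic, as claimed. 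The one point needing a little care — and the main obstacle — is the construction of $\chi$ with simultaneously (i) $\chi\equiv1$ on a full neighbourhood of $\bigcup_{\Gamma\in\mathcal{G}}\Gamma$, (ii) $\chi\equiv0$ on regions large enough that the elliptic lower bound for $\tilde p$ holds on all of $(0,\infty)^2$ for large $|\lambda|$, and (iii) $\chi\in S^0$; this requires checking that the forbidden cones $\mathfrak{G}$ and the cones $\mathcal{G}$ can be separated by slightly shrinking, which is possible because $\mathcal{G}$ consists of cones not meeting any cone of $\mathfrak{G}$, so their closures are disjoint away from the origin and one can interpolate smoothly in the angular variable while keeping radial homogeneity at infinity.
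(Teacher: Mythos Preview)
Your overall strategy---modify $p$ only on the forbidden cones so that the resulting $\tilde p$ is elliptic and agrees with $p$ at every eigenvalue pair, then invoke the spectral formula---is exactly the paper's approach, and the final step is fine. However, the specific construction $\tilde p = \chi p + (1-\chi)(1+|\lambda|^2)^{m/(2(d+1))}$ has a genuine gap in the ellipticity argument, namely \emph{cancellation}. The transition region $\{0<\chi<1\}$ lies inside the forbidden cone $\Gamma_{a,\varepsilon}$ (since $\chi\equiv1$ on a neighbourhood of its complement), and there you have no lower bound on $p$: it may vanish, or worse, equal a negative multiple of $q=(1+|\lambda|^2)^{m/(2(d+1))}$. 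If at some $\lambda_0$ in this region $p(\lambda_0)=-t\,q(\lambda_0)$ for some $t>0$---which nothing rules out, the only constraint being $|p|\le C(1+|\lambda|)^{m/(d+1)}$---then $\tilde p(\lambda_0)=0$ as soon as $\chi(\lambda_0)=1/(1+t)$. Your claim that ``$\chi p$ is either zero or also satisfies the quasielliptic lower bound'' is therefore false on the transition region. The same issue arises on the portion of $\{\chi=1\}$ that lies strictly inside the forbidden cone (the extra ``neighbourhood'' you added): there $\tilde p=p$, and again no lower bound is available.

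The paper circumvents this by interpolating \emph{multiplicatively} rather than additively: on the complement of a maximal forbidden cone, where $p$ is bounded away from zero for large $|\lambda|$, it sets $q=\log p$ (a branch exists on each of the two simply connected components), writes $q=r+is$, and extends $r$ smoothly across the forbidden cone subject to the constraint $\tilde r(\lambda)\ge \frac{m}{d+1}\log|\lambda|$. Then $\tilde p=\exp(\tilde r+i\tilde s)$ satisfies $|\tilde p|=e^{\tilde r}\ge|\lambda|^{m/(d+1)}$ automatically, with no possibility of cancellation. Your additive convex combination cannot be repaired by adjusting $\chi$ alone; the fix genuinely requires controlling the modulus of the extension directly, as the paper does.
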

This proposition follows immediately from the following lemma which
says that a quasielliptic symbol equals an elliptic symbol outside
a cone contained in the spectral gaps. Note that even if $p(-\Delta)$
is a differential operator, the elliptic operator is only a pseudo-differential
operator.
\begin{lem}
\label{lem:quasi_is_el}Let $p$ be a quasielliptic $m$-symbol. Then
there is an elliptic $m$-symbol $\tilde{p}$ such that $p(\lambda)=\tilde{p}(\lambda)$
for all $\lambda\in\Gamma$ and all $\Gamma\in\mathcal{G}$.\end{lem}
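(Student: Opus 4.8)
The plan is to modify $p$ only inside the collection $\mathfrak{G}$ of cones contained in the spectral gaps, replacing it there by a function that makes the resulting symbol $\tilde p$ satisfy the ellipticity estimate \eqref{eq:elliptic} everywhere with $|\lambda|$ large. First I would observe that, since $p$ is a quasielliptic $m$-symbol, for each $\Gamma\in\mathcal G$ there is a constant $A_\Gamma$ so that $|p(\lambda)|\ge c|\lambda|^{m/(d+1)}$ on $\Gamma\cap\{|\lambda|\ge A_\Gamma\}$. The key point is that $\mathcal G$ and $\mathfrak G$ are complementary families of cones: every pair of eigenvalues of $-\Delta$ lies in some $\Gamma\in\mathcal G$, and no eigenvalue pair lies in any cone of $\mathfrak G$. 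So on the cones of $\mathfrak G$ we are free to alter $p$ without changing the operator $p(-\Delta)$ at all, since the spectral projections $P_\lambda$ are supported on eigenvalue pairs. (This is exactly the sense in which a pseudo-differential operator ``lives on $\mathcal G$'', and it is what makes Proposition \ref{pro:quasielliptic=00003Delliptic} follow.)

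Next I would construct the modification. Fix a smooth radially homogeneous partition of unity subordinate to an open cover of $\mathbb R_+^2$ (for $N=2$; the general case is identical) by finitely many cones: one can cover $\mathbb R_+^2$ by a finite union of cones $\Gamma_1,\dots,\Gamma_M\in\mathcal G$ together with finitely many slightly enlarged cones $\widetilde\Gamma\in\mathfrak G$ whose closures still miss every eigenvalue pair. Let $\{\chi_j\}$ be a smooth partition of unity with $\chi_j$ homogeneous of degree $0$ and supported in the corresponding cone. On the cones belonging to $\mathcal G$ keep $p$ unchanged; on each enlarged gap cone $\widetilde\Gamma$ replace $p$ by the constant function $|\lambda|^{m/(d+1)}$ (smoothed near the origin so that it is a genuine $S^m$-symbol, e.g. $(1+|\lambda|^2)^{m/(2(d+1))}$). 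Setting $\tilde p = \sum_j \chi_j\cdot(\text{the chosen local model})$ gives a smooth function which agrees with $p$ on every $\Gamma\in\mathcal G$, because near such a cone only the pieces where the local model is $p$ contribute. The Marcinkiewicz/Hörmander derivative bounds \eqref{eq:symbol_mul_m} for $\tilde p$ follow from those for $p$, those for the model $(1+|\lambda|^2)^{m/(2(d+1))}$, and the fact that each $\chi_j$ is $0$-homogeneous so that $\lambda^\alpha\partial^\alpha\chi_j$ is bounded; one uses Leibniz as in the proof of Proposition \ref{lem:Symboliccalculus}.

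Finally I would verify ellipticity of $\tilde p$. For $|\lambda|\ge A:=\max_\Gamma A_\Gamma$ one of two things happens at the point $\lambda$: either $\lambda$ lies in (the support region of) a model piece equal to $p$ on some $\Gamma\in\mathcal G$, where the quasielliptic estimate gives $|\tilde p(\lambda)|=|p(\lambda)|\gtrsim|\lambda|^{m/(d+1)}$ directly, or $\lambda$ lies where the gap-cone model $(1+|\lambda|^2)^{m/(2(d+1))}\gtrsim|\lambda|^{m/(d+1)}$ dominates. In the transition region where several $\chi_j$ overlap one must check the convex-type combination does not cancel; since all the local models have argument close to that of a positive real multiple of $|\lambda|^{m/(d+1)}$ (the quasielliptic values and the positive model never differ much in phase once $A$ is taken large — and if one is worried about phase cancellation one may instead transport the modification multiplicatively, writing $\tilde p = p\cdot\prod(\text{smooth cutoffs})+\text{(correction supported in }\mathfrak G)$), one gets $|\tilde p(\lambda)|\gtrsim|\lambda|^{m/(d+1)}$ for $|\lambda|\ge A$. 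Hence $\tilde p$ is an elliptic $m$-symbol agreeing with $p$ on all of $\mathcal G$, which is what was claimed.

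The main obstacle I expect is precisely this last point: ensuring no phase cancellation where the (complex-valued) quasielliptic symbol and the inserted model overlap. The cleanest fix is to do the surgery multiplicatively rather than additively — cut $p$ off smoothly to zero inside a slightly shrunken gap cone and add back a positive bump of size $|\lambda|^{m/(d+1)}$ supported well inside that cone — so that on the overlap region $\tilde p$ is just $p$ times a positive cutoff plus a term whose modulus one controls from below. Everything else (the partition of unity estimates, the $S^m$ bounds, the fact that altering $p$ on $\mathfrak G$ does not change $p(-\Delta)$) is routine given Propositions \ref{lem:Symboliccalculus} and the structure of $\mathcal G$.
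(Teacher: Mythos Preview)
Your approach is genuinely different from the paper's, and the difficulty you flag at the end is real and is not actually cured by your proposed fix. In the ``multiplicative surgery'' $\tilde p=\chi p+b$, the bump $b$ must vanish outside the gap cone (so that $\tilde p=p$ on every $\Gamma\in\mathcal G$), which forces $b$ to be small on a collar just inside the gap boundary; on that same collar $\chi$ is close to $1$ but you have \emph{no} lower bound on $|p|$ there, because the collar lies inside a cone of $\mathfrak G$ and the quasielliptic hypothesis says nothing about $|p|$ on such cones. Hence $|\tilde p|$ can be arbitrarily small on the collar for large $|\lambda|$, and ellipticity fails. Shifting where the transitions happen only moves the problem: at some angular region either $\chi p$ alone or $b$ alone (or a cancelling sum of the two) controls $\tilde p$, and none of these is bounded below by $c|\lambda|^{m/(d+1)}$. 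An additive partition of unity cannot propagate a \emph{modulus} lower bound across a zone where the original function is uncontrolled.

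The paper sidesteps this by working with $\log p$ instead of $p$. On $\Gamma_A=\{\lambda\in\Gamma:|\lambda|>A\}$, where $\Gamma=\mathbb R_+^2\setminus\Gamma_{a,\varepsilon}$ is the complement of a maximal gap cone, $p$ is nonvanishing and $\Gamma_A$ is a union of two simply connected pieces, so a branch $q=\log p=r+is$ exists and $r=\log|p|\ge\frac{m}{d+1}\log|\lambda|+\log c$. One then extends the real part $r$ smoothly across the gap so that the \emph{additive} inequality $\tilde r\ge\frac{m}{d+1}\log|\lambda|$ persists (this is a routine scalar extension problem, e.g.\ via $\tilde r=L+\phi(r_0-L)$ with $\phi\ge0$, $\phi(t)=t$ for $t\ge0$), extends $s$ arbitrarily, and sets $\tilde p=\exp(\tilde r+i\tilde s)$. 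The multiplicative lower bound $|\tilde p|=e^{\tilde r}\ge c|\lambda|^{m/(d+1)}$ is then automatic, with no cancellation issue because modulus and argument have been decoupled. This is exactly the missing idea in your argument: convert the multiplicative lower bound to an additive one via the logarithm, do the surgery there, and exponentiate back.
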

\begin{proof}
Let $a\in(\alpha,\beta)$ and $\varepsilon>0$ such that the cone
$\Gamma_{a,\varepsilon}$ is a maximal cone in $\mathfrak{G}$ and
let $\Gamma=\mathbb{R}_{+}^{2}\setminus\Gamma_{a,\varepsilon}$. Then
$\Gamma$ is a union of two simply connected cones. Since $p$ is
quasielliptic, there are $c>0$ and $A>0$ such that $\vert p(\lambda)\vert\ge c\vert\lambda\vert$
for all $\lambda\in\Gamma$ with $\vert\lambda\vert>A$. Since $\Gamma_{A}:=\{\lambda\in\Gamma\,:\,\vert\lambda\vert>A\}$
is a union of two disjoint simply connected sets, we can define the
function $q(\lambda)=\log p(\lambda)$. Let $r(\lambda)$ and $s(\lambda)$
be the real part and the imaginary part, respectively, of $q(\lambda)$.
By hypothesis, $r(\lambda)\ge\frac{m}{d+1}\log\vert\lambda\vert$
for all $\lambda\in\Gamma_{A}$. Let $\tilde{r}(\lambda)$ and $\tilde{s}(\lambda)$
be two smooth extensions of $r(\lambda)$ and $s(\lambda)$ such that
$\tilde{r}(\lambda)\ge\frac{m}{d+1}\log\vert\lambda\vert$ for all
$\lambda\in\mathbb{R}_{+}^{2}$ with $\vert\lambda\vert>A$. Let $\tilde{p}(\lambda)=\exp(\tilde{r}(\lambda)+i\tilde{s}(\lambda))$
and extend it so that $\tilde{p}(\lambda)=p(\lambda)$ for all $\lambda\in\Gamma$.
Then $\tilde{p}$ is the desired elliptic $m$-symbol.
\end{proof}

\begin{proof}
[Proof of Proposition \ref{pro:quasielliptic=00003Delliptic}] Let
$q$ be an elliptic symbol $q$ such that $p(\lambda)=q(\lambda)$
for all $\lambda\in\Gamma$ and all $\Gamma\in\mathcal{G}$. Then
$q(-\Delta)$ is an elliptic pseudo-differential operator. Moreover, since $p(\lambda)=q(\lambda)$
for all $\lambda\in\Lambda_{1}\times\Lambda_{2}$ (where $\Lambda_{i}$
is the spectrum of $-\Delta_{i})$, it follows from the definition
that $p(-\Delta)=q(-\Delta)$. Therefore $p(-\Delta)$ is equal to
an elliptic pseudo-differential operator.
\end{proof}
As an immediate consequence of the above Proposition and Theorem \ref{thm:elliptic_hyppoelliptic}
we obtain the following corollary, which answers an open question
posed in \cite{RogStr-2009_distr}.
\begin{cor}
A quasielliptic operator is hypoelliptic.
\end{cor}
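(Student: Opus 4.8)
The plan is to deduce this directly by chaining the two results that immediately precede it, so the proof will be essentially one line of bookkeeping. Given a quasielliptic operator $p(-\Delta)$, I would first invoke Proposition \ref{pro:quasielliptic=00003Delliptic} to replace it by an honest elliptic pseudo-differential operator: there is an elliptic symbol $q$ with $p(-\Delta)=q(-\Delta)$ as operators on $\D'(X)$. The substance of that step is Lemma \ref{lem:quasi_is_el}, which alters $p$ only on a cone $\Gamma_{a,\varepsilon}$ sitting inside a spectral gap, exploiting that the complement $\mathbb{R}_+^2\setminus\Gamma_{a,\varepsilon}$ splits into two simply connected cones on which $\log p$ is well defined and can be smoothly extended; since $\Lambda_1\times\Lambda_2$ avoids $\Gamma_{a,\varepsilon}$, the two operators coincide on the spectrum and hence as operators.

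Next I would apply Theorem \ref{thm:elliptic_hyppoelliptic}, which asserts that an elliptic pseudo-differential operator is hypoelliptic, i.e. $\singsupp(q(-\Delta)u)=\singsupp(u)$ for every $u\in\D'(X)$. Combining the two, for any distribution $u$ we get
\[
\singsupp(p(-\Delta)u)=\singsupp(q(-\Delta)u)=\singsupp(u),
\]
which is precisely the statement that $p(-\Delta)$ is hypoelliptic.

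I do not expect any genuine obstacle here: once Proposition \ref{pro:quasielliptic=00003Delliptic} and Theorem \ref{thm:elliptic_hyppoelliptic} are available, the corollary is a formal consequence. The only point to keep an eye on is the identification of "quasielliptic operator" with "operator associated to a quasielliptic symbol," together with the fact that agreement of two symbols on $\Lambda_1\times\Lambda_2$ already forces equality of the corresponding pseudo-differential operators; but this is exactly the observation used at the end of the proof of Proposition \ref{pro:quasielliptic=00003Delliptic}, so nothing new is required.
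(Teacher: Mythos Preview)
Your proposal is correct and matches the paper's approach exactly: the paper states that the corollary is an immediate consequence of Proposition~\ref{pro:quasielliptic=00003Delliptic} and Theorem~\ref{thm:elliptic_hyppoelliptic}, which is precisely the chain you describe. Your additional remarks about Lemma~\ref{lem:quasi_is_el} and the spectral-gap agreement are accurate but already absorbed into the proof of the proposition, so they need not appear in the corollary's proof.
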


\section{\label{sec:rho_Hormander}$\rho$-type symbols and pseudodifferential
operators}

In this section we define and study pseudo-differential operators
for which the derivatives of the symbols have a slower rate of decay.
These operators still have a kernel that is smooth off the diagonal,
even though they might not be bounded on $L^{q}$-spaces, as the well
known examples in the Euclidian setting show. As an application, we
study the so called H\"ormander type hypoelliptic operators and 
prove that they are hypoelliptic in this more general setting.  In the following we use
the set-up of Section \ref{sec:Symbols-and-pseudo-differential}.
The definitions and main theorem of this section can be extended without
difficulty to the product setting so we omit the details (see Remark \ref{Note-product}). We will use the fact that the
equivalent of Theorem \ref{thm:smooth_kernel-1} holds for products
of measure metric spaces in Subsection \ref{sub:Hrmander-type-hypoelliptic}.
\begin{defn}
For fixed $m\in\mathbb{R}$ and $0\le\rho\le1$ define the symbol
class $S_{\rho}^{m}$ to be the set of $p\in C^{\infty}((0,\infty))$
with the property that for any $k\ge0$ there is $C_{k}(\rho)>0$
such that
\[
\left\vert \bigl(\lambda^{\rho}\frac{d}{d\lambda}\bigr)^{k}p(\lambda)\right\vert \le C_{k}(\rho)(1+\lambda)^{\frac{m}{d+1}}
\]
for all $\lambda>0$.
\end{defn}

\begin{defn}
Define the operator class $\Psi DO_{\rho}^{m}$ by
\[
p(-\Delta)u=\int_{0}^{\infty}p(\lambda)P(\lambda)u
\]
for $p\in S_{\rho}^{m}$ and $u\in\mathcal{D}$.\end{defn}
\begin{thm}
\label{thm:smooth_kernel-1}Let $\frac{1}{\gamma+1}<\rho\le1$ and let $p:(0,\infty)\to\mathbb{C}$
be an $S_{\rho}^{0}$-symbol, that is, for each $k\ge0$ there is
$C_{k}(\rho)>0$ such that
\begin{equation}
\left\vert \lambda^{\rho k}\frac{\partial^{k}}{\partial\lambda^{k}}p(\lambda)\right\vert \le C_{k}(\rho).\label{eq:symbol-1}
\end{equation}
Then $p(-\Delta)$ has a kernel $K(x,y)$ that is smooth off the diagonal
of $X\times X$.\end{thm}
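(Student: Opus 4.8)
The plan is to run the argument of Theorem~\ref{thm:smooth_kernel} essentially verbatim, keeping careful track of how the weaker derivative bounds permitted by $S_\rho^0$ propagate through the estimates. I would start from the same Littlewood--Paley decomposition $p=\sum_{n\in\mathbb{Z}}p_n$ with $p_n(\lambda)=p(\lambda)\delta(2^{-n}\lambda)$, so $\supp p_n\subseteq[2^{n-1},2^{n+1}]$. From \eqref{eq:symbol-1} one has $|p^{(k)}(\lambda)|\lesssim\lambda^{-\rho k}$ for $\lambda\ge1$, and a Leibnitz estimate then gives the substitute for \eqref{eq:estimates_pn}: for $n\ge1$,
\[
\Bigl|\tfrac{d^k}{d\lambda^k}p_n(\lambda)\Bigr|\lesssim 2^{-n\rho k},
\]
whereas for $n\le0$ the bound $2^{-nk}$ holds exactly as in Theorem~\ref{thm:smooth_kernel} (the single piece $p_0$ is supported on the fixed compact set $[\tfrac12,2]$ and is harmless).

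Next I would perform the same rescaling $f_n(\lambda)=p_n(2^n\lambda)e^{\lambda}$, $\supp f_n\subseteq[\tfrac12,2]$. The key new feature is that its derivatives are no longer bounded uniformly in $n$: for $n\ge1$ the chain rule only gives $|f_n^{(k)}(\lambda)|\lesssim 2^{nk(1-\rho)}$ (the factor $2^{nk}$ is not fully cancelled when $\rho<1$), while for $n\le0$ one still recovers $|f_n^{(k)}(\lambda)|\lesssim 1$. Hence, for $n\ge1$, the Fourier transform obeys $|\widehat{f}_n(\xi)|\lesssim \overline{D}_{2j}\,2^{2nj(1-\rho)}(1+\xi^2)^{-j}$ for every $j$, with $\overline{D}_{2j}$ independent of $n$. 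The identities \eqref{eq:pn_Delta}--\eqref{eq:kern_pn} are unchanged, so $p_n(-\Delta)$ has kernel $K_n(x,y)=\frac1{2\pi}\int\widehat{f}_n(\xi)\,h_{2^{-n}-i\xi 2^{-n}}(x,y)\,d\xi$, and \eqref{eq:heat_ker_est2} yields, for $n\ge1$,
\[
|K_n(x,y)|\lesssim 2^{n\left(\frac{d}{d+1}+2j(1-\rho)\right)}\int\exp\Bigl(-\tfrac{c\gamma}{2}R(x,y)^{(d+1)\gamma}2^{n\gamma}(1+\xi^2)^{-\frac{\gamma+1}{2}}\Bigr)\frac{d\xi}{(1+\xi^2)^{j}}.
\]

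The summation over $n$ is where the hypothesis $\rho>\frac1{\gamma+1}$ enters. The part $\sum_{n\le0}$ is controlled exactly as in Theorem~\ref{thm:smooth_kernel}, by Lemma~\ref{lem:ineq} with $\alpha=d$. For $\sum_{n\ge1}$ I would apply Lemma~\ref{lem:ineq} with $\alpha=d+2j(1-\rho)(d+1)$, chosen so that $2^{n\alpha/(d+1)}$ is the power of $2^n$ above; although $\alpha$ now depends on $j$, the admissibility condition $j\ge\frac{\alpha(\gamma+1)}{2\gamma(d+1)}+\tfrac12$ simplifies to $j\bigl(1-\frac{(1-\rho)(\gamma+1)}{\gamma}\bigr)\ge\frac{d(\gamma+1)}{2\gamma(d+1)}+\tfrac12$, and the coefficient of $j$ on the left is \emph{strictly positive exactly when} $\rho>\frac1{\gamma+1}$; thus the condition holds for all large $j$, and $K(x,y)=\sum_n K_n(x,y)$ converges locally uniformly off the diagonal to the kernel of $p(-\Delta)$ by \eqref{eq:psumpn}. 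To prove smoothness I would rerun this with $q_n(\lambda)=\lambda^{l+k}p_n(\lambda)$ and $g_n(\lambda)=2^{-n(l+k)}q_n(2^n\lambda)e^{\lambda}$; one checks $|g_n^{(i)}(\lambda)|\lesssim 2^{ni(1-\rho)}$ for $n\ge1$ and $\lesssim1$ for $n\le0$, so that the only change is that $\Delta_x^l\Delta_y^k K_n$ carries an extra factor $2^{n(l+k)}$, raising $\alpha$ to $(l+k)(d+1)+2j(1-\rho)(d+1)+d$; this is still admissible for $j$ large (depending on $l,k$) under the same condition on $\rho$, so $\Delta_x^l\Delta_y^k K$ exists and is continuous off the diagonal for all $l,k$.

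The main obstacle is precisely this loss in the derivative bounds. Unlike in Theorem~\ref{thm:smooth_kernel}, the rescaled bumps $f_n$ do not have $n$-uniformly bounded derivatives, so $\widehat{f}_n$ only decays with a constant growing like $2^{2nj(1-\rho)}$, and one must verify that the sub-Gaussian decay of the complex heat kernel still dominates. The computation inside the proof of Lemma~\ref{lem:ineq} makes the heat-kernel gain explicit as a factor $\sim(2^{n\gamma})^{(1-2j)/(\gamma+1)}$, and balancing the two geometric rates is exactly what forces $\rho>\frac1{\gamma+1}$; the only point requiring real care is that this balance remains favorable after the extra polynomial-in-$2^n$ factors produced by the derivatives $\Delta_x^l\Delta_y^k$ are taken into account, which is handled by enlarging $j$.
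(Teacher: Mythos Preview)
Your argument is correct and follows the same overall strategy as the paper: Littlewood--Paley decomposition, Fourier inversion against the complex heat kernel, and summation via Lemma~\ref{lem:ineq}, with the threshold $\rho>\frac{1}{\gamma+1}$ emerging exactly from the solvability of the condition on $j$.

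The one (cosmetic) difference is the choice of rescaling. The paper sets $\widetilde p_n(\lambda)=p_n(2^{n\rho}\lambda)$ and $f_n(\lambda)=\widetilde p_n(\lambda)e^{\lambda/2^{n(1-\rho)}}$, which keeps the derivatives of $f_n$ uniformly bounded in $n$ but moves $\supp f_n$ to $[2^{n(1-\rho)-1},2^{n(1-\rho)+1}]$ and forces a substitution $\xi\mapsto 2^{n(\rho-1)}\xi$ in the Fourier integral; the growth factor $2^{2nj(1-\rho)}$ then re-enters through the bound on $\widehat f_n(2^{n(\rho-1)}\xi)$ together with an extra Jacobian $2^{n(\rho-1)}$. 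You instead keep the rescaling $f_n(\lambda)=p_n(2^n\lambda)e^\lambda$ of Theorem~\ref{thm:smooth_kernel} and absorb the loss directly into $|\widehat f_n(\xi)|\lesssim 2^{2nj(1-\rho)}(1+\xi^2)^{-j}$. The net effect is the same: your $\alpha=d+2j(1-\rho)(d+1)$ versus the paper's $\alpha=d+(2j-1)(1-\rho)(d+1)$, an immaterial shift that leaves the condition on $j$ unchanged. Your explicit separation of the regimes $n\ge1$ and $n\le0$ is also a nice touch, since the bound $|p_n^{(k)}|\lesssim 2^{-n\rho k}$ is really only the right one for $n\ge0$.
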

\begin{proof}
The proof of this theorem follows in large the proof of Theorem \ref{thm:smooth_kernel}.
We will present the main steps of the proof skipping the computations
that are very similar. Let $\delta$ be the function from the Littlewood-Paley
decomposition and let $p_{n}(\lambda)=p(\lambda)\delta(2^{-n}\lambda)$.
Then $\supp p_{n}\subseteq[2^{n-1},2^{n+1}]$ and
\[
\left\vert \frac{d^{k}}{d\lambda^{k}}p_{n}(\lambda)\right\vert \lesssim2^{-n\rho k}\;\text{for all}\; k\ge0.
\]
Define $\widetilde{p}_{n}(\lambda)=p_{n}(2^{n\rho}\lambda)$. Then
$\supp\widetilde{p}_{n}\subseteq[2^{n(1-\rho)-1},2^{n(1-\rho)+1}]$
and for each $k\ge0$ there is a constant $C_{k}(\rho)>0$ such that
\[
\left\vert \frac{d^{k}\widetilde{p}_{n}(\lambda)}{d\lambda^{k}}\right\vert \le C_{k}(\gamma).
\]
Let $f_{n}(\lambda)=\widetilde{p}_{n}(\lambda)e^{\frac{\lambda}{2^{n(1-\rho)}}}$.
Since $e^{\frac{\lambda}{2^{n(1-\rho)}}}\le e^{2}$ for all $\lambda\in[2^{n(1-\rho)-1},2^{n(1-\rho)+1}]$,
it follows that for each $k\ge 0$ there is a constant
$A_k(\gamma)>0$ independent on $n$ such that
\[
\left\vert \frac{d^{k}f_{n}(\lambda)}{d\lambda^{k}}\right\vert
\le A_k(\gamma).
\]
Therefore
\[
\vert\xi^{k}\widehat{f}_{n}(\xi)\vert\lesssim 1\;\text{for all}\; k\ge0.
\]
In particular we have that for each $k\ge 0$ there is a constant
$\overline{D}_k(\gamma)$ independent on $n$ such that
\begin{equation}
\vert\widehat{f}_{n}(2^{n(\rho-1)}\xi)\vert\le\frac{\overline{D}_k(\gamma)2^{2nk(1-\rho)}}{(1+\xi^{2})^{k}}\;\text{for 
  all}\; k\ge0.\label{eq:fourier_f_n} 
\end{equation}
Using the Fourier inversion formula we have that
\[
p_{n}(2^{n\rho}\lambda)e^{\frac{2^{n\rho}\lambda}{2^{n}}}=\frac{1}{2\pi}\int\widehat{f}_{n}(\xi)e^{i\lambda\xi}d\xi.
\]
Therefore
\begin{eqnarray*}
p_{n}(\lambda) & = & \frac{1}{2\pi}\int\widehat{f}_{n}(\xi)e^{-\lambda\left(\frac{1}{2^{n}}-i\frac{\xi}{2^{n\rho}}\right)}d\xi\\
 & = & \frac{1}{2\pi}\int\widehat{f}_{n}(\xi)e^{-\frac{\lambda}{2^{n}}\left(1-i\frac{\xi}{2^{n(\rho-1)}}\right)}d\xi,
\end{eqnarray*}
which, using the substitution $u=\xi/2^{n(\rho-1)}$ becomes
\[
p_{n}(\lambda)=\frac{2^{n(\rho-1)}}{2\pi}\int\widehat{f}_{n}(2^{n(1-\rho)}\xi)e^{-\frac{\lambda}{2^{n}}(1-i\xi)}d\xi.
\]
Thus the kernel of $p_{n}(-\Delta)$ equals
\[
K_{n}(x,y)=\frac{2^{n(\rho-1)}}{2\pi}\int\widehat{f}_{n}(2^{n(1-\rho)}\xi)h_{\frac{1}{2^{n}}-i\frac{\xi}{2^{n}}}(x,y)d\xi.
\]
Using the estimates \eqref{eq:heat_ker_est2} and
\eqref{eq:fourier_f_n} we obtain that
\begin{align*}
\vert K_n(x,y)\vert &\lesssim 2^{\frac{n}{d+1}(d+(1-\rho)(d+1)(2j-1))}
\\
& \cdot  \int\exp\left(-\frac{c\gamma}{2}R(x,y)^{(d+1)\gamma}2^{n\gamma}(1+\xi^{2})^{-\frac{\gamma+1}{2}}\right)\frac{1}{(1+\xi^{2})^{j}}d\xi,
\end{align*}
for all $j\ge 0$.
Lemma \ref{lem:ineq} implies that if we pick $j$ such that
\[
j \ge \frac{(d+(1-\rho)(d+1)(2j-1)(\gamma+1)}{2\gamma(d+1)}+\frac{1}{2}
\]
then $K(x,y)=\sum_{n}K_{n}(x,y)$
converges. We can pick a positive $j$ in the equation above provided
that $\rho>\frac{1}{\gamma+1}$. Then it follows that the kernel $K$ satisfies the estimates
\[
\vert K(x,y)\vert\lesssim R(x,y)^{-\frac{d\gamma}{\rho(\gamma+1)-1}}.
\]
A similar argument for $q(\lambda)=\lambda^{j}p(\lambda)$, $j\ge1$,
shows that $K$ is smooth off the diagonal.\end{proof}
\begin{cor}
If $\frac{1}{\gamma+1}<\rho\le 1$ and $p\in S_{\rho}^{m}$ then the kernel of $p(-\Delta)$ is smooth
off the diagonal.\end{cor}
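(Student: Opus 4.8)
The plan is to follow the reduction used for Corollary~\ref{cor:smooth_kernel_m}, with one small adjustment: here it is convenient to factor out an \emph{integer} power of $(I-\Delta)$, so that the operator used to recover the kernel of $p(-\Delta)$ from that of an $S_{\rho}^{0}$-symbol is a genuine differential operator and manifestly preserves off-diagonal smoothness. First I would fix a non-negative integer $n$ with $n(d+1)\ge m$ (so $n=0$ is allowed when $m\le0$) and set $q(\lambda)=(1+\lambda)^{-n}p(\lambda)$.

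The key point is that $q\in S_{\rho}^{0}$. Since $\lambda^{\rho}\frac{d}{d\lambda}$ is a derivation, the Leibniz rule gives
\[
\Bigl(\lambda^{\rho}\frac{d}{d\lambda}\Bigr)^{k}q(\lambda)=\sum_{j=0}^{k}\binom{k}{j}\Bigl(\Bigl(\lambda^{\rho}\frac{d}{d\lambda}\Bigr)^{j}p(\lambda)\Bigr)\Bigl(\lambda^{\rho}\frac{d}{d\lambda}\Bigr)^{k-j}(1+\lambda)^{-n},
\]
so it suffices to bound the two factors. The first is at most $C_{j}(\rho)(1+\lambda)^{m/(d+1)}$ by hypothesis. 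For the second, an easy induction on $k$ shows that $\bigl(\lambda^{\rho}\frac{d}{d\lambda}\bigr)^{k}(1+\lambda)^{-n}$ is a finite sum of terms $c\,\lambda^{a}(1+\lambda)^{-n-b}$ with $0\le a\le b$; the inductive step uses only $\rho\le1$ (this is precisely where that hypothesis enters), and each such term is $\lesssim(1+\lambda)^{-n}$ on $(0,\infty)$. Multiplying, $\bigl|\bigl(\lambda^{\rho}\frac{d}{d\lambda}\bigr)^{k}q(\lambda)\bigr|\lesssim(1+\lambda)^{m/(d+1)-n}\le1$ by the choice of $n$, so $q\in S_{\rho}^{0}$.

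Now I would invoke Theorem~\ref{thm:smooth_kernel-1} — whose hypothesis $\frac{1}{\gamma+1}<\rho\le1$ is exactly the one assumed in the corollary — to conclude that $q(-\Delta)$ is given by integration against a kernel $K_{q}(x,y)$ that is smooth off the diagonal of $X\times X$. Since $p(\lambda)=(1+\lambda)^{n}q(\lambda)$, on the dense set $\mathcal{D}$ we have $p(-\Delta)=(I-\Delta)^{n}q(-\Delta)$ by the spectral theorem, and $(I-\Delta)^{n}$ is an honest differential operator because $n\in\mathbb{N}$. Arguing exactly as in the proof of Corollary~\ref{cor:smooth_kernel_m}, it follows that $K_{p}(x,y)=(I-\Delta_{x})^{n}K_{q}(x,y)$ is the kernel of $p(-\Delta)$, and applying the differential operator $(I-\Delta_{x})^{n}$ preserves smoothness off the diagonal; this proves the corollary. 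There is no genuine obstacle here: the only thing to get right is the integer-power reduction, together with the elementary observation that $\rho\le1$ makes $(1+\lambda)^{-n}$ a symbol of order $-n(d+1)$ in the $S_{\rho}$ scale, everything else being a transcription of the $m=0$ case.
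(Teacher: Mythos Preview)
Your strategy is exactly what the paper has in mind (it says only that the argument is ``very similar'' to that of Corollary~\ref{cor:smooth_kernel_m}), and passing to an \emph{integer} power $n$ so that $(I-\Delta)^{n}$ is a genuine differential operator is a sensible refinement.  There is, however, a slip in your induction: the claim that every term in $\bigl(\lambda^{\rho}\frac{d}{d\lambda}\bigr)^{k}(1+\lambda)^{-n}$ has the form $c\,\lambda^{a}(1+\lambda)^{-n-b}$ with $0\le a\le b$ is false when $\rho<1$.  Applying $\lambda^{\rho}\frac{d}{d\lambda}$ to $\lambda^{a}(1+\lambda)^{-n-b}$ produces, among others, the term $a\,\lambda^{a+\rho-1}(1+\lambda)^{-n-b}$, so the exponent of $\lambda$ can drop by $1-\rho>0$ at each step; iterating this branch from the $k=1$ term $\lambda^{\rho}(1+\lambda)^{-n-1}$ one reaches $a=k\rho-(k-1)$, which is negative once $k>\frac{1}{1-\rho}$.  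Already at $k=2$ you get $-n\rho\,\lambda^{2\rho-1}(1+\lambda)^{-n-1}$, unbounded as $\lambda\to0^{+}$ whenever $\rho<\tfrac12$ (and the hypothesis $\rho>\frac{1}{\gamma+1}$ allows this when $\gamma>1$).  Hence $q=(1+\lambda)^{-n}p$ need not lie in $S_{\rho}^{0}$, and the bound ``each such term is $\lesssim(1+\lambda)^{-n}$ on $(0,\infty)$'' fails near the origin.

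The repair is painless, because the inequality $a\le b$ \emph{is} preserved by the induction --- that is the correct use of $\rho\le1$ --- and $a\le b$ alone already gives $|\lambda^{a}(1+\lambda)^{-n-b}|\lesssim(1+\lambda)^{-n}$ for $\lambda\ge1$.  Choose a smooth cutoff $\chi$ with $\chi\equiv0$ on $(0,1]$ and $\chi\equiv1$ on $[2,\infty)$ and split $p=(1-\chi)p+\chi p$.  The first summand lies in $S_{\rho}^{0}$ (it is supported where $(1+\lambda)^{m/(d+1)}$ is bounded, and the Leibniz terms involving derivatives of $1-\chi$ live on $[1,2]$), so Theorem~\ref{thm:smooth_kernel-1} applies to it directly.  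For the second, set $q_{1}=\chi\,(1+\lambda)^{-n}p$; this is supported on $[1,\infty)$, so your Leibniz estimate now goes through and gives $q_{1}\in S_{\rho}^{0}$, whence $(\chi p)(-\Delta)=(I-\Delta)^{n}q_{1}(-\Delta)$ has smooth kernel off the diagonal exactly as you argued.
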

\begin{proof}
The proof is very similar with that of Corollary \eqref{cor:smooth_kernel_m}.\end{proof}
\begin{rem}
\label{Note-product}Note that for $\rho<1$ the operators $p(-\Delta)$
are \emph{not} Calder\'on-Zygmund operators and they might not extend
to $L^{q}$ for all values of $q$. Also, one can easily extend the
definition to product of fractals and adapt the proof of Theorem \ref{thm:smooth_ker_mul}
to show that the kernel of a pseudo-differential operator of type
$\rho$ with $\frac{1}{\gamma+1}<\rho\le1$ is smooth off the diagonal.
\end{rem}

\subsection{\label{sub:Hrmander-type-hypoelliptic}H\"ormander type hypoelliptic
operators}

Let $X$ be a product of $N$ measure metric space as in the setting
of Section \ref{sec:products}. Recall that we write $\Delta=(\Delta_{1},\dots,\Delta_{N})$.
\begin{defn}
(\cite{hormander1985analysis}) We say that a smooth map $p:(0,\infty)^{N}\to\mathbb{C}$
is a H\"ormander type hypoelliptic symbol if there are $\varepsilon>0$
and $A>0$ such that
\begin{equation}
\left\vert \frac{\frac{\partial^{\alpha}}{\partial\lambda^{\alpha}}p(\lambda)}{p(\lambda)}\right\vert \le c_{\alpha}\vert\lambda\vert^{-\varepsilon\vert\alpha\vert}\text{ for }\vert\lambda\vert\ge A,\label{eq:Hormander}
\end{equation}
where $c_{\alpha}$ are postive constants for all $\alpha\in\mathbb{N}^{N}$. \end{defn}
\begin{thm}
Suppose that $p$ is a H\"ormander type hypoelliptic symbol with $\varepsilon>\frac{1}{\gamma+1}$. Then
$p(-\Delta)$ is hypoelliptic.\end{thm}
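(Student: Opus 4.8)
The plan is to construct a parametrix for $p(-\Delta)$, exactly as in the proof of Theorem \ref{thm:elliptic_hyppoelliptic} for the elliptic case, the one new feature being that the symbol of the parametrix is not a classical symbol but a $\rho$-type symbol with $\rho=\varepsilon$; the hypothesis $\varepsilon>\frac{1}{\gamma+1}$ is precisely what permits the (product version of the) kernel regularity Theorem \ref{thm:smooth_kernel-1} to be applied to it. First I record that a H\"ormander type hypoelliptic symbol is, as in the classical theory, bounded above and below away from a compact set — $c(1+|\lambda|)^{m_{0}/(d+1)}\le|p(\lambda)|\le C(1+|\lambda|)^{m_{1}/(d+1)}$ for $|\lambda|\ge A$ — and, being a symbol, is polynomially bounded overall. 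Together with \eqref{eq:Hormander} this gives $|\lambda^{\varepsilon\alpha}\partial^{\alpha}p(\lambda)|\lesssim(1+\lambda)^{m_{1}/(d+1)}$, i.e. $p\in S_{\varepsilon}^{m_{1}}$ in the product sense; since $\varepsilon>\frac{1}{\gamma+1}$, Remark \ref{Note-product} shows that $p(-\Delta)$ has a kernel smooth off the diagonal, so the argument of Theorem \ref{pro:pseudo-local} yields $\singsupp(p(-\Delta)u)\subseteq\singsupp(u)$. It remains to prove the reverse inclusion.

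Because $|p(\lambda)|\neq0$ for $|\lambda|\ge A$, the zero set of $p$ is compact. Choose $\xi\in C^{\infty}((0,\infty)^{N})$ that vanishes on a neighbourhood of the zeros of $p$ and equals $1$ for $|\lambda|\ge A$ (enlarging $A$ if necessary so that $p$ does not vanish on $\operatorname{supp}\xi$), and set $p_{1}=\xi/p$, so $p_{1}$ is smooth and $p\,p_{1}\equiv1$ for $|\lambda|\ge A$. Differentiating the quotient by the Fa\`a di Bruno type formula already used in Theorem \ref{thm:elliptic_hyppoelliptic}, for $|\lambda|\ge A$ the derivative $\partial^{\alpha}p_{1}$ is a finite sum of terms $\frac{1}{p(\lambda)}\prod_{i}\frac{\partial^{\alpha^{i}}p(\lambda)}{p(\lambda)}$ with $\sum_{i}\alpha^{i}=\alpha$, together with terms involving derivatives of $\xi$ supported in a bounded region. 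Applying \eqref{eq:Hormander} to each quotient and the lower bound on $|p|$ gives $|\partial^{\alpha}p_{1}(\lambda)|\lesssim(1+|\lambda|)^{-m_{0}/(d+1)}|\lambda|^{-\varepsilon|\alpha|}$, and hence $|\lambda^{\varepsilon\alpha}\partial^{\alpha}p_{1}(\lambda)|\lesssim(1+\lambda)^{-m_{0}/(d+1)}$ since $\lambda_{i}\le|\lambda|$. Thus $p_{1}\in S_{\varepsilon}^{m'}$ for a suitable $m'\in\mathbb{R}$.

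Now the standard parametrix conclusion: by the spectral theorem, as in Proposition \ref{lem:Symboliccalculus} (which needs no restriction on symbol classes), $p_{1}(-\Delta)\circ p(-\Delta)=(p_{1}p)(-\Delta)=I+R$, where $R=(p_{1}p-1)(-\Delta)$ has a smooth, compactly supported symbol and is therefore infinitely smoothing, just as in the proof of Theorem \ref{thm:elliptic_hyppoelliptic}; in particular $Ru$ is smooth for every distribution $u$. Since $\frac{1}{\gamma+1}<\varepsilon\le1$ and $p_{1}\in S_{\varepsilon}^{m'}$, the product version of Theorem \ref{thm:smooth_kernel-1} (Remark \ref{Note-product}) shows that $p_{1}(-\Delta)$ has a kernel smooth off the diagonal, hence satisfies the pseudo-local property. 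Consequently, if $p(-\Delta)u$ is smooth on an open set $\Omega$, then $u=p_{1}(-\Delta)\bigl(p(-\Delta)u\bigr)-Ru$ is smooth on $\Omega$ — the first term by pseudo-locality of $p_{1}(-\Delta)$, the second everywhere — so $\singsupp(u)\subseteq\singsupp(p(-\Delta)u)$. Together with the first paragraph this gives $\singsupp(p(-\Delta)u)=\singsupp(u)$, i.e. $p(-\Delta)$ is hypoelliptic.

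The step I expect to be the main obstacle is showing that the parametrix symbol $p_{1}$ genuinely belongs to a $\rho$-class with $\rho=\varepsilon$: this needs both the two-sided polynomial bound on $|p|$ (so that $1/|p|$ costs only a polynomial factor) and careful bookkeeping of the Fa\`a di Bruno combinatorics so that each use of \eqref{eq:Hormander} contributes its full decay and the total gain is $|\lambda|^{-\varepsilon|\alpha|}$. Once this is in place the requirement $\varepsilon>\frac{1}{\gamma+1}$ is exactly the hypothesis of Theorem \ref{thm:smooth_kernel-1}, and the remainder is the routine parametrix argument of the elliptic case. (In the genuine product setting one reads $\varepsilon>\frac{1}{\gamma_{i}+1}$ for each fibre index $i$.)
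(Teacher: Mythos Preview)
Your argument is essentially the paper's: build the parametrix $q=\varphi/p$ via a smooth cutoff, use the quotient-rule expansion together with \eqref{eq:Hormander} to place $q$ in an $S_{\varepsilon}$ class, and then invoke the parametrix identity $q(-\Delta)p(-\Delta)=I+R$ with $R$ smoothing and the smooth-kernel result for $\rho$-type operators (Theorem \ref{thm:smooth_kernel-1}, product version via Remark \ref{Note-product}). You supply details the paper leaves implicit --- the forward pseudo-local inclusion for $p(-\Delta)$, the symbol order of the parametrix, and the two-sided polynomial bounds on $|p|$; in particular the paper's bare assertion that $q\in S_{\varepsilon}^{0}$ already presupposes a uniform lower bound $|p(\lambda)|\ge c$ for large $|\lambda|$, which is exactly the hypothesis you make explicit.
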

\begin{proof}
We know from \eqref{eq:Hormander} that the set of zeros of $p$ is
compact. Let $\varphi$ be a smooth function that is $0$ on a neighborhood
of the zeros of $p$ and $\varphi(\lambda)=1$ if $\vert\lambda\vert\ge A$.
Set $q(\lambda)=\varphi(\lambda)p(\lambda)^{-1}$. Then
\[
\frac{\partial^{\alpha}}{\partial\lambda^{\alpha}}q(\lambda)=\sum_{\alpha_{1}+\cdots+\alpha_{\mu}}\frac{\partial^{\alpha_{1}}}{\partial\lambda^{\alpha_{1}}}p(\lambda)\cdots\frac{\partial^{\alpha_{\mu}}}{\partial\lambda^{\alpha_{\mu}}}p(\lambda)\cdot p(\lambda)^{1-\mu},
\]
if $\vert\lambda\vert\ge A$. It follows that $q\in S_{\varepsilon}^{0}$.
Since $q(\lambda)p(\lambda)=1$ if $\vert\lambda\vert\ge A$ we have
that
\[
q(-\Delta)p(-\Delta)=u+Ru,
\]
where $R$ is an infinitely smoothing operator. Theorem \ref{thm:smooth_kernel-1}
implies that $p(-\Delta)$ is hypoelliptic.
\end{proof}

\section{\label{sec:wavefront}Wavefront set and microlocal analysis}

In this section we initiate the study of the microlocal analysis on
fractals. Namely, we define the wavefront sets on fractals and provide
a few concrete examples. As in classical harmonic analysis, the
wavefront sets contain information about the singularities of distributions.
We show that a pseudo-differential operator reduces, in general, the
wavefront set. If the operator is elliptic, then the wavefront set
remains unchanged. This is just a beginning of this study. We expect
that the wavefront set will play an important role in the future,
but we can not expect results analogous to classical analysis on propagation
of singularities for space-time equations being controlled by the
wavefront set, as there are localized eigenfunctions which prevent
singularities from propagating at all. We continue to assume the set-up
and notations of Section \ref{sec:Elliptic-and-Hypoelliptic}. Namely,
$X$ is the product of $N$ fractafolds without boundary, $\mu$ is
the product measure on $X$, and $\Delta=(\Delta_{1},\dots,\Delta_{N})$.
In addition, we assume in this section that each fractafold $X_{i}$
is \emph{compact. }
\begin{defn}
Let $\Gamma$ denote an open cone in $\mathbb{R}_{+}^{N}$ and $\Omega$
an open set in $X$.  We use $\varphi_{\alpha_{k}}$ to denote $L^{2}$ normalized eigenfunctions corresponding
to eigenvalues $\lambda_{\alpha_{k}}$, and set $\lambda_{\alpha}=\lambda_{\alpha_{1}}+\dots+\lambda_{\alpha_{N}}$.
A distribution $u$ is defined to be $C^{\infty}$ in
$\Omega\times\Gamma$ if it can be written on $\Omega$ as a linear combination of eigenfunctions with coefficients having faster than polynomial decay over the eigenvalues in $\Gamma$.   More precisely, if there is a sequence $b_{n}$ and a function $v$ with $v|_{\Omega}=u$ that has the form
\begin{equation}
v=\sum_{\alpha}c_{\alpha}\varphi_{\alpha_{1}}\otimes\varphi_{\alpha_{2}}\otimes\dots\otimes\varphi_{\alpha_{N}},\label{eq:Cinfty}
\end{equation}
for values $c_{\alpha}$ such that $\vert c_{\alpha}\vert\le b_{n}(1+\lambda_{\alpha})^{-n/(d+1)}$ 
for all $n$ and all $\{\lambda_{\alpha_{1}},\dots,\lambda_{\alpha_{N}}\}\in\Gamma$. We define the \emph{wavefront set }of
$u$, \global\long\def\WF{\operatorname{WF}}
 $\WF(u)$, to be the complement of the union of all sets where $u$
is $C^{\infty}$.\end{defn}
\begin{rem}
If $u$ is a smooth function on $X$ then $\WF(u)$ is empty. More
generally, $\singsupp u$ is the projection of $\WF(u)$ onto $X$.
\end{rem}

\begin{rem}
If the fractals $X_{i}$ have gaps in the set of ratios of eigenvalues,
then there are special cones $\Gamma$ for which every $u$ is $C^{\infty}$
on $X\times\Gamma$$ $ because $\Gamma$ contains no eigenvalues
(see Subsection \ref{sub:Quasielliptic-operators}).\end{rem}
\begin{prop}
If $p\in S^{m}$ then $\WF p(-\Delta)u\subseteq\WF(u)$. If in addition
$p(-\Delta)$ is elliptic then $\WF(p(-\Delta)u)=\WF(u)$.\end{prop}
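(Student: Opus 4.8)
The plan is to transfer to the microlocal level the argument already used for the pseudo-local property (Theorem \ref{pro:pseudo-local}). The key observation is that $p(-\Delta)$ acts diagonally on eigenfunction expansions: if $w=\sum_{\alpha}c_{\alpha}\,\varphi_{\alpha_{1}}\otimes\dots\otimes\varphi_{\alpha_{N}}$ then $p(-\Delta)w=\sum_{\alpha}p(\lambda_{\alpha_{1}},\dots,\lambda_{\alpha_{N}})\,c_{\alpha}\,\varphi_{\alpha_{1}}\otimes\dots\otimes\varphi_{\alpha_{N}}$. Since $p\in S^{m}$, the symbol estimate with $\alpha=0$ gives $|p(\lambda)|\lesssim(1+|\lambda|)^{m/(d+1)}$, and because $|\lambda|=(\lambda_{\alpha_{1}}^{2}+\dots+\lambda_{\alpha_{N}}^{2})^{1/2}$ and $\lambda_{\alpha}=\lambda_{\alpha_{1}}+\dots+\lambda_{\alpha_{N}}$ are comparable on $\mathbb{R}_{+}^{N}$, a bound $|c_{\alpha}|\le b_{n}(1+\lambda_{\alpha})^{-n/(d+1)}$ valid for $\{\lambda_{\alpha_{1}},\dots,\lambda_{\alpha_{N}}\}\in\Gamma$ is turned into $|p(\lambda)c_{\alpha}|\lesssim b_{n}(1+\lambda_{\alpha})^{(|m|-n)/(d+1)}$, which again decays faster than any polynomial over $\Gamma$ because $n$ is arbitrary (the coefficients outside $\Gamma$ remain unconstrained, as the definition allows). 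Hence $p(-\Delta)$ preserves the property of being $C^{\infty}$ in $\Omega\times\Gamma$ \emph{for functions already written in the form \eqref{eq:Cinfty}}.

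Next I would reduce the general case to this one. Fix $(x_{0},\xi_{0})\notin\WF(u)$, so there are a neighborhood $\Omega$ of $x_{0}$, an open cone $\Gamma\ni\xi_{0}$, and a function $v$ of the form \eqref{eq:Cinfty} with coefficients rapidly decaying over $\Gamma$ and $v|_{\Omega}=u|_{\Omega}$. Write $u=v+w$ with $w=u-v$ a distribution supported in $X\setminus\Omega$, so $p(-\Delta)u=p(-\Delta)v+p(-\Delta)w$. The first summand is $C^{\infty}$ in $X\times\Gamma$ by the observation above. For the second, take any finite union of cells $K\subseteq\Omega$ with interior $\Omega'$: the argument of Theorem \ref{pro:pseudo-local} — expand $w$ via the structure theorem \cite[Theorem 5.10]{RogStr-2009_distr} into a locally finite sum of terms $\Delta^{m_{j}}\nu_{j}$ and $\Delta^{m_{j}+1}f_{j}$ with supports in $X\setminus\Omega$, pair against $\phi\in\D(\Omega')$, and use that the kernel $K_{p}$ of $p(-\Delta)$ from Theorem \ref{thm:smooth_ker_mul} is smooth off the diagonal while $\Omega'$ is separated from those supports — shows $p(-\Delta)w$ agrees on $\Omega'$ with a smooth function; extending it to a global smooth function via \cite[Theorem 7.7]{RogStr-2009_distr} and noting that a globally smooth function has coefficients decaying rapidly over the whole spectrum, $p(-\Delta)w$ is $C^{\infty}$ in $\Omega'\times\Gamma$. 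Adding the two pieces, $p(-\Delta)u$ is $C^{\infty}$ in $\Omega'\times\Gamma$; running this over all such $\Omega'$, which exhaust $\Omega$, and recalling that $\WF$ is the complement of the union of all open sets on which the distribution is $C^{\infty}$, we get $(x_{0},\xi_{0})\notin\WF(p(-\Delta)u)$, i.e.\ $\WF(p(-\Delta)u)\subseteq\WF(u)$. The genuine obstacle is exactly the non-locality of $p(-\Delta)$: the identity $u=v$ on $\Omega$ does not pass through the operator, and repairing this is precisely the role of the off-diagonal smoothness of $K_{p}$; so the real content is recycling Theorem \ref{pro:pseudo-local} rather than proving something new.

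Finally, for the elliptic case I would use the parametrix from the proof of Theorem \ref{thm:elliptic_hyppoelliptic}: there is $p_{1}\in S^{-m}$ with $p_{1}(\lambda)p(\lambda)=1$ for $|\lambda|\ge A$, and by the symbolic calculus (Proposition \ref{lem:Symboliccalculus} and its product analog) $p_{1}(-\Delta)p(-\Delta)=I+R$ with $R$ infinitely smoothing, so $Ru$ is a smooth function on $X$ and $\WF(Ru)=\emptyset$. Then $u=p_{1}(-\Delta)\bigl(p(-\Delta)u\bigr)-Ru$; applying the inclusion just proved to the symbol $p_{1}\in S^{-m}$ gives $\WF\bigl(p_{1}(-\Delta)(p(-\Delta)u)\bigr)\subseteq\WF(p(-\Delta)u)$, and since $\WF(a+b)\subseteq\WF(a)\cup\WF(b)$ (intersect the neighborhoods and cones and add the representing functions) we conclude $\WF(u)\subseteq\WF(p(-\Delta)u)$. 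Together with the first part this yields $\WF(p(-\Delta)u)=\WF(u)$.
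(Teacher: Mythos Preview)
Your argument is correct and, at the skeletal level, matches the paper's: act on the eigenfunction witness $v$ by $p(-\Delta)$ term by term to obtain rapid decay of the coefficients over $\Gamma$, and for the elliptic direction invoke the parametrix $p_{1}(-\Delta)p(-\Delta)=I+R$ from Theorem~\ref{thm:elliptic_hyppoelliptic}. The difference lies at the point the paper records in one line as ``$Pv|_{\Omega}=Pu$'': you correctly observe that $p(-\Delta)$ is non-local, so the equality $v|_{\Omega}=u|_{\Omega}$ does not literally pass through the operator, and you repair this by writing $u=v+w$ with $w$ supported in $X\setminus\Omega$, then recycling the off-diagonal kernel smoothness from Theorem~\ref{pro:pseudo-local} (structure theorem plus smoothness of $K_{p}$) to show that $p(-\Delta)w$ is smooth on any $\Omega'$ whose closure lies in $\Omega$, hence---after the Borel-type extension---microlocally trivial there. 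So your route is not a genuinely different strategy but a careful completion of the paper's terse step; it costs importing the pseudo-local machinery and the extension theorem, but supplies exactly the justification that equality needs. The elliptic half, including the use of $\WF(a+b)\subseteq\WF(a)\cup\WF(b)$, is identical in both proofs.
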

\begin{proof}
Let $u$ be a distribution on $X$ and suppose that $\Omega$ is an
open subset of $X$ and $\Gamma$ is an open cone in $\mathbb{R}_{+}^{N}$
such that $u$ is $C^{\infty}$ in $\Omega\times\Gamma$. Let $v$
be as in \eqref{eq:Cinfty}. Then $Pv|_{\Omega}=Pu$ and
\[
Pv=\sum_{\alpha}c_{\alpha}p(\lambda_{\alpha_{1}},\dots,\lambda_{\alpha_{N}})\varphi_{\alpha_{1}}\otimes\dots\otimes\varphi_{\alpha_{N}},
\]
with $\vert c_{\alpha}p(\lambda_{\alpha_{1}},\dots,\lambda_{\alpha_{N}})\vert\le b_{n}(1+\lambda_{\alpha})^{-(n-m)/(d+1)}$
for all $\{\lambda_{\alpha_{1}},\dots,\lambda_{\alpha_{N}}\}\in\Gamma$.
Thus $Pu$ is $C^{\infty}$ in $\Omega\times\Gamma$ and, since $\Omega$
and $\Gamma$ were arbitrary, $\WF(Pu)\subseteq\WF(u)$.

If $P$ is elliptic then we can write, as in the proof of Theorem
\ref{thm:elliptic_hyppoelliptic}, $u=P_{1}Pu+Ru$, where $P_{1}$
is a pseudo-differential operator of order $-m$ and $R$ is a smoothing
operator. The conclusion follows immediately from this.\end{proof}
\begin{example}
Let $N=2$ and let $u=u_{1}\otimes u_{2}$ be a tensor product of distributions.
From the definition $u$ is $C^{\infty}$ on $\bigl((\singsupp u_{1})^{c}\times(\singsupp u_{2})^{c}\bigr)\times\mathbb{R}_{2}^{+}$,
thus $\WF(u)$ is a subset of the complement of this set. It is also
easy to check that
\[
\bigl(\singsupp(u_{1})\times\singsupp(u_{2})\bigr)\times\mathbb{R}_{2}^{+}\subseteq\WF(u).
\]
Let $x_{1}\in X_{1}$ and $x_{2}\in X_{2}$ and suppose that $u_{1}$
is smooth near $x_{1}$ but $x_{2}\in\singsupp(u_{2})$. Then
there is a smooth function $v_{1}$ on $X_{1}$ and a neighborhood
$\Omega_{1}$ of $x_{1}$ such that $u_{1}=v_{1}$ on $\Omega_{1}$.
Moreover we can write $v_{1}=\sum_{j}c_{j}\varphi_{j}$ with the Fourier
coefficients $c_{j}$ having faster decay then any polynomial (\cite[Theorem 3.5]{RogStr-2009_distr}).
Therefore for any $n\in\mathbb{N}$ there is $b_{n,j}>0$ such that
\[
\vert c_{j}\vert\le b_{n,j}(1+\lambda_{j})^{-n}.
\]
Also $u_{2}=\sum_{k}c_{k}^{\prime}\varphi_{k}$, where the coefficients
$c_{k}^{\prime}=\langle u_{2},\varphi_{k}\rangle$ have at most polynomial
growth (\cite[Lemma 4.4]{RogStr-2009_distr}). Thus for every $k\in\mathbb{N}$
there is $m_{k}\in\mathbb{N}$ and $b_{k}^{\prime}>0$ such that
\[
\vert c_{k}^{\prime}\vert\le b_{k}^{\prime}(1+\lambda_{k})^{m_{k}}.
\]
Define
\[
v=v_{1}\otimes u_{2}=\sum c_{j}c_{k}^{\prime}\varphi_{j}\otimes\varphi_{k}
\]
and let $\Gamma$ be a cone in $\mathbb{R}_{+}^{2}\setminus\{y-\mbox{axis}\}$.
Then there is $M>0$ such that for all $(\lambda^{\prime},\lambda^{\prime\prime})\in\Gamma$
we have that $\vert\lambda^{\prime\prime}\vert\le M\vert\lambda^{\prime}\vert$.
Therefore
\begin{eqnarray*}
\vert c_{j}c_{k}^{\prime}\vert & \le & b_{n,j}(1+\lambda_{j})^{-n}b_{k}^{\prime}(1+\lambda_{k})^{m_{k}}\\
 & \le & M^{m_{k}}b_{n,j}b_{k}^{\prime}(1+\lambda_{j})^{-n+m_{k}}\le C(M,j,k)(1+\lambda_{j}+\lambda_{k})^{-n+m_{k}},
\end{eqnarray*}
and thus $u$ is $C^{\infty}$ on $(\Omega_{1}\times X_{2})\times\Gamma$.

Similarly, one can show that if $x_{1}\in\singsupp u_{1}$ and
$u_{2}$ is smooth near $x_{2}$, then $u$ is $C^{\infty}$ on $(X_{1}\times(\singsupp(u_{2}))^{c})\times\Gamma$
for all cones $\Gamma$ in $\mathbb{R}_{+}^{2}\setminus\{x-\mbox{axis}\}$.
Therefore
\begin{eqnarray*}
\WF(u) & = & \bigl(\singsupp(u_{1})\times\singsupp(u_{2})\bigr)\times\mathbb{R}_{+}^{2}\\
 & \bigcup & \bigl((\singsupp(u_{1}))^{c}\times\singsupp(u_{2})\bigr)\times\{y-\mbox{axis}\}\\
 & \bigcup & \bigl(\singsupp(u_{1})\times(\singsupp(u_{2}))^{c}\bigr)\times\{x-\mbox{axis}\}.
\end{eqnarray*}

\end{example}

\begin{example}
We continue to assume that $N=2$. Let $x=(x_{1},x_{2})\in X$ and
let $\{(j,k)\}$ be a sequence such that the maps $\varphi_{j}\otimes\varphi_{k}$
are supported in a decreasing sequence of neighborhoods of $x$. Let $\{c_{jk}\}$
be any sequence of real numbers and define $u=\sum c_{jk}\varphi_{j}\otimes\varphi_{k}$.
Let $\Omega$ be an open subset of $X$ such that there is an open
neighborhood $V$ of $x$ so that $\Omega\bigcap V=\emptyset$. Then
there are only a finite number of indices $(j,k)$ with the property
that $\supp\varphi_{j}\otimes\varphi_{k}\bigcap\Omega\ne\emptyset$.
It follows that $u$ is $C^{\infty}$ on $\Omega\times\mathbb{R}_{+}^{2}$.
Since $\Omega$ was arbitrary, it follows that $\WF(u)\subseteq\{x\}\times\mathbb{R}_{+}^{2}$.
Let $\Gamma$ be a cone in $\mathbb{R}_{+}^{2}$. Define $c_{jk}=1$
if $(\lambda_{j},\lambda_{k})\in\Gamma$ and set $c_{jk}$ to be zero
otherwise. Then for $u=\sum c_{jk}\varphi_{j}\otimes\varphi_{k}$
we have that $\WF(u)=\{x\}\times\Gamma$.

More generally, let $K$ be any compact subset of $X^{2}$. Let $\{(j,k)\}$
be a sequence such that $\varphi_{j}\otimes\varphi_{k}$ is supported
in a decreasing sequence of neighborhoods of $K$. Then if $u=\sum c_{jk}\varphi_{j}\otimes\varphi_{k}$
with $\{c_{jk}\}$ an arbitrary sequence, we have that $\WF(u)\subseteq K\times\mathbb{R}_{+}^{2}$.
If $\Gamma$ is a cone in $\mathbb{R}_{+}^{2}$ and we define $c_{jk}$
as before, we obtain that $\WF(u)=K\times\mathbb{R}_{+}^{2}$.
\end{example}

\section{Pseudo-differential operators with variable coefficients }

The symbols of the operators studied in the previous section are independent
of the $x$ variable. These symbols are also known as \emph{constant}
coefficient symbols, and the corresponding operators are the constant
coefficient operators. In this section we extend our study to operators
whose symbols depend on both the $\lambda$ and $x$ variables. We
call them pseudo-differential operators with \emph{variable coefficients.}
The main difficulty in studying these operators is the fact that for
a large class of fractals the domain of the Laplacian is not closed
under multiplication \cite{BassStrTep_JFA99}. This implies in our
case that the symbolic calculus is not valid for the pseudo-differential
operators with variable coefficients. Namely, the product of two symbols
in the sense of Definition \ref{def:symbol_var_coeff} is no longer
smooth and it can not be a symbol of a pseudo-differential operator.
Another consequence of this fact is that the kernel of these operators
cannot be smooth. Nevertheless, we conjecture that these are Calder\'on-Zygmund
operators in the sense of \cite{Ste93,IR_2010}. We prove, in fact,
that the kernels of these operators are continuous off diagonal and
satisfy the correct decay off the diagonal. Moreover, we show that
these operators are bounded on $L^{q}(\mu)$ for all $1<q<\infty$.

We can define the operators if $X$ is either a compact fractafold
without boundary based on a self-similar nested fractal $K$, or an
infinite blow-up of $K$ without boundary. However we prove the
main properties in the case when $X$ is compact. We continue to write
$\D$ for the set of finite linear combinations of eigenvalues of
$-\Delta$ with compact support and $\Lambda$ for the spectrum of
$-\Delta$. We also assume that the heat kernel satisfies the upper estimate \eqref{eq:heat_estimates}.
\begin{defn}
\label{def:symbol_var_coeff}For $m\in\mathbb{R}$ we define the symbol
class $S^{m}$ to consist of the smooth functions $p:X\times(0,\infty)\to\mathbb{C}$
such that for each $k\in\mathbb{N}$ and $j\in\mathbb{N}$ there is
a positive constant $C_{j,k}$ such that
\begin{equation}
\left\vert \left(\lambda\frac{\partial}{\partial\lambda}\right)^{k}\Delta_{x}^{j}p(x,\lambda)\right\vert \le C_{j,k}(1+\lambda)^{\frac{m}{d+1}}.\label{eq:def_symbol_varcoef}
\end{equation}

\end{defn}
For $\lambda\in\Lambda$ we set $P_{\lambda}$ to be the corresponding
spectral projection.
\begin{defn}
We define the operator class of pseudo-differential operators with
variable coefficients $\Psi DO_{m}$ by
\[
p(x,-\Delta)u(x)=\sum_{\lambda\in\Lambda}\int p(x,\lambda)P_{\lambda}(x,y)u(y)dy
\]
for $p\in S^{m}$ and $u\in\mathcal{D}$.
\end{defn}
In the remainder of this section we will assume that $X$ is a \emph{compact}
fractafold without boundary. Recall that the set of eigenvalues is
countable and the only accumulation point is $\infty$. Let $\Lambda=\{\lambda_{1}\le\lambda_{2}\le\dots\}$
be the set of eigenvalues of $-\Delta$ in increasing order and with
repeated multiplicity. Let $\{\phi_{k}\}_{k\in\mathbb{N}}$ be an
orthonormal basis of $L^{2}(\mu)$ consisting of compactly supported
eigenfunctions.
\begin{thm}
Suppose that $X$ is a compact fractafold with no boundary and $p\in S^{0}$.
Then the operator $Tu(x)=p(x,-\Delta)u(x)$ extends to a bounded operator
on $L^{2}(\mu)$.\end{thm}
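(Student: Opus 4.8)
The plan is to decompose the symbol dyadically in the $\lambda$ variable and sum the resulting pieces using an almost-orthogonality (Cotlar--Stein type) argument, but organized in a way that only uses the $L^2$-theory already available from Theorem~\ref{thm:smooth_kernel}. First I would write $p(x,\lambda)=\sum_{n\in\mathbb{Z}}p_n(x,\lambda)$ with $p_n(x,\lambda)=p(x,\lambda)\delta(2^{-n}\lambda)$, exactly as in the proof of Theorem~\ref{thm:smooth_kernel}, so that $\supp_\lambda p_n\subseteq[2^{n-1},2^{n+1}]$ and $\bigl|\Delta_x^j(\lambda\partial_\lambda)^k p_n(x,\lambda)\bigr|\le \overline{C}_{j,k}$ uniformly in $n$. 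Write $T_n$ for the operator with symbol $p_n$, so $T=\sum_n T_n$ on $\mathcal{D}$. For a fixed $n$, the key point is that $T_n u(x)=\sum_{\lambda\in\Lambda}p_n(x,\lambda)P_\lambda u(x)$, and since $p_n(\cdot,\lambda)$ is a smooth function on $X$ whose Laplacian-derivatives are bounded uniformly in $\lambda$ on the dyadic block, $T_n$ maps $L^2$ to $L^2$ with a norm bounded uniformly in $n$; this is essentially the single-block estimate $\|T_n\|_{L^2\to L^2}\le C$, which one gets by expanding $p_n(x,\lambda)=\sum_\ell \langle p_n(\cdot,\lambda),\phi_\ell\rangle\phi_\ell(x)$ and using that the coefficients decay rapidly in $\lambda_\ell$ by the smoothness hypothesis \eqref{eq:def_symbol_varcoef}, combined with Cauchy--Schwarz and the fact that only $O(1)$ dyadic values of the spectral parameter are active.

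Next I would establish the almost-orthogonality: $T_n^* T_m=0$ and $T_n T_m^*$ is small unless $|n-m|$ is bounded. The first is immediate from the spectral supports: $T_m^* $ has symbol supported in $[2^{m-1},2^{m+1}]$ in $\lambda$, so $P_\lambda$-blocks of $T_n^*T_m$ vanish unless the two dyadic blocks overlap, i.e. $|n-m|\le 1$; hence $\|T_n^*T_m\|=0$ for $|n-m|\ge2$. For $T_nT_m^*$ the spectral supports do not help directly because the multiplication by $p_n(x,\cdot)$ in between mixes frequencies, so here one must use the smoothness in $x$: writing the kernel of $T_nT_m^*$ and integrating by parts against powers of $\Delta_x$ (equivalently, exploiting that $p_n(\cdot,\lambda)\in \dom\Delta_x^N$ for all $N$ with uniform bounds) produces a gain of $2^{-\epsilon|n-m|}$ for a suitable $\epsilon>0$. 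This is the step I expect to be the main obstacle, precisely because the domain of $\Delta$ is not an algebra --- so one cannot simply multiply bump functions --- and one must instead use the Borel-type decomposition of \cite{RoStrTe_08} to split $p_n(\cdot,\lambda)$ into smooth pieces adapted to the cells and track the interaction with the heat-kernel off-diagonal decay \eqref{eq:heat_estimates}. The summability $\sum_k \sqrt{\|T_n^* T_{n+k}\|}<\infty$ and $\sum_k\sqrt{\|T_n T_{n+k}^*\|}<\infty$ then follows from these two estimates.

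Finally I would invoke the Cotlar--Stein lemma: from the uniform single-block bound and the square-summable almost-orthogonality estimates, $T=\sum_n T_n$ converges in the strong operator topology on $L^2(\mu)$ to a bounded operator, with $\|T\|_{L^2\to L^2}\le C$ depending only on the constants $C_{j,k}$ in \eqref{eq:def_symbol_varcoef}. Since $\mathcal{D}$ is dense in $L^2(\mu)$ and $Tu$ is defined there, this gives the desired bounded extension. As an alternative route that avoids Cotlar--Stein entirely, one could instead note that by Theorem~\ref{thm:smooth_kernel} the operator $q(x,-\Delta)$ with $q$ the ``frozen-coefficient'' symbol $\lambda\mapsto p(x_0,\lambda)$ has an $L^2$-bounded kernel with uniform constants, and then bound $T$ by a Schur-test argument on the kernel $K(x,y)=\sum_\lambda p(x,\lambda)P_\lambda(x,y)$ using the off-diagonal decay $|K(x,y)|\lesssim R(x,y)^{-d}$ together with the doubling condition \eqref{eq:double}; the smoothness of $p$ in $x$ is what guarantees that this kernel inherits the size and regularity estimates of the constant-coefficient case. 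I would present the Cotlar--Stein argument as the main line, since it is the most robust and makes the role of hypothesis \eqref{eq:def_symbol_varcoef} transparent, and I would remark that the continuity and off-diagonal decay of $K(x,y)$ (to be proved in the subsequent results of this section) give the other ingredients needed for the conjectured Calder\'on--Zygmund property.
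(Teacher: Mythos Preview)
Your Cotlar--Stein strategy is quite different from the paper's argument, and while it could ultimately be made to work, the proposal contains two concrete errors. First, your anticipated ``main obstacle'' is illusory: $T_nT_m^*$ also vanishes for $|n-m|\ge 2$. Indeed $T_m^*v=\sum_{\lambda}P_\lambda\bigl(\overline{p_m(\cdot,\lambda)}\,v\bigr)$ lies in the span of the eigenspaces with $\lambda\in[2^{m-1},2^{m+1}]$, so $T_m^*=Q_mT_m^*$ for the corresponding spectral projection $Q_m$; combined with $T_n=T_nQ_n$ this gives $T_nT_m^*=T_nQ_nQ_mT_m^*=0$ once the bands are disjoint. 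No integration by parts, no Borel-type decomposition, and no difficulty with the non-algebra structure of $\dom\Delta$ enters here. Second, your alternative Schur-test route fails outright: the bound $|K(x,y)|\lesssim R(x,y)^{-d}$ is exactly the critical exponent, so $\int|K(x,y)|\,d\mu(y)=\infty$ and the Schur test does not apply. (The remark that ``only $O(1)$ dyadic values of the spectral parameter are active'' is also false---the number of eigenvalues in $[2^{n-1},2^{n+1}]$ grows with $n$ by Weyl asymptotics---though your eigenfunction-expansion idea for the single-block bound would still work without it.)

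The paper bypasses all of this with a short device adapted from \cite{Str_TAMS69}: introduce a dummy variable and set $\widetilde{T}u(x,z)=\sum_\lambda p(z,\lambda)\,P_\lambda u(x)$, so $Tu(x)=\widetilde{T}u(x,x)$. For each fixed $z$ this is a \emph{constant}-coefficient multiplier with bounded symbol, hence $L^2$-bounded uniformly in $z$ by the spectral theorem. One then estimates $|Tu(x)|\le\sup_z|\widetilde{T}u(x,z)|$ and applies the Sobolev embedding in the $z$-variable to dominate the supremum by $\|(I-\Delta_z)^{s}\widetilde{T}u(\cdot,x)\|_{L^2_z}$ for suitable $s$. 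Since $(I-\Delta_z)^{s}p(z,\lambda)$ is again uniformly bounded in $\lambda$ by \eqref{eq:def_symbol_varcoef}, integrating in $x$ and then in $z$ over the compact $X$ finishes the proof in a few lines---no dyadic decomposition, no almost-orthogonality, and no kernel estimates are needed.
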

\begin{proof}
Define
\[
\widetilde{T}u(x,z)=\sum_{\lambda\in\Lambda}\int p(z,\lambda)P_{\lambda}(x,y)u(y)dy.
\]
Then
\[
\vert Tu(x)\vert=\vert\widetilde{T}u(x,x)\vert\le\sup_{z}\vert\widetilde{T}u(z,x)\vert\le\Vert(I-\Delta_{z})^{\frac{d}{2(d+1)}}\widetilde{T}u(\cdot,x)\Vert_{2},
\]
where the last inequality is the Sobolev inequality. So $\Vert Tu\Vert_{2}^{2}\le\Vert(I-\Delta_{z})\widetilde{T}u\Vert_{2}^{2}$.
If $p\in S^{0}$ then $\Delta_{z}p\in L^{\infty}$ because $X$ is
compact. Then the result follows. \end{proof}
\begin{rem}
The method of proof for the above theorem is adapted from \cite{Str_TAMS69}.
\end{rem}
Let $K(x,y)$ be the distributional kernel of $T$. Recall that we
can not expect that $K$ is smooth off the diagonal, because the domain
of the Laplacian is not, in general, closed under multiplication.
The best result one can expect is that $K$ is continuous off diagonal.
We prove that this is true in the following theorem.
\begin{thm}
\label{thm:continuous_ker}Suppose that $X$ is a compact fractafold
with no boundary and $p\in S^{0}$. Then the operator $Tu(x)=p(x,-\Delta)u(x)$
is given by integration with a kernel that is continuous off the diagonal
and satisfies the estimate
\begin{equation}
\vert K(x,y)\vert\lesssim R(x,y)^{-d}.\label{eq:est_ker_var_coeff}
\end{equation}
\end{thm}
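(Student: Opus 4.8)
The plan is to freeze the $x$-dependence of the symbol, invoke the constant-coefficient Theorem~\ref{thm:smooth_kernel}, and then reintroduce the variable by restricting to a diagonal. Concretely, consider
\[
\widetilde{T}u(x,z)=\sum_{\lambda\in\Lambda}\int p(z,\lambda)\,P_{\lambda}(x,y)\,u(y)\,d\mu(y),
\]
the operator already used in the preceding boundedness theorem. For each fixed $z\in X$ the map $u\mapsto\widetilde{T}u(\cdot,z)$ is the constant-coefficient pseudo-differential operator $p(z,-\Delta)$ attached to the single-variable symbol $\lambda\mapsto p(z,\lambda)$, and by \eqref{eq:def_symbol_varcoef} taken with $j=0$ this symbol lies in $S^{0}$ with constants \emph{independent of} $z$. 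Thus Theorem~\ref{thm:smooth_kernel} provides a kernel $K_{z}(x,y)$, smooth off the diagonal in $(x,y)$, with $\vert K_{z}(x,y)\vert\lesssim R(x,y)^{-d}$ uniformly in $z$. One then \emph{defines} $K(x,y):=K_{x}(x,y)$, the restriction of $(x,y,z)\mapsto K_{z}(x,y)$ to $z=x$, and the theorem splits into two claims: (i) $K$ is the distributional kernel of $T$ and satisfies \eqref{eq:est_ker_var_coeff}; (ii) $(x,y,z)\mapsto K_{z}(x,y)$ is continuous off $\{x=y\}$. It should be emphasized that continuity is the best possible outcome: to differentiate $K_{x}(x,y)$ in the diagonal variable $x$ one would have to apply $\Delta$ to a product of an eigenfunction with $p(\cdot,\lambda)$, which in general leaves $\operatorname{dom}\Delta$ (\cite{BassStrTep_JFA99}); this is exactly where the argument of Theorem~\ref{thm:smooth_kernel} breaks down.

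For (i) I would rerun the proof of Theorem~\ref{thm:smooth_kernel} with $z$ carried along as an inert parameter. With $\delta$ the Littlewood--Paley bump, set $p_{n}(z,\lambda)=p(z,\lambda)\delta(2^{-n}\lambda)$ and $f_{n,z}(\lambda)=p_{n}(z,2^{n}\lambda)e^{\lambda}$; since only the $\lambda$-derivative bounds of \eqref{eq:def_symbol_varcoef} are used and these are uniform in $z$, the estimates producing \eqref{eq:fourier_bounds} give $\vert\widehat{f}_{n,z}(\xi)\vert\le\overline{D}_{k}(1+\xi^{2})^{-k}$ with $\overline{D}_{k}$ independent of $n$ \emph{and} $z$. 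Fourier inversion identifies the kernel of $p_{n}(z,-\Delta)$ as
\[
K_{n}(x,z,y)=\frac{1}{2\pi}\int\widehat{f}_{n,z}(\xi)\,h_{\frac{1}{2^{n}}-i\frac{\xi}{2^{n}}}(x,y)\,d\xi ,
\]
and combining \eqref{eq:heat_ker_est2} with the Fourier bound yields precisely the summand handled by Lemma~\ref{lem:ineq} with $\alpha=d$. Hence for $j$ large $\sum_{n}K_{n}(x,z,y)=K_{z}(x,y)$ converges with $\vert K_{z}(x,y)\vert\lesssim R(x,y)^{-d}$ uniformly in $z$, which is \eqref{eq:est_ker_var_coeff} after setting $z=x$. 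To see $K$ is the kernel of $T$: for $u\in\mathcal{D}$ and $x\notin\operatorname{supp}u$, one has $\widetilde{T}u(x,z)=\sum_{n}p_{n}(z,-\Delta)u(x)=\sum_{n}\int K_{n}(x,z,y)u(y)\,d\mu(y)$, where the interchange of $\sum_{n}$ with $\int$ is legitimate because $\sum_{n}K_{n}(x,\,\cdot\,,y)$ converges uniformly for $y$ with $R(x,y)\ge\operatorname{dist}(x,\operatorname{supp}u)>0$; setting $z=x$ and using $Tu(x)=\widetilde{T}u(x,x)$ gives $Tu(x)=\int K(x,y)u(y)\,d\mu(y)$ for such $x$ and $u$.

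For (ii) I would argue termwise plus locally uniform summation. For fixed $n$, the map $(x,z,y)\mapsto K_{n}(x,z,y)$ is continuous on $X\times X\times X$: by dominated convergence (dominant $\overline{D}_{2}(1+\xi^{2})^{-2}\,C2^{nd/(d+1)}$, integrable in $\xi$) it is enough that $z\mapsto\widehat{f}_{n,z}(\xi)$ be continuous --- immediate from the joint continuity of $p$ on $X\times(0,\infty)$ --- and that $(x,y)\mapsto h_{w}(x,y)$ be continuous for $\operatorname{Re}w>0$, which holds on a compact fractafold. The summand estimate from Lemma~\ref{lem:ineq} shows $\sum_{n}K_{n}$ converges uniformly on $\{R(x,y)\ge\varepsilon\}\times X$ for every $\varepsilon>0$, so the sum $(x,y,z)\mapsto K_{z}(x,y)$ is continuous off $\{x=y\}$; restricting to $z=x$ shows $K$ is continuous off the diagonal.

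The computations involved --- the Leibniz bookkeeping for $\widehat{f}_{n,z}$ and the invocation of Lemma~\ref{lem:ineq} --- are copied essentially verbatim from Theorem~\ref{thm:smooth_kernel}, so the only point that genuinely needs care (and the reason this is not literally Theorem~\ref{thm:smooth_kernel}) is keeping every constant uniform in the frozen variable $z$: this works because the $z$-dependence of the symbol resides entirely in the $\Delta_{x}$-derivatives, which never appear in the estimates of Theorem~\ref{thm:smooth_kernel}, whereas the $\lambda$-derivative bounds \eqref{eq:def_symbol_varcoef} hold uniformly in $z$. The real obstruction to a stronger conclusion is, as noted, the failure of $\operatorname{dom}\Delta$ to be closed under multiplication, which is precisely why one cannot promote ``continuous off the diagonal'' to ``smooth off the diagonal.''
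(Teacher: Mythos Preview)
Your argument is correct and takes a genuinely different route from the paper's proof. The paper expands the symbol in the $x$-variable along the eigenfunction basis, writing $p(x,\lambda)=\sum_{k}m_{k}(\lambda)\phi_{k}(x)$ with $m_{k}(\lambda)=\langle p(\cdot,\lambda),\phi_{k}\rangle$, and then uses the $\Delta_{x}^{j}$-bounds in \eqref{eq:def_symbol_varcoef} together with the Gauss--Green formula to show that $\lambda_{k}^{n}m_{k}(\lambda)$ is a constant-coefficient $0$-symbol with constants \emph{independent of $k$}; applying Theorem~\ref{thm:smooth_kernel} to each mode and invoking the Weyl-type bound $\Vert\phi_{k}\Vert_{\infty}\lesssim\lambda_{k}^{\alpha}$ then yields $K(x,y)=\sum_{k}\lambda_{k}^{-n}\phi_{k}(x)K_{k,n}(x,y)$ as an absolutely and uniformly convergent series off the diagonal. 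Your freezing-and-restricting approach is more elementary: it bypasses the eigenfunction expansion entirely, uses only the $j=0$ case of \eqref{eq:def_symbol_varcoef} (plus the joint continuity of $p$), and therefore actually proves the theorem under a weaker hypothesis on the symbol. What the paper's approach buys in exchange is an explicit series representation of $K$ in which the $x$-dependence is carried by eigenfunctions and the $(x,y)$-dependence by constant-coefficient kernels; this decomposition is what the authors reuse in the subsequent $L^{q}$-boundedness theorem, and it is the natural starting point for their conjecture that $T$ is Calder\'on--Zygmund, since it isolates exactly where the multiplicative structure of $\operatorname{dom}\Delta$ would be needed.
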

\begin{proof}
Since the map $x\mapsto p(x,\lambda)$ is smooth for all $\lambda\in\mathbb{R}_{+}$,
it follows that we can write
\begin{equation}
p(x,\lambda)=\sum_{k=0}^{\infty}m_{k}(\lambda)\phi_{k}(x),\label{eq:p_sumofconstant}
\end{equation}
where $m_{k}(\lambda)=\langle p(\cdot,\lambda),\phi_{k}\rangle$,
for all $\lambda\in\mathbb{R}_{+}$. We claim that $\widetilde{m}_{k,n}(\lambda):=\lambda_{k}^{n}m_{k}(\lambda)$
is a $0$-symbol with \emph{constant} coefficients, for all $k,n\in\mathbb{N}$.
For $j\ge0$ we have
\begin{eqnarray*}
\lambda^{j}\frac{d^{j}\widetilde{m}_{k,n}(\lambda)}{d\lambda^{j}} & = & \int\lambda^{j}\frac{\partial^{j}}{\partial\lambda^{j}}p(x,\lambda)\lambda_{k}^{n}\phi_{k}(x)d\mu(x)\\
 & = & \int\lambda^{j}\frac{\partial^{j}}{\partial\lambda^{j}}p(x,\lambda)(-\Delta)^{n}\phi_{k}(x)d\mu(x)\\
 & = & \int\lambda^{j}(-\Delta)^{n}\frac{\partial^{j}}{\partial\lambda^{j}}p(x,\lambda)\phi_{k}(x)d\mu(x),
\end{eqnarray*}
using the Gauss-Green formula. The estimates \eqref{eq:def_symbol_varcoef}
imply that
\begin{equation}
\left\vert \lambda^{j}\frac{d^{j}\widetilde{m}_{k,n}(\lambda)}{d\lambda^{j}}\right\vert \le C_{j,n}\int\vert\phi_{k}(x)\vert d\mu(x)\le C_{j,n}\mu(X)^{1/2}.\label{eq:m_tilde_is_symbol}
\end{equation}
Thus $\widetilde{m}_{k,n}(\lambda)$ is a $0$-symbol. Notice that
the constants in \eqref{eq:m_tilde_is_symbol} are \emph{independent}
of $k$. Now Theorem 4.5.4 of \cite{Kig_CUP01} (and the comments following
it) implies that there is a positive constant $c$ and a real number
$\alpha$ such that
\begin{equation}
\Vert\phi_{k}\Vert_{\infty}\le c\lambda_{k}^{\alpha}\;\mbox{for all}\; k\ge0.\label{eq:boundon_phik}
\end{equation}
Let $n\ge0$ be such that the series $S:=\sum_{k}\lambda_{k}^{\alpha-n}$
converges. For $u\in\D$ we can write then
\begin{eqnarray*}
Tu(x) & = & \sum_{j}\sum_{k}m_{k}(\lambda_{j})\phi_{k}(x)P_{\lambda_{j}}u(x)\\
 & = & \sum_{k}\frac{\phi_{k}(x)}{\lambda_{k}^{n}}\sum_{j}\widetilde{m}_{k,n}(\lambda_{j})P_{\lambda_{j}}u(x)\\
 & = & \sum_{k}\frac{\phi_{k}(x)}{\lambda_{k}^{n}}\widetilde{m}_{k,n}(-\Delta)u(x),
\end{eqnarray*}
where $\widetilde{m}_{k,n}(-\Delta)$ is the pseudo-differential operator
of order $0$ attached to $\widetilde{m}_{k,n}$. Theorem \ref{thm:smooth_kernel}
and estimates \eqref{eq:m_tilde_is_symbol} imply that $\widetilde{m}_{k,n}(-\Delta)$
is given by integration with respect to a kernel $K_{k,n}(x,y)$ that
is continuous (and smooth) off diagonal and satisfies the estimate
\begin{equation}
\vert K_{k,n}(x,y)\vert\le C\cdot R(x,y)^{-d},\label{eq:K_kn}
\end{equation}
where $C$ is a positive constant that is independent of $k$. Then
\[
Tu(x)=\int\sum_{k}\frac{\phi_{k}(x)}{\lambda_{k}^{n}}K_{k,n}(x,y)u(y)d\mu(y).
\]
Thus it suffices to prove that
\[
K(x,y)=\sum_{k}\frac{\phi_{k}(x)}{\lambda_{k}^{n}}K_{k,n}(x,y)
\]
is well defined and continuous. Estimates \eqref{eq:K_kn} imply that
\[
\vert K(x,y)\vert\le C\cdot R(x,y)^{-d}\sum_{k}\frac{\lambda_{k}^{\alpha}}{\lambda_{k}^{n}}=CS\cdot R(x,y)^{-d}.
\]
Thus $K(x,y)$ is continuous off diagonal and \eqref{eq:est_ker_var_coeff}
holds.
\end{proof}
We conjecture that the kernel $K$ of a pseudo-differential operator
$p(x,-\Delta)$ of order $0$ is a Calder\'on-Zygmund operator in
the sense of \cite[Section I.6.5]{Ste93}. Even though we are unable
at this point to prove this claim, we can, nevertheless, show that
these operators are bounded on $L^{q}(\mu)$, for all $1<q<\infty$.
\begin{thm}
Suppose that $X$ is a compact fractafold with no boundary and $p\in S^{0}$.
Then the operator $Tu(x)=p(x,-\Delta)u(x)$ extends to a bounded operator
on $L^{q}(\mu)$ for all $1<q<\infty$.\end{thm}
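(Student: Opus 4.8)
The strategy is to reduce the $L^{q}$-boundedness to the $L^{2}$-boundedness (already established) together with the off-diagonal kernel estimate \eqref{eq:est_ker_var_coeff} of Theorem \ref{thm:continuous_ker}, via the $T(1)$-type machinery for Calder\'on--Zygmund operators. Concretely, I would proceed as follows. First, recall the decomposition $Tu(x)=\sum_{k}\lambda_{k}^{-n}\phi_{k}(x)\,\widetilde{m}_{k,n}(-\Delta)u(x)$ from the proof of Theorem \ref{thm:continuous_ker}, where $n$ is chosen so that $S=\sum_{k}\lambda_{k}^{\alpha-n}<\infty$ and each $\widetilde{m}_{k,n}\in S^{0}$ with symbol bounds \emph{uniform} in $k$. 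By Corollary \ref{pro:Prop_boundedness}, each constant-coefficient operator $\widetilde{m}_{k,n}(-\Delta)$ is a Calder\'on--Zygmund operator, hence bounded on $L^{q}(\mu)$ for $1<q<\infty$, with operator norm controlled by the uniform symbol constants $C_{j,n}\mu(X)^{1/2}$ and therefore independent of $k$. Since $\|\phi_{k}\|_{\infty}\le c\lambda_{k}^{\alpha}$ by \eqref{eq:boundon_phik}, multiplication by $\lambda_{k}^{-n}\phi_{k}(x)$ is bounded on $L^{q}(\mu)$ with norm $\lesssim\lambda_{k}^{\alpha-n}$. Summing, $\|Tu\|_{q}\le\sum_{k}\lambda_{k}^{-n}\|\phi_{k}\|_{\infty}\|\widetilde{m}_{k,n}(-\Delta)u\|_{q}\lesssim\bigl(\sum_{k}\lambda_{k}^{\alpha-n}\bigr)\|u\|_{q}=CS\,\|u\|_{q}$, which gives the claim directly.

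The main point to check carefully is that the series defining $Tu$ converges in $L^{q}$ (not merely pointwise off the diagonal), so that the triangle-inequality estimate above is legitimate; this follows from absolute convergence of $\sum_{k}\lambda_{k}^{-n}\|\phi_{k}\|_{\infty}\|\widetilde{m}_{k,n}(-\Delta)\|_{L^{q}\to L^{q}}$, which is exactly $CS<\infty$. One should also confirm that for $u\in\D$ the identity $Tu=\sum_{k}\lambda_{k}^{-n}\phi_{k}\,\widetilde{m}_{k,n}(-\Delta)u$ holds as claimed in Theorem \ref{thm:continuous_ker}; this is a matter of interchanging the (finite, since $u\in\D$) sum over eigenvalues $\lambda_{j}$ with the sum over $k$, using $m_{k}(\lambda_{j})=\langle p(\cdot,\lambda_{j}),\phi_{k}\rangle$ and smoothness of $x\mapsto p(x,\lambda_{j})$, and then passing from $\D$ to all of $L^{q}(\mu)$ by density once the uniform bound is in hand.

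The expected main obstacle is verifying that the $L^{q}$-operator-norm bound for $\widetilde{m}_{k,n}(-\Delta)$ is genuinely uniform in $k$. This requires inspecting the Calder\'on--Zygmund constants produced by Theorem \ref{thm:smooth_kernel} and Theorem 1.1 of \cite{IR_2010}: the kernel bounds \eqref{eq:est1} and \eqref{eq:est2} for $\widetilde{m}_{k,n}(-\Delta)$ depend only on the symbol constants $C_{j,n}\mu(X)^{1/2}$ from \eqref{eq:m_tilde_is_symbol}, which are independent of $k$, and the $L^{2}$-bound is $\|\widetilde{m}_{k,n}\|_{\infty}\le C_{0,n}\mu(X)^{1/2}$, again independent of $k$; hence the Calder\'on--Zygmund (and thus $L^{q}$) constants are uniform. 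An alternative, if one prefers to avoid tracking constants through the Calder\'on--Zygmund theorem, is to interpolate: $T$ is bounded on $L^{2}$ by the previous theorem, and a dual argument handles $q$ near $\infty$, but the summation argument above is cleaner and self-contained given the results already proved in the paper.
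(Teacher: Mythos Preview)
Your proposal is correct and follows the same strategy as the paper: both use the eigenfunction expansion $Tu(x)=\sum_{k}\lambda_{k}^{-n}\phi_{k}(x)\,\widetilde{m}_{k,n}(-\Delta)u(x)$ from the proof of Theorem~\ref{thm:continuous_ker}, invoke Corollary~\ref{pro:Prop_boundedness} to get $L^{q}$-bounds for the constant-coefficient pieces $\widetilde{m}_{k,n}(-\Delta)$ that are uniform in $k$ (via the uniform symbol estimates~\eqref{eq:m_tilde_is_symbol}), and then sum. The only difference is in the summation step: you apply the triangle inequality directly in $L^{q}$, obtaining $\Vert Tu\Vert_{q}\lesssim\bigl(\sum_{k}\lambda_{k}^{\alpha-n}\bigr)\Vert u\Vert_{q}$, whereas the paper splits $\lambda_{k}^{-n}=\lambda_{k}^{-n/2}\cdot\lambda_{k}^{-n/2}$ and applies H\"older's inequality in the $k$-sum pointwise before integrating, which requires choosing $n$ so that two auxiliary series $S_{1}=\sum_{k}\lambda_{k}^{r(\alpha-n/2)}$ and $S_{2}=\sum_{k}\lambda_{k}^{-nq/2}$ both converge. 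Your version is cleaner and needs only the single series $S=\sum_{k}\lambda_{k}^{\alpha-n}$ already used in Theorem~\ref{thm:continuous_ker}; the paper's H\"older detour buys nothing extra here, since the uniform operator-norm bound on $\widetilde{m}_{k,n}(-\Delta)$ is available either way.
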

\begin{proof}
Fix $1<q<\infty$ and let $r$ such that $1/q+1/r=1$. Let $u\in\D$
and let $\widetilde{m}_{k,n}$ as in the proof of Theorem \ref{thm:continuous_ker}.
Recall that $\widetilde{m}_{k,n}$ is a symbol of order $0$ with
constant coefficients, and
\[
Tu(x)=\sum_{k}\frac{\phi_{k}(x)}{\lambda_{k}^{n}}\widetilde{m}_{k,n}(-\Delta)u(x),
\]
for all $n\ge0$. Recall from inequality \eqref{eq:boundon_phik} that
there is $\alpha$ such that $\Vert\phi_{k}\Vert_{\infty}\le c\lambda_{k}^{\alpha}$
for some positive constant $c$. Let $n\ge0$ such that the series
$S_{1}=\sum_{k}\lambda_{k}^{r(\alpha-n/2)}$ and $S_{2}=\sum_{k}\lambda_{k}^{-nq/2}$
are convergent. Estimates  (\ref{eq:m_tilde_is_symbol}) and Corollary
\ref{pro:Prop_boundedness} imply that  $\widetilde{m}_{k,n}(-\Delta)$  is a
bounded operator on $L^{q}(\mu)$ with the bound \emph{independent} on $k$. Then, H\"older inequality implies
that
\begin{eqnarray*}
\Vert Tu\Vert_{q}^{q} & = & \int\left\vert \sum_{k}\frac{\phi_{k}(x)}{\lambda_{k}^{n}}\widetilde{m}_{k,n}(-\Delta)u(x)\right\vert ^{q}d\mu(x)\\
 & \le & \int\left(\sum_{k}\left\vert \frac{\phi_{k}(x)}{\lambda_{k}^{n/2}}\frac{\widetilde{m}_{k,n}(-\Delta)u(x)}{\lambda_{k}^{n/2}}\right\vert \right)^{q}d\mu(x)\\
 & \le & \int\left(\sum_{k}\frac{\vert\phi_{k}(x)\vert^{r}}{\lambda_{k}^{nr/2}}\right)^{\frac{q}{r}}\cdot\left(\sum_{k}\frac{\vert\widetilde{m}_{k,n}(-\Delta)u(x)\vert^{q}}{\lambda_{k}^{nq/2}}\right)d\mu(x)\\
 & \le & c^{q}\int\left(\sum_{k}\lambda_{k}^{r(\alpha-n/2)}\right)^{\frac{q}{r}}\sum_{k}\frac{\vert\widetilde{m}_{k,n}(-\Delta)u(x)\vert^{q}}{\lambda_{k}^{nq/2}}d\mu(x)\\
 & = & c^{q}S_{1}\sum_{k}\frac{1}{\lambda_{k}^{nq/2}}\int\vert\widetilde{m}_{k,n}(-\Delta)u(x)\vert^{q}d\mu(x)\\
 & \le & C(n)S_{1}S_{2}\Vert u\Vert_{q}^{q}.
\end{eqnarray*}
Thus $T$ extends to a bounded operator on $L^{q}(\mu)$.
\end{proof}

\bibliographystyle{amsplain}
\bibliography{bibliography}

\end{document}